\documentclass[12pt]{amsart}
\usepackage[english]{babel}
\usepackage{amsmath}
\usepackage{amsthm}
\usepackage{amssymb}
\usepackage{mathrsfs}
\usepackage{enumerate}
\usepackage[notcite, final, notref]{showkeys}
\usepackage{amsfonts}
\usepackage{dsfont}
\usepackage{float}
\usepackage{tikz}
\usepackage[siunitx]{circuitikz}
\usetikzlibrary{arrows,calc,chains,shapes,dsp}
\def\C{\mathbb C}
\def\R{{\mathbb R}}
\def\T{\hfill{\triangledown\triangledown\triangledown}}
\newtheorem{Pa}{Paper}[section]
\newtheorem{Tm}[Pa]{{\bf Theorem}}
\newtheorem{La}[Pa]{{\bf Lemma}}

\newtheorem{Cy}[Pa]{{\bf Corollary}}
\newtheorem{Rk}[Pa]{{\bf Remark}}
\newtheorem{Pn}[Pa]{{\bf Proposition}}

\newtheorem{Ex}[Pa]{{\bf Example}}
\newtheorem{Dn}[Pa]{{\bf Definition}}
\usepackage{xcolor}

\title[Quantitatively Hyper-Positive Real Functions III]
{Quantitatively Hyper-Positive Real Rational Functions III}

\author[D. Alpay]{Daniel Alpay}
\address{(DA)
Faculty of Mathematics, Physics, and Computation\\
Schmidt College of Science and Technology\\
Chapman University\\
One University Drive
Orange, California 92866\\
USA}
\email{alpay@chapman.edu}
\thanks{Daniel Alpay thanks the Foster G. and Mary McGraw Professorship in
Mathematical Sciences, which supported this research.}

\author[I. Lewkowicz]{Izchak Lewkowicz}
\address{(IL) School of Electrical and Computer Engineering\\
Ben-Gurion University of the Negev\\ P.O.B. 653\\ Beer-Sheva, 84105\\
Israel}
\email{izchak@bgu.ac.il}

\begin{document}
\bibliographystyle{plain}

\begin{abstract}
Hyper-Positive Real, matrix-valued, rational functions are associated with absolute stability (the Lurie
problem). Here, quantitative subsets of Hyper-positive functions, related through nested inclusions, are
introduced. Structurally, this family of functions turns out to be matrix-convex and closed under
inversion. A state-space characterization of these functions through a corresponding Kalman-Yakubovich-Popov
Lemma, is given. Technically, the classical Linear Matrix Inclusions, associated with passive systems, are
here substituted by Quadratic Matrix Inclusions.
\end{abstract}
\maketitle

\noindent AMS Classification:
15A63
34H05
47A63
47N70
93B20
93C15

\noindent {\em Key words}:
absolute stability,
electrical circuits,
invertible disks,
hyper-positive real functions,
Kalman-Yakubovich-Popov lemma,
matrix-convex set,
positive real functions,
state-space realization, 
balanced truncation
\date{today}
\tableofcontents

\section{Introduction and Main Results}
\setcounter{equation}{0}

Passivity is a fundamental property in the study of dynamical systems.
Thus, it has been extensively addressed in various frameworks.
Although passivity is essentially a {\em physical} property, it recently turned
out that at least in the framework of finite-dimensional, linear,
time-invariant systems, passivity can be characterized by {\em structural}
properties of a {\em whole family}, see \cite{Lewk2021a}, \cite{Lewk2020c} and \cite{Lewk2024a}.
Specifically, the set is matrix-convex, closed under inversion (or product among elements) and
in addition, maximal in some rigorous sense.
\smallskip

In this work we proceed on the path started in \cite{AlpayLew2021} and \cite{AlpayLew2024}
and further elaborated on the above observation while focusing on {\em strictly
dissipative}\begin{footnote}{Roughly speaking, where damping is
everywhere.}\end{footnote} systems, a proper subset of the passive ones. 

{\sl
\begin{quote}
It is emphasized that restricting the discussion from passive to dissipative systems, in the
physical sense, from mathematical structure point-of-view means that we parametrize
sub-families where, in a rigorous sense, one is ``more stable" than the other and in
particular, maximality is compromized.
\end{quote}
}

A word about notation:~ ${\C}_L$, $\overline{\C}_L$ and ${\C}_R$, $\overline{\C}_R~$ will be the open (closed)
 Left and Right halves of the complex plane, respectively.\\
For (constant) matrices, we denote by ($\mathbf{H}_q$), $\overline{\mathbf H}_q$ the set of $q\times q$
(non-singular) Hermitian (a.k.a self-adjoint). We shall say that $\overline{\mathbf H}_q$ is the closure of
the open set $\mathbf{H}_q~$. Skew-Hermitian matrices are denoted by $~i\overline{\mathbf H}_q~$. It is common
to consider $\overline{\mathbf H}_q$ and $i\overline{\mathbf H}_q$ as the matricial extensions of $\R$ and
$i\R$, respectively.
\smallskip

In a similar way, ($\overline{\mathbf P}_q$) and $\mathbf{P}_q$ will be the (closed) open subsets of positive
(semi)-definite matrices\begin{footnote}{$H\in\mathbf{P}_q~~ \Longleftrightarrow~~x^*Hx>0,~\forall~
\not=x\in\C^q$.}\end{footnote}. Sometimes, when dimensions are clear from the context we shall omit the
subscript and simply write $\mathbf{H}$ or $\mathbf{P}$.
\smallskip

For a single matrix, we sometimes adopt the short-hand notation, ($P\succcurlyeq 0$) $P\succ 0$ to indicate
($P\in\overline{\mathbf P}$) $P\in\mathbf{P}$. So far now for notation.
\smallskip

We here address matrix-valued real rational functions of a complex variable $s$.

\begin{Rk}
{\rm
To ease the physical interpretation, throughout this work we focus on {\em real rational}\begin{footnote}
{Physically, they model {\em lumped} systems, while {\em irrational} functions allow for {\em distributed}
systems. See e.g. \cite{Wohl1969}, \cite{ze1}, \cite{ze2}.}\end{footnote} functions $F(s)$. Yet, many of the
results apply to functions which are not necessarily rational, or with complex coefficients.
$\T$
}
\end{Rk}

The set $\mathcal{P}$, of positive real functions, defined below, classically serves as a model for passive,
continuous-time, linear, time-invariant systems, see e.g. \cite[Theorem 2.7.1]{AnderVongpa1973},
\cite[Section 3.18]{Belev1968}, \cite[Corollary 2.39]{BroLozaMasEge2020},
\cite[Section 6.3]{Khalil2000}, \cite[Proposition 1]{Will1976} and \cite[Theorem 11]{YoulCastCarl1959}.

\begin{Dn}\label{Dn:Positive}
{\rm 
We shall call a $m\times m$-valued rational function $F(s)$, {\em Positive}, (Real) i.e.
\mbox{$F\in\mathcal{P}$,} if $\forall s\in\C_R~$,  $F(s)$ is analytic and\begin{footnote}{The symbol
$\triangledown\triangledown\triangledown$ indicates the end of a definition, an example or a remark.
The symbol $\square$ indicates the end of a proof.}\end{footnote},
\begin{equation}\label{eq:Def_P}
F(s)+(F(s))^*\in\overline{\mathbf P}_m~.
\end{equation}
$\T$
}
\end{Dn}

Here is the first structural result.

\begin{Tm}\label{Tm:Set_Of_P_Functions}

{\bf a.}~
\cite[Proposition 4.1.1]{CohenLew2007}.
The set of $m\times m$-valued positive functions is a cone of real rational functions
and a maximal convex set of functions, closed under inversion, which are analytic in $\C_R~$.
\smallskip

{\bf b.}~
\cite[Theorem 5.3]{Lewk2021a}. The set of $\mathcal{P}$ functions, of all dimensions, is a maximal
matrix-convex cone of matrix-valued real rational functions, closed under inversion, which are analytic in
$\C_R~$.
\smallskip

Conversely, a maximal matrix-convex cone of
matrix-valued rational functions (of various dimensions), closed under inversion,
which are analytic in $\C_R~$ and containing the zero degree
function \mbox{$F_o(s)\equiv I_m$}, is the set $\mathcal{P}$.
\end{Tm}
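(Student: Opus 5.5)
Both parts split into closure properties, which are direct computations with the defining inequality \eqref{eq:Def_P}, and a maximality claim, which is the real content. I would prove the closure properties first.

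\emph{Cone and convexity.} If $F_1,F_2\in\mathcal P$ and $\alpha,\beta\ge 0$, then $\alpha F_1+\beta F_2$ is analytic in $\C_R$. Its Hermitian part is $\alpha(F_1+F_1^*)+\beta(F_2+F_2^*)\in\overline{\mathbf P}_m$. Hence $\mathcal P$ is a convex cone.

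\emph{Matrix-convexity.} Take $F_j\in\mathcal P$ of sizes $m_j$ and constant matrices $V_j\in\C^{m_j\times m}$ with $\sum_j V_j^*V_j=I_m$; the $V_j$ may be taken real so that real-rationality is kept. Then $\sum_j V_j^*F_jV_j$ is analytic in $\C_R$, and its Hermitian part is $\sum_j V_j^*(F_j+F_j^*)V_j\succcurlyeq 0$.

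\emph{Inversion.} Suppose $F\in\mathcal P$ and $\det F\not\equiv 0$. Fix $s\in\C_R$ and assume $F(s)x=0$ for some $x\neq 0$. Then $x^*(F+F^*)x=0$, so $x$ lies in the kernel of the positive semidefinite matrix $F+F^*$, hence also in $\ker F^*(s)$. This alone does not give a contradiction pointwise. Instead I would use a maximum-principle argument. For $\epsilon>0$ set $F_\epsilon=F+\epsilon I$; then $F_\epsilon+F_\epsilon^*\succ 0$, so $F_\epsilon(s)$ is invertible. Writing $y=F_\epsilon x$,
\[
F_\epsilon^{-1}+F_\epsilon^{-*}=F_\epsilon^{-1}\,(F_\epsilon^*+F_\epsilon)\,F_\epsilon^{-*}\succ 0 .
\]
Letting $\epsilon\to 0$ gives $F^{-1}+F^{-*}\succcurlyeq 0$ wherever $F^{-1}$ exists. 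A rational function whose real part is bounded below on $\C_R$ minus a finite set cannot have poles there, because near a pole the real part takes arbitrarily negative values. So $F^{-1}$ is analytic in $\C_R$ and $F^{-1}\in\mathcal P$.

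\emph{Maximality, part a.} Suppose $\mathcal C\supsetneq\mathcal P$ is convex, closed under inversion, and consists of functions analytic in $\C_R$. Pick $G\in\mathcal C\setminus\mathcal P$. Then there are $s_0\in\C_R$ and a unit vector $x$ with $\mathrm{Re}\,x^*G(s_0)x<0$. The plan is to combine $G$ with suitable elements of $\mathcal P$ and produce an element of $\mathcal C$ with a pole in $\C_R$.
\begin{enumerate}[1.]
\item Convexity gives $G_t=tG+(1-t)P\in\mathcal C$ for suitable $P\in\mathcal P$, for instance constant skew-symmetric plus positive-semidefinite matrices, or the functions $sI$ and $1/s$.
\item Arrange that for some $t\in(0,1)$, $\det G_t(s_0)=0$ at a point of $\C_R$, while $G_t\not\equiv$ singular.
\item Then $G_t^{-1}\in\mathcal C$ has a pole at $s_0\in\C_R$, contradicting analyticity.
\end{enumerate}
For step 2 the natural choice is $P=cI$ (or $P=cxx^*$ plus a real skew term). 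Along the segment the quantity $x^*G_t(s_0)x$ moves from $\mathrm{Re}<0$ to $\mathrm{Re}>0$, and I would add a constant skew-Hermitian (real-skew) element of $\mathcal P$ to kill the imaginary part. In the scalar case this is immediate: $tG(s_0)+(1-t)c+i\beta=0$ is solvable, with $i\beta$ replaced by the positive real function $\beta s$ or $\beta/s$ chosen with the right sign at $s_0$. In the matrix case one reduces to this scalar situation by compressing along $x$ and using real-rational constraints.

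\emph{Part b and the converse.} For part b, matrix-convexity lets us compress a counterexample $G$ with $V=x$ to a scalar function, so the scalar argument above applies. For the converse, any maximal family containing $I_m$ with these closure properties lies inside some set for which the previous argument forces every member to be positive real; maximality of $\mathcal P$ then gives equality. The main obstacle is step 2: producing an exact zero of the determinant at a point of the open half-plane using only convex combinations with positive real functions. The care lies in choosing real-rational auxiliary functions, such as $\beta s$ or $\beta/s$, so that both real and imaginary parts vanish simultaneously.
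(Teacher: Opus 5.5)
The paper does not prove this theorem; it only cites \cite{CohenLew2007} and \cite{Lewk2021a}, so your proposal has to stand on its own. The closure properties are fine. For inversion you do not even need the $\epsilon$-regularization, since $F^{-1}+F^{-*}=F^{-1}(F+F^*)F^{-*}$ wherever $F(s)$ is invertible.

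The gap is maximality, and your Step 2 is not merely unfinished: its mechanism fails. You correctly noticed that $i\beta$ is unavailable for real rational functions, but the replacements $\beta s$ and $\beta/s$ do not work. Values at $s_0\in\C_R$ of real positive real functions lie in the sector $|\arg w|\le|\arg s_0|$; this follows from the $2\times 2$ Pick matrix at $s_0,\bar s_0$, or from the classical bound $|\arg F(s)|\le|\arg s|$. So adding $\beta s$ or $\beta/s$ always brings a proportional real part along with the imaginary part. Hence, at a point with $\mathrm{Re}\,g(s_0)<0$, there is in general no $p\in\mathcal P$ with $tg(s_0)+(1-t)p(s_0)=0$. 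For example, take $g(s)=s^2$ and $s_0=e^{i\varphi}$ with $\pi/4<\varphi<\pi/3$: then $\mathrm{Re}\,g(s_0)<0$, but $|\arg(-g(s_0))|=\pi-2\varphi>\varphi$. So the scalar case is not ``immediate''.

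The missing idea is to use closure under inversion before the final step. Put $w=g(s_0)$ and consider the convex combination $a\,g+a|w|^2g^{-1}+(1-a-a|w|^2)c$, with $a>0$ small and a suitable constant $c>0$. It lies in $\mathcal C$, and it vanishes at $s_0$ because $w+|w|^2/w=2\,\mathrm{Re}\,w$. It is not identically zero unless $g$ is constant; in that case use $c+s$ instead of $c$. This closes the scalar case and, via the compression $g=x^*Gx$, the maximality claim in part b.

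It does not close part a. There $\mathcal C$ is only assumed convex, so $x^*Gx$ is not an element of $\mathcal C$ and compression is unavailable. You still need an $m\times m$ construction of an element of $\mathcal C$ that is singular at a point of $\C_R$. Your requirement that $G_t$ not be identically singular is also never verified.

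For the converse, your sketch is circular. The earlier argument takes its auxiliary functions from $\mathcal P\subseteq\mathcal C$, but here you only know $I_m\in\mathcal C$ (hence all positive semidefinite constants), and you never show $\mathcal C\subseteq\mathcal P$. In fact, with only $F_o\equiv I_m$ assumed, no argument can succeed. The family of all constant matrices of all sizes is a matrix-convex cone, closed under inversion, analytic in $\C_R$, and contains $I_m$. It is also maximal: if a non-constant $G$ were adjoined, then $G-G(s_0)+E$ would belong to the family, where $s_0>0$ and $E$ is a singular constant with $\det(G-G(s_0)+E)\not\equiv 0$. Its inverse would then have a pole at $s_0$.

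So some extra hypothesis from the cited source must enter, for instance $sI_m\in\mathcal C$. With it, compress to $x^*Gx$ and apply the inversion trick above, using $s$ in place of $c$ when $x^*Gx$ is constant. This gives $\mathcal C\subseteq\mathcal P$, and maximality of $\mathcal C$ then yields $\mathcal C=\mathcal P$.
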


As already mentioned, this work focuses on the subset of {\em dissipative} systems.
To formalize it, we need the following.

\begin{Dn}\label{Dn:Hyper_Positive}
{\rm Let \mbox{${\color{blue}T}$} be a matricial parameter so that\begin{footnote}{Meaning
$(I_m-{\color{blue}T})\in\mathbf{P}_m$ and
${\color{blue}T}\in\overline{\mathbf P}_m~$.}\end{footnote}
\[
I_m\succ{\color{blue}T}\succcurlyeq 0.
\]
We shall call a $m\times m$-valued rational function $F(s)$,
$\begin{smallmatrix}{\color{blue}T}\end{smallmatrix}$-{\em Hyper-Positive}, i.e.
\mbox{$F\in\mathcal{HP}_{\color{blue}T}$,} if\begin{footnote}{To ease the reading, when next
to functions \mbox{${\color{blue}T}$,} is written in a smaller font.}\end{footnote}
$\forall s\in\C_R~$, $F(s)$ is analytic and,
\begin{equation}\label{eq:HP_Delta}
F(s)+(F(s))^*\succcurlyeq\begin{smallmatrix}{\color{blue}T}\end{smallmatrix}+(F(s))^*
\begin{smallmatrix}{\color{blue}T}\end{smallmatrix}F(s).
\end{equation}
$\T$
}
\end{Dn}

\begin{Rk}\label{Rk:Supplay_Rate}
{\rm
{\bf a.}~ 
For perspective consider the following state-space system 
\[
\dot{x}=f(x, u)
\quad\quad
y=g(x, u)
\quad{\rm where}\quad\begin{smallmatrix}x\in\R^n\\~\\ u, y\in\R^m.\end{smallmatrix}
\]
where $u(t)$ and $y(t)$ are {\em input} and {\em output}, respectively. Recall that passivity
of a system, is often described by some inner product, denoted by \mbox{$\langle u, y\rangle$.}
See e.g.  \cite{ze1,ze2}. Following J.C. Willems, this quantity is referred to as the ``supply
rate" of energy from the (vector-valued) input $u$ to the (vector-valued) output $y$, see e.g.
\cite[Definition 2, Subsection 7.1]{Will1972a}, \cite[Eq. (2)]{HillMoyl1976}. For non-linear
systems account, see e.g. \cite[Section 2.2]{van_der_Schaft2017}. And for a recent extension,
see \cite{MorBinAstPar2024}.
\smallskip

To describe the framework here, consider for {\em real} vector-valued $u(t)$ and $y(t)$, the
following inner-product
relation\begin{footnote}{As the vectors $u$ and $y$ are real, here $(~)^*$ simply means the
transpose.}\end{footnote},
\begin{equation}\label{eq:Passivity_Dissipativity}
\int_0^t(y(\tau))^*u(\tau)d\tau\geq{\scriptstyle\gamma}+\int_0^t(u(\tau))^*{\color{blue}T}
u(\tau)d\tau+\int_0^t(y(\tau))^*{\color{blue}T}y(\tau)d\tau\quad\quad t\geq 0,
\end{equation}
where ${\color{blue}T}$, \mbox{$I_m\succ{\color{blue}T}\succcurlyeq 0$} and
\mbox{${\scriptstyle\gamma}\in(-\infty,~0]$,} are parameters. The integral in Eq.
\eqref{eq:Passivity_Dissipativity} can be replaced by an integral on the whole real line by multiplying $u$ by
a set characteristic function. Then Plancherel's identity
(we denote by $u(s)$, $y(s)$, where $s$
is a complex variable, the Laplace transform of $u(\tau)$, $y(\tau)$, where $\tau$ is real),
along with the relation $y(s)=F(s)u(s)$, will lead to 
\[
\int_{\mathbb R}(u(s))^*\left(~(F(s))^*+F(s)-T-(F(s))^*TF(s)~\right)u(s)ds\geq{\scriptstyle\gamma}.
\]
For ${\scriptstyle\gamma}=0$ and since $u$ is arbitrary, we get  \eqref{eq:HP_Delta} on the
real line. But $F$ has a real positive part in $\C_R$ and so one can use the maximum modulus principle (via
using the Cayley transform to reduce to bounded functions) to extend this inequality to  all of $\C_R~$.
\smallskip

Systems satisfying Eq. \eqref{eq:Passivity_Dissipativity} shall be here called :~ {\em dissipative}, 
when \mbox{${\color{blue}T}\not=0$}, and if \mbox{${\color{blue}T}=0$} (i.e. Eq. \eqref{eq:Def_P})
they will be referred to as {\em passive}.
\smallskip

In parts of the literature, see e.g. \cite[Definition 2.1, Theorem 2.81]{BroLozaMasEge2020}
\cite[Definition 3]{HillMoyl1976}, systems satisfying Eq. \eqref{eq:Passivity_Dissipativity} are called
``Very Strictly (or Strongly) Passive". Sometimes, functions satusfying Eq. \eqref{eq:HP_Delta} are
called ``Strong Strictly Positive Real" (SSPR), see e.g. \cite[Definition 2.78, Remark
3.17]{BroLozaMasEge2020}. In \cite[Theorem 2.81]{BroLozaMasEge2020} it is shown that for linear,
time-invariant systems, having the Very Strictly Passive property is equivalent to being SSPR.\\
A somewhat similar result appeared in \cite[Lemma 2.3]{SunKharShim1994}, where the family under
focus was called ``Extended Strictly Positive Real" (ESPR).
\bigskip

{\bf b.}~ 
Strictly speaking, Eq. \eqref{eq:HP_Delta} describes {\em Right}~
$\mathcal{HP}_{\color{blue}T}$ functions. {\em Left}~ 
$\mathcal{HP}_{\color{blue}T}$ functions are given by
matrix-valued functions $F(s)$ which
$\forall s\in\C_R~$,  $F(s)$ are analytic and,
\[
F(s)+(F(s))^*\succcurlyeq\begin{smallmatrix}{\color{blue}T}\end{smallmatrix}+F(s)
\begin{smallmatrix}{\color{blue}T}\end{smallmatrix}(F(s))^*.
\]
The two classes may be different, already in the setting of constant functions;
say take $F(s)\equiv \begin{smallmatrix}\frac{1}{4}\end{smallmatrix}
\left(\begin{smallmatrix}1&&1\\~\\0&&3\end{smallmatrix}\right)$ and
${\color{blue}T}=\begin{smallmatrix}\frac{1}{5}\end{smallmatrix}
\left(\begin{smallmatrix}2&&0\\~\\0&&3\end{smallmatrix}\right)$. Then it is easy to verify
that\begin{footnote}{The red superscript in $F^{\color{red}*}$, is to emphasize the distinction between
Left and Right.}\end{footnote},
\[
F+F^*\succcurlyeq{\scriptstyle\color{blue}T}+F^{\color{red}*}
{\scriptstyle\color{blue}T}F\quad{\rm but}\quad F+
F^*\not\succcurlyeq{\scriptstyle\color{blue}T}+F{\scriptstyle\color{blue}T}F^{\color{red}*}.
\]
{\em Left} $\mathcal{HP}_{\color{blue}T}$ functions were used in \cite[Sections 4-6]{AlpayLew2024} and
in \cite{AlpayLew2025a}. 
Throughout this work we focus on {\em Right}
$\mathcal{HP}_{\color{blue}T}$ functions. 
\bigskip

{\bf c.}~
In the special case where in Eq. \eqref{eq:HP_Delta}
\mbox{${\color{blue}T}=\begin{smallmatrix}{\color{blue}\beta}\end{smallmatrix}I_m$,} where
\mbox{$\begin{smallmatrix}{\color{blue}\beta}\end{smallmatrix}\in[0,~1)$} we
shall say that \mbox{$F\in\mathcal{HP}_{\color{blue}\beta}$} if
$\forall s\in\C_R~$, $F(s)$ is analytic and,
\begin{equation}\label{eq:Basic_Def_HP_Beta}
F(s)+{F(s)}^*\succcurlyeq\begin{smallmatrix}{\color{blue}\beta}\end{smallmatrix}(I_m+{F(s)}^*F(s)).
\end{equation}
In fact, throughout \cite{AlpayLew2021} and in \cite[Sections 1-3]{AlpayLew2024} the set 
$\mathcal{HP}_{\color{blue}\beta}$ 
was used.
}
$\T$
\end{Rk}
\smallskip

Already at this stage, the introduction of Hyper-Positive functions, may be viewed as
a quantitative refinement of the set $\mathcal{P}$. Specifically, we have the following.

\begin{Pn}\label{Pn:HP_Delta_Order}
Definition \ref{Dn:Hyper_Positive} induces a partial order among family of functions, namely if
\[
I_m\succ{\color{cyan}T_2}\succcurlyeq{\color{blue}T_1}\succcurlyeq 0
\]
then
\begin{equation}\label{eq:Partial_Order}
\mathcal{HP}_{\color{cyan}T_2}\subset\mathcal{HP}_{\color{blue}T_1}.
\end{equation}
In addition there is a boundary condition,
\[
{\color{blue}T_1}=0_{m\times m}\quad\Longrightarrow\quad\mathcal{HP}_{{\color{blue}T_1}=0}=\mathcal{P}.
\]
\end{Pn}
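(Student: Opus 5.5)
The argument is pointwise in $s\in\C_R$, so I would fix such an $s$ and write $F=F(s)$. The boundary condition is immediate. With ${\color{blue}T_1}=0$ the right-hand side of Eq. \eqref{eq:HP_Delta} becomes $0+F^*\,0\,F=0$. The condition therefore reads $F+F^*\succcurlyeq 0$, which is exactly Eq. \eqref{eq:Def_P}. The analyticity requirement in $\C_R$ is the same in Definitions \ref{Dn:Positive} and \ref{Dn:Hyper_Positive}, so $\mathcal{HP}_0=\mathcal{P}$.

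For the inclusion \eqref{eq:Partial_Order}, I would start from $F\in\mathcal{HP}_{\color{cyan}T_2}$, which gives
\[
F+F^*\succcurlyeq {\color{cyan}T_2}+F^*{\color{cyan}T_2}F .
\]
It then suffices to prove
\[
{\color{cyan}T_2}+F^*{\color{cyan}T_2}F\succcurlyeq {\color{blue}T_1}+F^*{\color{blue}T_1}F .
\]
The difference of the two sides is $D+F^*DF$, where $D:={\color{cyan}T_2}-{\color{blue}T_1}\succcurlyeq 0$. For every $x\in\C^m$ we have $x^*Dx\geq 0$, and congruence preserves semi-definiteness, so $x^*F^*DFx=(Fx)^*D(Fx)\geq 0$. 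Hence $D+F^*DF\succcurlyeq 0$. Transitivity of $\succcurlyeq$ then gives Eq. \eqref{eq:HP_Delta} with ${\color{blue}T_1}$ for every $s\in\C_R$, and analyticity carries over unchanged. So $F\in\mathcal{HP}_{\color{blue}T_1}$.

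I expect no real obstacle. The only step needing care is to note that the congruence $F^*DF$ is used with a possibly non-invertible $F(s)$. Positive semi-definiteness survives this, although definiteness would not, and only semi-definiteness is required here.
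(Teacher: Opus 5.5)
Your proof is correct and follows essentially the paper's route. The paper writes the condition in the quadratic form \eqref{eq:Quadratic_HP_Delta} with $M_T=\left(\begin{smallmatrix}-T&I_m\\ I_m&-T\end{smallmatrix}\right)$ and notes $M_{T_1}-M_{T_2}=\left(\begin{smallmatrix}T_2-T_1&0\\ 0&T_2-T_1\end{smallmatrix}\right)\succcurlyeq 0$, which after compression by $\left(\begin{smallmatrix}F(s)\\ I_m\end{smallmatrix}\right)$ is exactly your $D+F^*DF\succcurlyeq 0$, and it handles the boundary case $T_1=0$ the same way you do, by direct comparison with Eq. \eqref{eq:Def_P}.
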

\smallskip

In the context of the set $\mathcal{HP}_{\color{blue}\beta}$ from Eq. \eqref{eq:Basic_Def_HP_Beta}
Proposition \ref{Pn:HP_Delta_Order} takes the following form:
\smallskip

\begin{Cy}\label{Cy:HP_beta_Order}
In the case of scalar weights it holds that
\[
1>\begin{smallmatrix}{\color{cyan}{\beta}_2}\end{smallmatrix}\geq
\begin{smallmatrix}{\color{blue}{\beta}_1}\end{smallmatrix}\geq 0\quad\Longrightarrow\quad
\mathcal{HP}_{\color{cyan}{\beta}_2}\subset\mathcal{HP}_{\color{blue}{\beta}_1}\subset
\mathcal{HP}_{{\color{blue}\beta}=0}=\mathcal{P}.
\]
\end{Cy}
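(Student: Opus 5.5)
The corollary is the specialization of Proposition \ref{Pn:HP_Delta_Order} to scalar weights, so the plan is to reduce to it and verify its hypotheses. Set ${\color{cyan}T_2}={\color{cyan}\beta_2}I_m$ and ${\color{blue}T_1}={\color{blue}\beta_1}I_m$. First I will check that these matrices satisfy $I_m\succ{\color{cyan}T_2}\succcurlyeq{\color{blue}T_1}\succcurlyeq 0$. Since scalar multiples of the identity are ordered exactly as their scalars, $1>{\color{cyan}\beta_2}\geq{\color{blue}\beta_1}\geq 0$ gives $(1-{\color{cyan}\beta_2})I_m\in\mathbf{P}_m$, $({\color{cyan}\beta_2}-{\color{blue}\beta_1})I_m\in\overline{\mathbf P}_m$ and ${\color{blue}\beta_1}I_m\in\overline{\mathbf P}_m$. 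I will also note that with ${\color{blue}T}={\color{blue}\beta}I_m$ the defining inequality \eqref{eq:HP_Delta} is literally \eqref{eq:Basic_Def_HP_Beta}, since ${\color{blue}T}+F^*{\color{blue}T}F={\color{blue}\beta}(I_m+F^*F)$. Hence $\mathcal{HP}_{\color{blue}\beta}=\mathcal{HP}_{{\color{blue}\beta}I_m}$.

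Proposition \ref{Pn:HP_Delta_Order} then yields $\mathcal{HP}_{\color{cyan}\beta_2}\subset\mathcal{HP}_{\color{blue}\beta_1}$. Applying the same proposition once more, with the pair ${\color{blue}\beta_1}\geq 0$, gives $\mathcal{HP}_{\color{blue}\beta_1}\subset\mathcal{HP}_{0}$. Its boundary condition gives $\mathcal{HP}_0=\mathcal{P}$, because at ${\color{blue}\beta}=0$ the inequality \eqref{eq:Basic_Def_HP_Beta} reduces to \eqref{eq:Def_P}.

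For the proof to be self-contained, I will also give the direct scalar argument, which is the content of the proposition. Suppose $F+F^*\succcurlyeq{\color{cyan}\beta_2}(I_m+F^*F)$ at a point $s\in\C_R$. Since $I_m+F^*F\succcurlyeq I_m\succ 0$ and ${\color{cyan}\beta_2}-{\color{blue}\beta_1}\geq 0$, we have ${\color{cyan}\beta_2}(I_m+F^*F)\succcurlyeq{\color{blue}\beta_1}(I_m+F^*F)$. Transitivity of $\succcurlyeq$ then gives the ${\color{blue}\beta_1}$ inequality pointwise on $\C_R$. Analyticity in $\C_R$ is shared by all the classes, so it carries over unchanged.

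There is no real obstacle here. The only point requiring care is that, in the matrix case, the monotonicity ${\color{blue}T}\mapsto{\color{blue}T}+F^*{\color{blue}T}F$ needs the congruence $X\succcurlyeq 0\Rightarrow F^*XF\succcurlyeq 0$. In the scalar case this collapses to multiplying the positive semidefinite matrix $I_m+F^*F$ by a nonnegative scalar, which is immediate.
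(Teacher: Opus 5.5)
Your proposal is correct and takes the paper's route: the corollary is obtained there precisely by specializing Proposition~\ref{Pn:HP_Delta_Order} to $T=\beta I_m$. That proposition is proved via $M_{T_1}-M_{T_2}=\mathrm{diag}(T_2-T_1,\,T_2-T_1)\succcurlyeq 0$, which is the same congruence/monotonicity idea as your direct computation $(\beta_2-\beta_1)(I_m+F^*F)\succcurlyeq 0$.
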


This partial order is studied in Subsection \ref{Subsec:Analogy}
and illustrated in Figure \ref{Fig:Sub-Unit_Disk} below.
\bigskip

The parametrization of sets of rational functions, $\mathcal{HP}_{\color{blue}T}$ or
$\mathcal{HP}_{\color{blue}\beta}$ enables us to ``zoom-into" the set $\mathcal{P}$. As a first result
we have the following refinement of Theorem \ref{Tm:Set_Of_P_Functions}.

\begin{Tm}\label{Tm:Set_Of_HP_Functions}
For a given (constant Hermitian) matrix $\begin{smallmatrix}{\color{blue}T}\end{smallmatrix}$, where
\mbox{$I_m\succ\begin{smallmatrix}{\color{blue}T}\end{smallmatrix}\succcurlyeq 0$,} let the set
$\mathcal{HP}_{\color{blue}T}$
of $m\times m$-valued rational functions, be as in Definition \ref{Dn:Hyper_Positive}.

\begin{itemize}
\item[(i)~~~]{}
Let $F(s)$ be in \mbox{$\mathcal{HP}_{\color{blue}T}$.} Whenever its inverse \mbox{${F(s)}^{-1}$} is
well defined, it belongs to the same \mbox{$\mathcal{HP}_{\color{blue}T}$.} In particular, when
\mbox{$I_m\succ{\color{blue}T}\succ 0$,} the inverse \mbox{${F(s)}^{-1}$}, always exists. 
\smallskip

\item[(ii)~~]{}
When \mbox{$F(s)\in\mathcal{HP}_{\color{blue}T}$}, it is equivalent to having the function
\mbox{$\begin{smallmatrix}(I_m-{\color{blue}T}^2)^{-\frac{1}{2}}\end{smallmatrix}
(F(s^*))^*\begin{smallmatrix}(I_m-{\color{blue}T}^2)^{\frac{1}{2}}\end{smallmatrix}$}
in \mbox{$\mathcal{HP}_{\color{blue}T}$,} as well.
\smallskip

\item[(iii)~]{}
For an arbitrary unitary matrix $U\in\C^{m\times m},$ denote
\mbox{$\hat{F}(s):=\begin{smallmatrix}U^*\end{smallmatrix}F(s)\begin{smallmatrix}U\end{smallmatrix}$.}\\
Whenever \mbox{$F(s)\in\mathcal{HP}_{\color{blue}T}$} it implies that 
$\hat{F}(s)\in\mathcal{HP}_{U^*{\color{blue}T}U}$.\\
In the special case where \mbox{$U{\color{blue}T}={\color{blue}T}U$,} also
\mbox{$\hat{F}(s)\in\mathcal{HP}_{\color{blue}T}~$.}
\smallskip

\item[(iv)~]{}
The set \mbox{$\mathcal{HP}_{\color{blue}T}$} is convex.
\smallskip

\item[(v)~~]{}
For given \mbox{$\begin{smallmatrix}{\color{blue}\beta}\end{smallmatrix}\in[0,~1)$},
the set $\mathcal{HP}_{\color{blue}\beta}$ is \mbox{$~m$-matrix-convex.}
\end{itemize}
\end{Tm}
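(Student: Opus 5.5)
The plan is to work pointwise in $s\in\C_R$. Every condition in Definition \ref{Dn:Hyper_Positive} is a matrix inequality at a single point $s$, and analyticity in $\C_R$ is checked separately in each item. Write $\Delta_F(s):=F+F^*-{\color{blue}T}-F^*{\color{blue}T}F$. The basic tool is a Cayley-type reformulation, obtained by direct expansion:
\[
(I+F)^*(I-{\color{blue}T})(I+F)-(I-F)^*(I+{\color{blue}T})(I-F)=2\,\Delta_F .
\]
Since $F\in\mathcal{P}$ (by Proposition \ref{Pn:HP_Delta_Order}), the matrix $I+F(s)$ is invertible for $s\in\C_R$. Hence $F\in\mathcal{HP}_{\color{blue}T}$ if and only if, for all $s\in\C_R$, the matrix
\[
S_F:=(I+{\color{blue}T})^{\frac12}(I-F)(I+F)^{-1}(I-{\color{blue}T})^{-\frac12}
\]
is a contraction.

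\textbf{Item (i).} Assume $F(s)$ is invertible. Multiply $\Delta_F\succcurlyeq0$ on the left by $F^{-*}$ and on the right by $F^{-1}$. This gives $F^{-1}+F^{-*}\succcurlyeq F^{-*}{\color{blue}T}F^{-1}+{\color{blue}T}$, which is the defining inequality for $F^{-1}$. If ${\color{blue}T}\succ0$ and $F(s)x=0$, then $0=x^*\Delta_F x\le -x^*{\color{blue}T}x<0$ unless $x=0$. So $F(s)$ is invertible at every point of $\C_R$, and therefore $F^{-1}$ is analytic there.

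\textbf{Item (iii).} Conjugate the inequality by $U$. This gives $\hat F+\hat F^*\succcurlyeq U^*{\color{blue}T}U+\hat F^*(U^*{\color{blue}T}U)\hat F$, since $UU^*=I$. If $U$ commutes with ${\color{blue}T}$, then $U^*{\color{blue}T}U={\color{blue}T}$.

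\textbf{Item (iv).} The map $F\mapsto F^*{\color{blue}T}F$ is matrix-convex because ${\color{blue}T}\succcurlyeq0$. Hence $F\mapsto\Delta_F$ is concave, and a convex combination of functions in $\mathcal{HP}_{\color{blue}T}$ satisfies the inequality. Analyticity is clearly preserved.

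\textbf{Item (v).} Let ${\color{blue}T}={\color{blue}\beta}I$ and $G=\sum_i V_i^*F_iV_i$ with $\sum_iV_i^*V_i=I_m$. Put $W={\rm col}(V_i)$, which is an isometry, and $D={\rm diag}(F_i)$. Then $G=W^*DW$ and
\[
G^*G=W^*D^*WW^*DW\preccurlyeq W^*D^*DW=\sum_i V_i^*F_i^*F_iV_i .
\]
Since ${\color{blue}\beta}\ge0$ and ${\color{blue}\beta}I$ commutes with everything, summing the inequalities $V_i^*\Delta_{F_i}V_i\succcurlyeq0$ gives $\Delta_G\succcurlyeq0$. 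This argument is exactly where scalar ${\color{blue}T}$ is needed.

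\textbf{Item (ii).} This is the step I expect to be the main obstacle; it is where the Cayley form is used. Set $R:=(I-{\color{blue}T}^2)^{\frac12}=(I+{\color{blue}T})^{\frac12}(I-{\color{blue}T})^{\frac12}$, where all factors commute. Let $G(s)=R^{-1}(F(s^*))^*R$. This function is analytic in $\C_R$ because $s\mapsto s^*$ maps $\C_R$ onto itself. Write $C=(I-F^*)(I+F^*)^{-1}$, evaluated at $s^*$; its two factors commute. Then $(I-G)(I+G)^{-1}=R^{-1}CR$, and so
\[
S_G=(I+{\color{blue}T})^{\frac12}R^{-1}CR(I-{\color{blue}T})^{-\frac12}=(I-{\color{blue}T})^{-\frac12}C(I+{\color{blue}T})^{\frac12}=\big(S_F(s^*)\big)^* .
\]
A matrix and its adjoint have the same norm, so $G\in\mathcal{HP}_{\color{blue}T}$ if and only if $F\in\mathcal{HP}_{\color{blue}T}$. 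The converse direction is the same computation, since the map $F\mapsto G$ is an involution. The only points needing care are the invertibility of $I+G$, which follows from that of $I+F$, and the commutation bookkeeping between $R$ and the factors $(I\pm{\color{blue}T})^{\frac12}$.
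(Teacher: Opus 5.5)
Your proof is correct. For items (i), (iii) and (iv) it is the same computation the paper uses. The paper routes (i) and (iv) through items (iii) and (ii) of Lemma~\ref{La:Structural_Properties}, with $X=Y=-T$ and $V=I_m$, and the proofs of those items are exactly your conjugation by $F^{-1}$ and your concavity of $F\mapsto\Delta_F$. Item (iii) is the same conjugation by $U$.

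For item (ii) you also follow the paper's idea: Cayley transform, rescaling to the contraction $(I_m+T)^{\frac12}G(I_m-T)^{-\frac12}$ (your $S_F$, the paper's $\tilde G$), and the fact that a matrix and its adjoint have the same norm. Your identity $S_G(s)=(S_F(s^*))^*$ closes the argument more directly than the paper's back-substitution through $I_m-\tilde G\tilde G^*\succcurlyeq 0$, whose final display is garbled. You also correctly account for the invertibility of $I_m+G$.

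The genuinely different step is item (v). The paper derives it from Lemma~\ref{La:Structural_Properties}(vii), whose proof asserts $\Theta_j^*M\Theta_l=\delta_{j,l}\Theta_j^*M\Theta_j$.
\begin{itemize}
\item This already fails for $q=1$ and $\Upsilon_1=\Upsilon_2=1/\sqrt2$.
\item The resulting identity $A=\sum_j\Upsilon_j^*Q_j\Upsilon_j$ never uses $x\le 0$. That cannot be right: for $q=1$ and $x>0$ the set $\{e:\ x|e|^2+2v\,\mathrm{Re}\,e+y>0\}$ is the exterior of a disk, so it is not even convex.
\end{itemize}
Your dilation argument is the correct version. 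Take $W$ the column isometry and $\mathcal{D}=\mathrm{diag}(F_i)$. Then
\[
\Delta_G-\sum_iV_i^*\Delta_{F_i}V_i=\beta\,W^*\mathcal{D}^*(I-WW^*)\mathcal{D}W\succcurlyeq 0 .
\]
So the paper's claimed equality is really an inequality, and $\beta\ge 0$ is exactly what makes it go the right way. On this item your route supplies a valid proof where the paper's written one is defective.

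Two small slips:
\begin{itemize}
\item In (i), the chain should read $0\le x^*\Delta_F x=-x^*Tx<0$.
\item In (ii), $s\mapsto (F(s^*))^*$ is analytic in $\C_R$ because it composes two anti-holomorphic operations: conjugating the argument and taking the adjoint. The fact that $s\mapsto s^*$ preserves $\C_R$ only ensures the domain is correct.
\end{itemize}
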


\begin{Rk}
{\rm
{\bf a.}~ Theorem \ref{Tm:Set_Of_HP_Functions} is proved in Section \ref{ Proof_of_HP_T_structure_theorem}
below.
\smallskip

{\bf b.}~ 
{\em Maximality}, is the key to the gap between Theorems \ref{Tm:Set_Of_P_Functions} and
\ref{Tm:Set_Of_HP_Functions}.  More precisely, there are basic differences between the cases where the
parameter ${\color{blue}T}$ is zero or non-singular.
\smallskip

{\bf c.}~ 
Item (i) appeared in item (I) of \cite[Proposition 4.4]{AlpayLew2024}. Another proof is provided below. 
\smallskip

{\bf d.}~ 
Item (ii) appeared in \cite[Proposition 4.7]{AlpayLew2024}, with a typographical error, 
and thus corrected here.
\smallskip

{\bf e.}~ 
Of particular interest is a boundary case, where on $i\R$, Eqs. \eqref{eq:HP_Delta},
\eqref{eq:Basic_Def_HP_Beta} hold with equality. This is the focus of
\cite{AlpayLew2025b}.
$\T$
}
\end{Rk}
\smallskip

It turns out that the fact that the structure of the subset of $\mathcal{HP}_{\color{blue}T}$ functions
is richer than that of $\mathcal{P}$ functions, is carried over to state-space realization setup. To
explore it, one needs to first resort to the celebrated Kalman-Yakubovich-Popov Lemma characterizing
$\mathcal{P}$ rational functions through the respective state space realization. To this end, recall
that a $m\times m$-valued rational function $F(s)$, with no pole at infinity, admits a realization
\begin{equation}\label{eq:Realization}
F(s)=C(sI_n-A)^{-1}B+D,
\quad\quad\quad
R_F={\footnotesize
\left(
\begin{array}{c|c}A&B\\ \hline C&D\end{array}\right)}. 
\end{equation}
Now, if there exists $H\in\mathbf{P}_n$ so that,
\begin{equation}\label{eq:Original_KYP}
\left(\begin{smallmatrix}-H&&0\\~\\0&&I_m\end{smallmatrix}\right)R_F+{R_F}^*
\left(\begin{smallmatrix}-H&&0\\~\\0&&I_m\end{smallmatrix}\right)
\in\overline{\mathbf P}_{n+m},
\end{equation}
then $F(s)$ is in $\mathcal{P}$. See e.g. \cite[Chapter 5]{AnderVongpa1973}, \cite[Chapter 7]{Belev1968}, 
\cite[Subsection 2.7.2]{BGFB1994}, \cite[Chapter 3]{BroLozaMasEge2020},
\cite[Subsection 7.1.1]{DullPaga2000}, \cite[Theorem 3]{Will1972b} and \cite[Proposition 2]{Will1976}.
\smallskip

Next, we are to specialize this result to the framework of quantitatively Hyper-Positive
functions.

\begin{Tm}\label{Tm:Kyp_Hyper_Pos_W}
Let $F(s)$ be a $m\times m$-valued rational function, with no pole at infinity. Let
\[
F(s)=C(sI_n-A)^{-1}B+D,\quad and \,\, let \quad\quad
R_F={\footnotesize
\left(\begin{array}{c|c}A&B\\ \hline C&D\end{array}\right)}\quad 
\]
be a corresponding realization.
\smallskip

\begin{itemize}
\item[(i)~~]{}
If there exist matrices \mbox{$I_m\succ{\color{blue}T}\succcurlyeq 0$}
and $H\in\mathbf{P}_n$, satisfying
\begin{equation}\label{eq:HP_Delta_KYP}
\left(\begin{smallmatrix}-H&&0\\~\\0&&I_m\end{smallmatrix}\right)R_F+{R_F}^*
\left(\begin{smallmatrix}-H&&0\\~\\0&&I_m\end{smallmatrix}\right)
\succcurlyeq\underbrace{\left(\begin{smallmatrix}C~&&D\\~\\0_{m\times n}&&I_m\end{smallmatrix}
\right)^*}_{{\Gamma}^*}
\left(\begin{smallmatrix}{\color{blue}T}&&0\\~\\0&&{\color{blue}T}\end{smallmatrix}\right)
\underbrace{\left(\begin{smallmatrix}C~&&D\\~\\0_{m\times n}&&I_m\end{smallmatrix}\right)}_{\Gamma}
\end{equation}
then the function $F(s)$ is $\begin{smallmatrix}{\color{blue}T}\end{smallmatrix}$-Hyper-Positive.\\
If the above realization is minimal, the converse is true as well.
\smallskip

\item[(ii)~~]{}
Whenever Eq. \eqref{eq:HP_Delta_KYP} holds, with \mbox{$I_m\succ{\color{blue}T}\succ 0$,}
and the realization is minimal, the \mbox{$(n+m)\times(n+m)$}
``matrix" $R_F$ is non-singular, and denote \mbox{$R_{\hat{F}}:={R_F}^{-1}$.} 
\smallskip

\item[(iii)~]{}
Let $\hat{F}(s)$ be a \mbox{$m\times m$-valued} rational function whose realization array is
$R_{\hat{F}}$ from the previous item. Then, $R_{\hat{F}}$ satisfies Eq. \eqref{eq:HP_Delta_KYP}
with the same $\begin{smallmatrix}{\color{blue}T}\end{smallmatrix}$ and the same $H$. In
particular $\hat{F}(s)$ belongs to the same $\mathcal{HP}_{\color{blue}T}$.
\end{itemize}
\end{Tm}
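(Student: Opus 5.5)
Throughout, the plan is to work with the quadratic form of Eq. \eqref{eq:HP_Delta_KYP}: for a vector $w=\left(\begin{smallmatrix}x\\ u\end{smallmatrix}\right)\in\C^{n+m}$ set $J:=\left(\begin{smallmatrix}-H&0\\0&I_m\end{smallmatrix}\right)$. Two observations do most of the work:
\[
R_Fw=\left(\begin{smallmatrix}Ax+Bu\\ Cx+Du\end{smallmatrix}\right),\qquad
\Gamma w=\left(\begin{smallmatrix}Cx+Du\\ u\end{smallmatrix}\right),
\]
and the matrix $\left(\begin{smallmatrix}T&0\\0&T\end{smallmatrix}\right)$ is invariant under interchanging the two block coordinates.

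For the direct implication in (i), fix $s\in\C_R$ and $u\in\C^m$, and take $x=(sI_n-A)^{-1}Bu$. Then $R_Fw=\left(\begin{smallmatrix}sx\\ F(s)u\end{smallmatrix}\right)$ and $\Gamma w=\left(\begin{smallmatrix}F(s)u\\ u\end{smallmatrix}\right)$. Evaluating Eq. \eqref{eq:HP_Delta_KYP} at this $w$ gives
\[
u^*\bigl(F+F^*\bigr)u-2\,{\rm Re}(s)\,x^*Hx\;\geq\;u^*\bigl(T+F^*TF\bigr)u .
\]
Since $H\succ0$ and ${\rm Re}(s)>0$, this yields Eq. \eqref{eq:HP_Delta}.

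Analyticity in $\C_R$ needs a separate argument. The $(1,1)$ block of Eq. \eqref{eq:HP_Delta_KYP} reads $-(HA+A^*H)\succcurlyeq C^*TC\succcurlyeq0$. Combined with $H\succ0$, this Lyapunov inequality forces $\sigma(A)\subset\overline{\C}_L$, so $(sI-A)^{-1}$ is analytic on $\C_R$. This makes the direct implication valid for every realization.

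For the converse under minimality, I would rewrite Eq. \eqref{eq:HP_Delta} as dissipativity with the quadratic supply rate
\[
\left(\begin{smallmatrix}y\\ u\end{smallmatrix}\right)^*
\left(\begin{smallmatrix}-T&I\\ I&-T\end{smallmatrix}\right)
\left(\begin{smallmatrix}y\\ u\end{smallmatrix}\right).
\]
The route then has three steps:
\begin{itemize}
\item[(1)] Analyticity in $\C_R$ together with the inequality there gives, by continuity, the frequency-domain inequality on $i\R$ at all non-pole points. Minimality implies $\sigma(A)\subset\overline{\C}_L$.
\item[(2)] I would invoke the general (Willems-type) KYP lemma for quadratic supply rates, e.g. \cite[Theorem 3]{Will1972b}. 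This yields a Hermitian $H$ satisfying exactly Eq. \eqref{eq:HP_Delta_KYP}, which is the same LMI once expanded.
\item[(3)] To get $H\succ0$ rather than merely Hermitian, I would use the available storage/required supply argument. Nonnegativity of the available storage follows from the time-domain version of Eq. \eqref{eq:HP_Delta} with $\gamma=0$ (Remark \ref{Rk:Supplay_Rate}). Strict positivity then follows from controllability and observability in the standard way.
\end{itemize}
This is the step I expect to be the main obstacle. Poles of $F$ on $i\R$ must be handled, and the KYP lemma has to be used with an indefinite supply matrix. If that proves troublesome, a fallback is a linear fractional (Cayley-type) change of variables that turns $\mathcal{HP}_{T}$ into a bounded-real class, followed by the classical bounded-real lemma and a transfer of the certificate back.

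For (ii), suppose $R_Fw=0$. Then the left side of Eq. \eqref{eq:HP_Delta_KYP} vanishes at $w$, so $w^*\Gamma^*\,{\rm diag}(T,T)\,\Gamma w\leq0$. With $T\succ0$ this forces $\Gamma w=0$, i.e. $u=0$ and $Cx=0$. The first block row of $R_Fw=0$ then gives $Ax=0$. Since $Cx=0$ and $Ax=0$, $x$ lies in the unobservable subspace, so observability gives $x=0$. Hence $R_F$ is nonsingular.

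For (iii), apply the congruence $R_F^{-*}(\cdot)R_F^{-1}$ to Eq. \eqref{eq:HP_Delta_KYP}. Congruence preserves the inequality, and the left side becomes $JR_{\hat F}+R_{\hat F}^*J$ with the same $J$, hence the same $H$. On the right side, let $v=R_Fw=\left(\begin{smallmatrix}\xi\\ y\end{smallmatrix}\right)$, so that $R_{\hat F}v=w=\left(\begin{smallmatrix}x\\ u\end{smallmatrix}\right)$, i.e. $\hat C\xi+\hat Dy=u$. Then
\[
\Gamma R_F^{-1}v=\Gamma w=\left(\begin{smallmatrix}y\\ u\end{smallmatrix}\right),\qquad
\hat\Gamma v=\left(\begin{smallmatrix}\hat C\xi+\hat Dy\\ y\end{smallmatrix}\right)=\left(\begin{smallmatrix}u\\ y\end{smallmatrix}\right).
\]
These differ only by the swap of block coordinates, under which ${\rm diag}(T,T)$ is invariant, so the right side becomes $\hat\Gamma^*\,{\rm diag}(T,T)\,\hat\Gamma$. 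Thus $R_{\hat F}$ satisfies Eq. \eqref{eq:HP_Delta_KYP} with the same $T$ and $H$, and the direct part of (i), which needs no minimality, gives $\hat F\in\mathcal{HP}_{T}$.
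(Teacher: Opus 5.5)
Your proofs of the sufficiency half of (i), of (ii) and of (iii) are correct. The necessity half of (i), however, is only a plan: the step that should produce $H\succ0$ is not justified as written.

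\textbf{Comparison with the paper on (i)-sufficiency, (ii) and (iii).} Item (iii) is the paper's argument written in coordinates. The paper applies Lemma~\ref{La:Structural_Properties}(iii) with $E=R_F$, $V=\mathrm{diag}(-H,I_m)$ and $X=Y=\mathrm{diag}(0,-T)$. Your observation that $\hat\Gamma$ and $\Gamma R_F^{-1}$ differ only by a block swap, which fixes $\mathrm{diag}(T,T)$, is exactly the symmetry $X=Y$.

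Elsewhere your route differs from the paper's.
\begin{itemize}
\item For sufficiency in (i), the paper just invokes the quadratic KYP Lemma~\ref{La:Quadratic_KYP}. Your evaluation of the LMI at $w=((sI_n-A)^{-1}Bu,\,u)$ is self-contained. Your $(1,1)$-block argument, $HA+A^*H\preccurlyeq -C^*TC\preccurlyeq0$, gives $\sigma(A)\subset\overline{\C}_L$. That supplies analyticity in $\C_R$, which Lemma~\ref{La:Quadratic_KYP} takes as a hypothesis rather than proves.
\item For (ii), the paper transfers observability from $(A,C)$ to $(R_F,Q)$ (Lemma~\ref{La:R_Non-singular}) and then applies the inertia theorem for Lyapunov inequalities. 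Combined with $H\succ0$, this yields more than nonsingularity: exactly $n$ eigenvalues of $R_F$ lie in $\C_L$ and $m$ in $\C_R$. Your kernel argument is shorter, does not use $H\succ0$, and gives precisely the nonsingularity the theorem asks for.
\end{itemize}

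\textbf{The gap: necessity in (i).} The paper handles this by citing Lemma~\ref{La:Quadratic_KYP} (from \cite{Lewk2021b}), which delivers $H\in\mathbf{P}_n$ directly. A general quadratic-supply KYP lemma gives only a Hermitian $H$, and your step (3) does not upgrade it.
\begin{itemize}
\item Nonnegativity of the available storage is automatic (take $t=0$).
\item What actually needs proof, whether you use available storage or required supply, is dissipativity from the zero state in the time domain. That is the frequency-to-time implication.
\item Remark~\ref{Rk:Supplay_Rate} argues only time-to-frequency, so it does not supply this.
\item The Hermitian $H$ of step (2) is never used in step (3).
\end{itemize}

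\textbf{Repair when $T\succ0$.} Suppose $T\succcurlyeq\tau I$ with $\tau>0$. Then $\tau\|F(s)u\|^2\le u^*F^*TFu\le 2\,\mathrm{Re}\,u^*Fu\le 2\|Fu\|\|u\|$, so $F$ is bounded on $\C_R$ and has no poles on $i\R$. By minimality, $A$ is Hurwitz.
\begin{itemize}
\item Take any Hermitian solution $H$, supplied by the nonstrict KYP lemma (controllable $(A,B)$, no imaginary eigenvalues).
\item Its $(1,1)$ block reads $HA+A^*H=-N$ with $N\succcurlyeq C^*TC\succcurlyeq0$. Hence $H=\int_0^\infty e^{A^*t}Ne^{At}\,dt\succcurlyeq0$.
\item If $Hx_0=0$, the LMI's quadratic form at $(x_0,0)$ equals $-\|T^{1/2}Cx_0\|^2\ge0$. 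So $(x_0,0)$ lies in the kernel of the LMI matrix.
\item That gives $Cx_0=0$ and $HAx_0=0$. Thus $\ker H$ is an $A$-invariant subspace of $\ker C$, and observability gives $H\succ0$.
\end{itemize}

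\textbf{Repair when $T$ is singular.} Simple poles on $i\R$ with residues $K\succcurlyeq0$ can occur. Comparing the growth of $F^*TF$ and $F+F^*$ near such a pole forces $KTK=0$, i.e.\ $TK=0$.
\begin{itemize}
\item The lossless part $F_0$ therefore satisfies $TF_0\equiv0$.
\item $F-F_0$ is stable and obeys the same inequality on $i\R$, since $F_0+F_0^*=0$ there.
\item The minimal realization splits into a lossless block with $H_0\succ0$, $H_0A_0+A_0^*H_0=0$, $C_0=B_0^*H_0$ and $TC_0=0$, plus a Hurwitz block.
\item With $H=\mathrm{diag}(H_0,H_1)$, the LMI reduces to the one for $F-F_0$, which is handled exactly as in the $T\succ0$ case.
\end{itemize}
Without an argument of this kind, or a citation that already yields $H\succ0$ as the paper's does, the converse in your proposal is not established.
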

\smallskip

\begin{Rk}\label{Rk:KYP_HP}
{\rm
{\bf a.}~ For proof and further details, see Section \ref{Sec:Realizations} below.
\smallskip

{\bf b.}~
As a boundary case, when one substitutes \mbox{${\color{blue}T}=0$}, Eq. \eqref{eq:Original_KYP} is obtained,
i.e. we get the state-space characterization of the set $\mathcal{P}$. For $\mathcal{P}$ functions, assuming
non-singularity of $R_F$, the result of item (iii) appeared in \cite[Theorem 6.5]{Lewk2021a}.
\smallskip

{\bf c.}~
In the special case of $\mathcal{HP}_{\color{blue}\beta}$ (see Eq. \eqref{eq:Basic_Def_HP_Beta})
\eqref{eq:HP_Delta_KYP} is simplified to
\begin{equation}\label{eq:M_HP_beta}
\left(\begin{smallmatrix}-H&&0\\~\\0&&I_m\end{smallmatrix}\right)R_F
+{R_F}^*\left(\begin{smallmatrix}-H&&0\\~\\0&&I_n\end{smallmatrix}\right)
\succcurlyeq\begin{smallmatrix}{\color{blue}\beta}\end{smallmatrix}
\underbrace{
\left(\begin{smallmatrix}C~&&D\\~\\0_{m\times n}&&I_m\end{smallmatrix}\right)^*
}_{{\Gamma}^*}
\underbrace{
\left(\begin{smallmatrix}C~&&D\\~\\0_{m\times n}&&I_m\end{smallmatrix}\right)
}_{\Gamma}.
\end{equation}
\smallskip

This case has already appeared (in a different form) in \cite[Theorem 10]{SakSuz1996}, and without
proof in \cite[Lemma 3.15]{BroLozaMasEge2020}.
\smallskip

{\bf d.}~
Upon comparing Eq. \eqref{eq:Original_KYP} with \eqref{eq:M_HP_beta}, note that to guarantee that
$F(s)$ is in $\mathcal{P}$, the matrix on the right-hand side, must be positive semi-definite (of
any rank, including zero). Now, for
\mbox{$\begin{smallmatrix}{\color{blue}\beta}\end{smallmatrix}\in(0,~1)$,} the
$\mathcal{HP}_{\color{blue}\beta}$ case is more demanding: 
The rank of the matrix on
the right-hand side is {\em at least} \mbox{$~m+{\rm rank}(C)$.}
\smallskip

{\bf e.}~
A particular boundary case, where Eqs. \eqref{eq:HP_Delta_KYP} \eqref{eq:M_HP_beta} hold with equality, 
is addressed in \cite{AlpayLew2025b}.
}
$\T$
\end{Rk}
\smallskip

To gain perspective, we recall the following.

\begin{Dn}\label{Dn:SP}
{\rm
A $m\times m$-valued function $F(s)$ is said to be Strictly Positive (Real), $F\in\mathcal{SP}$, if
there exists $\begin{smallmatrix}\epsilon\end{smallmatrix}>0$ so that
$F(s-\begin{smallmatrix}\epsilon\end{smallmatrix})$ is in $\mathcal{P}$. See e.g.
\cite[Definition 2.58]{BroLozaMasEge2020}.
}

$\T$
\end{Dn}

One can now relate the family of $\mathcal{SP}$ functions to its hyper-positive subset.

\begin{Pn}\label{Pn:SP_and_HP}
The following are equivalent
\begin{itemize}
\item[(i)~~~]{}
Let $F(s)$ be a $m\times m$-valued $\mathcal{HP}_{\color{blue}T}$ with {\em some} ${\color{blue}T}$,
\mbox{$I_m\succ{\color{blue}T}\succ 0$.}
\smallskip

\item[(ii)~~]{}
Eq. \eqref{eq:Original_KYP} holds and its right-hand side is positive definite.
\smallskip

\item[(iii)~]{}
Let $F(s)$ be a $m\times m$-valued $\mathcal{SP}$ function so that
\mbox{$D:=\lim\limits_{s~\rightarrow~\infty}F(s)$} exists and is so that $(D+D^*)\in\mathbf{P}_m$.
\end{itemize}
\end{Pn}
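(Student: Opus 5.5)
The plan is to prove the cycle (i)$\Rightarrow$(iii)$\Rightarrow$(ii)$\Rightarrow$(i). I read item (ii) as: for a minimal realization $R_F$ there is $H\in\mathbf{P}_n$ for which the left-hand side of Eq. \eqref{eq:Original_KYP} is positive definite. The frequency-domain characterization on the extended imaginary axis will be the common currency between the items.

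\textbf{(ii)$\Rightarrow$(i).} Suppose the left-hand side $M$ of Eq. \eqref{eq:Original_KYP} satisfies $M\succcurlyeq \epsilon I_{n+m}$ for some $\epsilon>0$. The matrix $\Gamma$ of Eq. \eqref{eq:HP_Delta_KYP} is fixed, so $\Gamma^*\Gamma\preccurlyeq \|\Gamma\|^2 I$. Choose $\beta>0$ with $\beta\|\Gamma\|^2\le\epsilon$ and $\beta<1$. Then Eq. \eqref{eq:M_HP_beta} holds, which is Eq. \eqref{eq:HP_Delta_KYP} with ${\color{blue}T}=\beta I_m\succ 0$. Theorem \ref{Tm:Kyp_Hyper_Pos_W}(i) (sufficiency direction, no minimality needed) then gives $F\in\mathcal{HP}_{\color{blue}\beta}$. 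This is (i).

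\textbf{(i)$\Rightarrow$(iii).} Since ${\color{blue}T}\succcurlyeq tI_m$ with $t>0$, Eq. \eqref{eq:HP_Delta} gives $F+F^*\succcurlyeq t(I_m+F^*F)$ on $\C_R$, where $F^*TF\succcurlyeq tF^*F$. Taking norms yields $2\|F(s)\|\ge t\|F(s)\|^2$, so $\|F\|\le 2/t$ on $\C_R$. Hence the rational function $F$ has no poles on $i\R$ and none at infinity. Passing to the limit:
\begin{itemize}
\item $D+D^*\succcurlyeq t(I_m+D^*D)\succ 0$;
\item $F(i\omega)+F(i\omega)^*\succcurlyeq tI_m$ uniformly on $i\R\cup\{\infty\}$.
\end{itemize}
All poles of $F$ lie in $\C_L$, at distance at least some $\delta>0$ from $i\R$. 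By uniform continuity of $F$ on a closed strip $-\delta/2\le{\rm Re}\,s\le 0$, including the point at infinity, the bound $F+F^*\succcurlyeq \tfrac t2 I_m$ persists on the line ${\rm Re}\,s=-\epsilon$ for small $\epsilon$. The maximum principle, applied after a Cayley transform exactly as in Remark \ref{Rk:Supplay_Rate}a, then shows $F(s-\epsilon)\in\mathcal{P}$.

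\textbf{(iii)$\Rightarrow$(i).} Since $F(s-\epsilon)\in\mathcal{P}$, $F$ is analytic on ${\rm Re}\,s>-\epsilon$ and $F(i\omega)+F(i\omega)^*\succcurlyeq 0$. The hypothesis $D+D^*\succ0$ is needed to upgrade this to a uniform bound $F(i\omega)+F(i\omega)^*\succcurlyeq cI_m$ on $i\R\cup\{\infty\}$. Near infinity this is immediate. On compact pieces I would use the standard fact that an $\mathcal{SP}$ function has $F(i\omega)+F(i\omega)^*\succ0$. One route is the decomposition $F(s)=G(s-\epsilon)$ with $G\in\mathcal{P}$: a null vector at some $i\omega$ would be an interior minimum of the positive harmonic function $x^*(G+G^*)x$ on $\C_R$, forcing it to vanish identically, which contradicts $D+D^*\succ0$. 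Also $\|F\|\le K$ on $i\R$. Therefore
\[
F+F^*\succcurlyeq \tfrac{c}{1+K^2}(I_m+F^*F)\quad\text{on } i\R,
\]
and the Cayley/maximum-principle argument of Remark \ref{Rk:Supplay_Rate}a extends this to $\C_R$. This gives $F\in\mathcal{HP}_{\color{blue}\beta}$ with $\beta=c/(1+K^2)$.

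\textbf{(i)$\Rightarrow$(ii).} With a minimal realization, the converse in Theorem \ref{Tm:Kyp_Hyper_Pos_W}(i) yields $H$ with $M\succcurlyeq\Gamma^*{\rm diag}({\color{blue}T},{\color{blue}T})\Gamma$. The $(2,2)$ block of the right-hand side contains ${\color{blue}T}\succ0$, but the state block $C^*{\color{blue}T}C$ may be singular, so $M$ need not be definite yet. I expect this to be the main obstacle. My first attempt would be to replace $F$ by $F(s-\epsilon)$, which is still in $\mathcal{HP}$ for small $\epsilon$ by the previous step, with realization $A+\epsilon I$. Applying the same converse to it gives $H_\epsilon$ with an extra $2\epsilon H_\epsilon\succ0$ in the state block, whose interaction with the off-diagonal block is controlled by the strictly definite $(2,2)$ block via a Schur complement. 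Failing that, I would invoke the strict KYP lemma for $\mathcal{SP}$ functions with $D+D^*\succ0$ from the literature (e.g. \cite{BroLozaMasEge2020}).
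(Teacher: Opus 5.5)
The paper states this proposition without proof. It is only invoked later, e.g.\ for the left inclusion in Example~\ref{Ex:SP_Inversion}, so there is no argument of the authors to compare with. Judged on its own, your proof is correct, with a few small repairs.

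\textbf{Structure and reading of (ii).} What you actually prove is (ii)$\Rightarrow$(i), (i)$\Leftrightarrow$(iii) and (i)$\Rightarrow$(ii), not the announced cycle; this is still logically sufficient. Your reading of (ii) as referring to a minimal (or to \emph{some}) realization is necessary. If $A$ has an eigenvalue $\lambda\in i\R$ with eigenvector $v$, which a non-minimal realization can have, then $v^*(-HA-A^*H)v=-2{\rm Re}(\lambda)\,v^*Hv=0$. In that case the matrix in Eq.~\eqref{eq:Original_KYP} is never definite. The direction (ii)$\Rightarrow$(i) needs no minimality.

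\textbf{Sign slip in (iii)$\Rightarrow$(i).} With $G(s):=F(s-\epsilon)\in\mathcal{P}$ one has $F(s)=G(s+\epsilon)$, not $G(s-\epsilon)$. Then $F(i\omega)=G(\epsilon+i\omega)$ is an \emph{interior} point of $\C_R$, which is exactly what your minimum-principle argument needs.

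\textbf{Extension to $\C_R$.} This step is cleanest if you note that $F+F^*\succcurlyeq\beta(I_m+F^*F)$ is equivalent to $\|F-\beta^{-1}I_m\|\le\sqrt{1-\beta^2}/\beta$, the matricial $\mathbb{D}_{\rm Inv}(\beta)$. The passage from $i\R\cup\{\infty\}$ to $\C_R$ is then just the maximum modulus principle for the bounded analytic function $F-\beta^{-1}I_m$.

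\textbf{(i)$\Rightarrow$(ii) is not an obstacle.} Your first attempt closes. Let $H_\epsilon\in\mathbf{P}_n$ come from the converse in Theorem~\ref{Tm:Kyp_Hyper_Pos_W}(i), applied to the minimal realization $(A+\epsilon I_n,B,C,D)$ of $F(s-\epsilon)\in\mathcal{HP}_{T'}$ with $T'\succ 0$. Then
\[
\left(\begin{smallmatrix}-H_\epsilon&0\\0&I_m\end{smallmatrix}\right)R_F+R_F^*\left(\begin{smallmatrix}-H_\epsilon&0\\0&I_m\end{smallmatrix}\right)\succcurlyeq\Gamma^*\left(\begin{smallmatrix}T'&0\\0&T'\end{smallmatrix}\right)\Gamma+\left(\begin{smallmatrix}2\epsilon H_\epsilon&0\\0&0\end{smallmatrix}\right).
\]
The right-hand side is positive semi-definite. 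A null vector $(x,u)$ forces $x=0$ through the $2\epsilon H_\epsilon$ term, and then $u^*(T'+D^*T'D)u=0$, so $u=0$. Hence the right-hand side is positive definite, and no appeal to the strict KYP lemma in the literature is needed. Your $H_\epsilon$ is precisely the ``different $H$'' the authors allude to in the remark preceding Example~\ref{Ex:HP_T_with_T_singular}.

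\textbf{A leaner variant.} You do not even need the shifted function to be hyper-positive. The same maximum-principle argument you use for (i)$\Rightarrow$(iii) gives $F+F^*\succcurlyeq\frac{t}{2}I_m$ on ${\rm Re}\,s\ge-\epsilon$. Hence $F(s-\epsilon)-\frac{t}{4}I_m\in\mathcal{P}$, with minimal realization $(A+\epsilon I_n,B,C,D-\frac{t}{4}I_m)$. The classical positive real lemma then gives $H\in\mathbf{P}_n$ for which the matrix in Eq.~\eqref{eq:Original_KYP} is $\succcurlyeq\left(\begin{smallmatrix}2\epsilon H&0\\0&\frac{t}{2}I_m\end{smallmatrix}\right)\succ 0$.
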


Proposition \ref{Pn:SP_and_HP} is {\em qualitative}, i.e. the parameter
$\begin{smallmatrix}{\color{blue}T}\end{smallmatrix}$ is not specified. As suggested by the title of
this work, and illustrated by Proposition \ref{Pn:HP_Delta_Order} and Theorems 
\ref{Tm:Set_Of_HP_Functions}, \ref{Tm:Kyp_Hyper_Pos_W}, our approach is {\em quantitative}. 
\smallskip

We next examine realization of {\em families} of $\mathcal{HP}_{\color{blue}T}$ functions.

\begin{Pn}\label{Pn:Sets_Of_Realizations}
Consider the framework of Theorem \ref{Tm:Kyp_Hyper_Pos_W}.

\begin{itemize}
\item[(i)~~]{}
Let the matricial parameters $I_m\succ{\color{blue}T}\succcurlyeq 0$ and $H\in\mathbf{P}_n$ be prescribed.
Then the set of all \mbox{$(n+m)\times(n+m)$} realization arrays $R_F$ satisfying Eq.
\eqref{eq:HP_Delta_KYP}, is convex.
\smallskip

\item[(ii)~]{}
Consider the set of all \mbox{$(n+m)\times(n+m)$} realization arrays $R_F$ satisfying Eq. \eqref{eq:M_HP_beta},
with $H=I_n$. Whenever the parameter is
\mbox{${\color{blue}T}=\begin{smallmatrix}{\color{blue}\beta}\end{smallmatrix}I_m$,}
\mbox{$\begin{smallmatrix}{\color{blue}\beta}\end{smallmatrix}\in[0,~1)$}
is prescribed, this set is \mbox{$n, m$-matrix-convex.}
\end{itemize}
\end{Pn}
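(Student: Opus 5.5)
For item (i), rewrite Eq. \eqref{eq:HP_Delta_KYP} as a quadratic matrix inequality in the single unknown $R_F$. The aim is to show that the map it defines is matrix-concave in $R_F$. Denote
\[
J:=\left(\begin{smallmatrix}-H&&0\\ 0&&I_m\end{smallmatrix}\right),\qquad
\mathcal{T}:=\left(\begin{smallmatrix}{\color{blue}T}&&0\\ 0&&{\color{blue}T}\end{smallmatrix}\right)\succcurlyeq 0 .
\]
The key observation is that $\Gamma$ is an affine function of $R_F$:
\[
\Gamma=PR_F+Q,\qquad
P:=\left(\begin{smallmatrix}0_{m\times n}&&I_m\\ 0_{m\times n}&&0_{m}\end{smallmatrix}\right),\quad
Q:=\left(\begin{smallmatrix}0_{m\times n}&&0_m\\ 0_{m\times n}&&I_m\end{smallmatrix}\right).
\]
Indeed, $PR_F$ picks out the bottom block row $(C~~D)$, and $Q$ supplies the constant row $(0~~I_m)$. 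Hence Eq. \eqref{eq:HP_Delta_KYP} reads
\[
\Phi(R):=JR+R^*J-(PR+Q)^*\mathcal{T}(PR+Q)\succcurlyeq 0 .
\]
The first two terms of $\Phi$ are linear in $R$. For the quadratic term, take $R=\sum_i\lambda_iR_i$ with $\lambda_i\ge 0$ and $\sum\lambda_i=1$, write $X_i:=PR_i+Q$ and $\bar X:=\sum_i\lambda_iX_i$, so that $\bar X=PR+Q$. Then the identity
\[
\sum_i\lambda_iX_i^*\mathcal{T}X_i-\bar X^*\mathcal{T}\bar X=\sum_i\lambda_i(X_i-\bar X)^*\mathcal{T}(X_i-\bar X)\succcurlyeq 0
\]
gives $\Phi(R)\succcurlyeq\sum_i\lambda_i\Phi(R_i)\succcurlyeq 0$, which proves convexity.

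For item (ii), the plan is to run the same argument with matrix weights. Here $H=I$ and $\mathcal{T}=\beta I_{2m}$. Let $R_i$ be $(n_i+m_i)\times(n_i+m_i)$ arrays satisfying Eq. \eqref{eq:M_HP_beta} with $H=I_{n_i}$. The matrix-convex combinations are $R=\sum_iV_i^*R_iV_i$, where the weights are block-diagonal,
\[
V_i=\mathrm{diag}(V_i^{(n)},V_i^{(m)}),\qquad \sum_iV_i^{(n)*}V_i^{(n)}=I_n,\quad \sum_iV_i^{(m)*}V_i^{(m)}=I_m .
\]
The block-diagonal form is what keeps the state/input partition of the realization array. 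With $H=I$ this form gives the intertwining relation $J_iV_i=V_iJ$. Consequently the left-hand side of Eq. \eqref{eq:M_HP_beta} for $R$ equals
\[
\sum_iV_i^*\left(J_iR_i+R_i^*J_i\right)V_i\succcurlyeq\beta\sum_iV_i^*\Gamma_i^*\Gamma_iV_i .
\]
It therefore suffices to prove $\Gamma^*\Gamma\preccurlyeq\sum_iV_i^*\Gamma_i^*\Gamma_iV_i$.

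This last inequality is the main step. Set $W_i:=\mathrm{diag}(V_i^{(m)},V_i^{(m)})$, so that $\sum_iW_i^*W_i=I_{2m}$. One checks the intertwinings $P_iV_i=W_iP$ and $Q_iV_i=W_iQ$. Combined with $\sum_iW_i^*W_iQ=Q$, these give
\[
\Gamma=PR+Q=\sum_iW_i^*(P_iR_i+Q_i)V_i=\sum_iW_i^*\Gamma_iV_i .
\]
Stack $Y:=\mathrm{col}(\Gamma_iV_i)$ and $\mathcal{W}:=\mathrm{col}(W_i)$. Then $\Gamma=\mathcal{W}^*Y$ and $\mathcal{W}^*\mathcal{W}=I$, so $\mathcal{W}\mathcal{W}^*\preccurlyeq I$. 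Hence
\[
\Gamma^*\Gamma=Y^*\mathcal{W}\mathcal{W}^*Y\preccurlyeq Y^*Y=\sum_iV_i^*\Gamma_i^*\Gamma_iV_i,
\]
which is a Cauchy--Schwarz (Kadison-type) inequality.

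The delicate point, and the reason the statement is restricted to $H=I$ and ${\color{blue}T}=\beta I_m$, is that both the middle weights $J$ and $\mathcal{T}$ must commute through the compressions $V_i$ and $W_i$. With a general $H$ or a general ${\color{blue}T}$, the identity $J_iV_i=V_iJ$ and the corresponding identity for $\mathcal{T}$ fail, and the argument breaks down.
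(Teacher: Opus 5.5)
Your proof is correct. Item (i) is the paper's argument in different notation; item (ii) follows the same idea but supplies details the paper omits, and uses a different (and sound) mechanism for the quadratic term.

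\textbf{Item (i).} The paper uses $\Gamma^*\mathrm{diag}(T,T)\Gamma=R_F^*\,\mathrm{diag}(0,T)\,R_F+\mathrm{diag}(0,T)$. This turns Eq.~\eqref{eq:HP_Delta_KYP} into $VE+E^*V+E^*XE+Y\succcurlyeq 0$ with $E=R_F$, $X=Y=\mathrm{diag}(0,-T)$ and $V=\mathrm{diag}(-H,I_m)$. It then applies the two-point identity of Lemma~\ref{La:Structural_Properties}(ii). Your affine form $\Gamma=PR+Q$ with the $k$-point variance identity is the same concavity argument.

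\textbf{Item (ii).} The paper substitutes into Lemma~\ref{La:Structural_Properties}(vii). That lemma is stated for scalar blocks $xI_q,vI_q,yI_q$ and for ordinary $q$-matrix-convexity. The passage to the block weights $\mathrm{diag}(-I_n,I_m)$, $\mathrm{diag}(0,-\beta I_m)$ and to $n,m$-matrix-convexity is declared ``straightforward and thus omitted''.

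Your proof is precisely that omitted adaptation. The intertwining $J_iV_i=V_iJ$ is exactly where $H=I$ enters. The Kadison--Schwarz step through the isometry $\mathrm{col}(W_i)$ handles the quadratic term.

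Your mechanism is also the correct one. The paper's proof of (vii) claims the exact identity $A=\sum_j\Upsilon_j^*Q_j\Upsilon_j$, via a Kronecker-delta orthogonality $\Theta_j^*M\Theta_l=\delta_{j,l}\Theta_j^*M\Theta_j$. That orthogonality fails in general, and the cross terms survive when $x\neq 0$. For instance, take $q=1$, $\Upsilon_1=\Upsilon_2=1/\sqrt2$ and $E_1=-E_2=1$: then $A=y$, whereas $\sum_j\Upsilon_j^*Q_j\Upsilon_j=x+y$. What is true is the one-sided inequality you prove, which is what $x=-\beta\le 0$ buys.

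The paper's route gives brevity and a single uniform template. Yours is self-contained, handles varying dimensions $n_i,m_i$ explicitly, and shows exactly where $H=I_n$ and $T=\beta I_m$ are needed.

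\textbf{A small slip.} To get $\sum_iW_i^*P_iR_iV_i=PR$ you need $W_i^*P_i=PV_i^*$, not $P_iV_i=W_iP$; the latter does not let you move $V_i^*$ past $R_i$. The needed identity holds by the same block computation, with both sides equal to $\left(\begin{smallmatrix}0&V_i^{(m)*}\\0&0\end{smallmatrix}\right)$, so nothing breaks.
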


\begin{Rk}
{\rm 
{\bf a.}~ In both parts of Proposition \ref{Pn:Sets_Of_Realizations}, minimality of the original realizations
is not assumed. Furthermore, even when the original realization arrays are all minimal, the resulting
realization is not necessarily minimal.
\smallskip

{\bf b.}~ In item (i) of Proposition \ref{Pn:Sets_Of_Realizations}, the case where ${\color{blue}T}=0$, i.e. of
positive functions, first appeared in \cite[Theorem 6.5]{Lewk2021a}.
$\T$
}
\end{Rk}

All results which to the best of our knowledge, have previously appeared (including ours)
are explicitly indicated.
\smallskip

The outline of this work are drawn by the table of contents, at the very opening.

\section{Background}
\label{Sec:Background}
\setcounter{equation}{0}

\subsection{Rational Functions in Quadratic Form}

In the sequel we find it convenient to re-write the family $\mathcal{HP}_{\color{blue}T}$ with
\mbox{$I_m\succ{\color{blue}T}\succcurlyeq 0$}, (originally defined in Eq. \eqref{eq:HP_Delta},
as functions $F(s)$ satisfying
\[
F(s)+(F(s))^*\succcurlyeq\begin{smallmatrix}{\color{blue}T}\end{smallmatrix}+
(F(s))^*\begin{smallmatrix}{\color{blue}T}\end{smallmatrix}F(s),
\]
$\forall s\in\C_R$),
in a quadratic form: Namely, all functions $F(s)$ satisfying,
\begin{equation}\label{eq:Quadratic_HP_Delta}
\mathcal{HP}_{\color{blue}T}=\left\{~\begin{smallmatrix}F(s)\end{smallmatrix}~:~
\left(\begin{smallmatrix}F(s)\\~\\I_m\end{smallmatrix}\right)^*
\left(\begin{smallmatrix}-{\color{blue}T}&&~~I_m\\~\\~~I_m&&-{\color{blue}T}
\end{smallmatrix}\right)
\left(\begin{smallmatrix}F(s)\\~\\I_m\end{smallmatrix}\right)
\in\overline{\mathbf P}_m\quad\forall s\in\C_R~\right\}.
\end{equation}
Substituting ${\color{blue}T}=0$, yields the quadratic
form of $\mathcal{P}$ functions from Eq. \eqref{eq:Def_P}, namely
\begin{equation}\label{eq:Quad_Def_P}
\mathcal{P}=\left\{~\begin{smallmatrix}F(s)\end{smallmatrix}~:~
\left(\begin{smallmatrix}F(s)\\~\\I_m\end{smallmatrix}\right)^*
\left(\begin{smallmatrix}0&&~~I_m\\~\\~~I_m&&~0\end{smallmatrix}\right)
\left(\begin{smallmatrix}F(s)\\~\\I_m\end{smallmatrix}\right)
\in\overline{\mathbf P}_m\quad\forall s\in\C_R~\right\}.
\end{equation}

\begin{Rk}\label{Rk:Quadratic_HP_T}
{\rm
Eq. \eqref{eq:Quadratic_HP_Delta} can be casted in the framework of supply-rate, addressed
in item {\bf a.} of Remark \ref{Rk:Supplay_Rate}. Indeed, employing again the relation $y=Fu$,
one can now write,
\[
\begin{matrix}
u^*
\left(\begin{smallmatrix}F(s)\\~\\I_m\end{smallmatrix}\right)^*
\left(\begin{smallmatrix}-{\color{blue}T}&&~~I_m\\~\\~~I_m&&-{\color{blue}T}
\end{smallmatrix}\right)
\left(\begin{smallmatrix}F(s)\\~\\I_m\end{smallmatrix}\right)
u
&=&
\left(\begin{smallmatrix}y\\~\\u\end{smallmatrix}\right)^*
\left(\begin{smallmatrix}-{\color{blue}T}&&~~I_m\\~\\~~I_m&&-{\color{blue}T}
\end{smallmatrix}\right)
\left(\begin{smallmatrix}y\\~\\u\end{smallmatrix}\right)
&~\\~\\~&=&
\overbrace{u^*y+y^*u}^{2{\rm Re} \langle u, y\rangle}-(u^*{\color{blue}T}u+y^*{\color{blue}T}y)&\geq 0.
\end{matrix}
\]
}
$\T$
\end{Rk}

An advantage of the above quadratic formulation, is that it enables us to technically unify the treatment
of rational functions and state-space realization.
\bigskip

To proceed, we need to resort to the classical Cayley transform.

\begin{Dn}\label{Dn:Cayley_Transform}
{\rm
{\bf a.}~ We denote by $\mathcal{C}(A)$ the Cayley transform of a matrix
$A\in\C^{n\times n}$, \mbox{$-1\not\in{\rm spec}(A)$},
\[
\mathcal{C}\left(A\right):=\left(I_n-A\right)\left(I_n+A\right)^{-1}
=-I_n+2\left(I_n+A\right)^{-1}.
\]
{\bf b.}~ Next we formally apply the Cayley transform to \mbox{$m\times m$-valued} rational functions 
\begin{equation}\label{eq:Cayley_F}
\mathcal{C}\left(F(s)\right):=(I_m-F(s))(I_m+F(s))^{-1}\quad\quad
{\rm det}(I_m+F(s))\not\equiv 0.
\end{equation}
$\T$
}
\end{Dn}

Recall that the Cayley transform is involutive in the sense that,
whenever well defined,
\[
\mathcal{C}\left(\mathcal{C}\left(A\right)\right)=A.
\]
Recall also now that $F\in\mathcal{P}$ if and only if
\[
G(s)=\mathcal{C}\left(F(s)\right)
\]
(see Eq. \eqref{eq:Cayley_F}),
belongs to $\mathcal{B}$, the family of Bounded (Real) functions, given by,
\begin{equation}\label{eq:Quad_Def_B}
\mathcal{B}=\left\{~\begin{smallmatrix}G(s)\end{smallmatrix}~:~
\left(\begin{smallmatrix}G(s)\\~\\I_m\end{smallmatrix}\right)^*
\left(\begin{smallmatrix}-I_m&&0\\~\\0&&I_m\end{smallmatrix}\right)
\left(\begin{smallmatrix}G(s)\\~\\I_m\end{smallmatrix}\right)\in\overline{\mathbf P}_m
\quad\forall s\in\C_R~\right\}.
\end{equation}
This function family will be used in the sequel.

\subsection{An analogy with $\mathbb{C}$}
\label{Subsec:Analogy}

We first examine open disks, in $\mathbb{C}$, of the form
\[
\mathbb{D}({\scriptstyle{\rm Center}},~{\scriptstyle{\rm Radius}})
=\{c\in\C~:~{\scriptstyle{\rm Radius}}>|c-{\scriptstyle{\rm Center}}|~\}
\quad\quad
\begin{smallmatrix}{\rm Center}&\in\C\\~\\
{\rm Radius}&>0.
\end{smallmatrix}
\]
Recall that the Cayley transform (Definition \ref{Dn:Cayley_Transform}) forms a bijection between
\mbox{$\mathbb{D}(0,~1)$,} the open unit disk, 
and $\mathbb{C}_R$, the
open right half of the complex plane, i.e.
\[
\mathbb{D}(0,~1)=\mathcal{C}\left(\C_R\right).
\]
To refine the analysis, a scalar parameter, \mbox{${\scriptstyle\beta}\in[0,~1)$} is introduced, and
we examine disks of the form~~ \mbox{$\mathbb{D}_{\rm Center}({\scriptstyle\beta}):=
\mathbb{D}\left(0+i0,~{\scriptstyle\frac{\sqrt{1-\beta}}{\sqrt{1+\beta}}}\right)$.}~
Clearly, for \mbox{${\scriptstyle\beta}=0$,} the unit disk is recovered. 
\smallskip

Under the Cayley transform these disks are mapped to~~
\mbox{$\mathbb{D}_{\rm Inv}({\scriptstyle\beta}):=
\mathbb{D}\left({\scriptstyle\frac{1}{\beta}}+i0,~
{\scriptstyle\frac{\sqrt{1-{\beta}^2}}{\beta}}\right)$.}\\
Expectedly, $\mathbb{C}_R$ is recovered for \mbox{${\scriptstyle\beta}=0$.~} It turns out that under
inversion\begin{footnote}{$(\mathbb{D}({\rm Center},~{\rm Radius}))^{-1}:=\{\frac{1}{c}~:~
c\in\mathbb{D}({\rm Center},~{\rm Radius})~\}$, whenever defined.}\end{footnote}
, each disk is mapped {\em onto} itself (and hence the subscript ``Inv"). This can be shown
in two ways (a) directly, as in \cite[Lemma 3.2 (iii)]{Lewk2024a}, or (b) by combining the three
following facts that (i) \mbox{$\mathbb{D}_{\rm Center}({\scriptstyle\beta})=-
\mathbb{D}_{\rm Center}({\scriptstyle\beta})$,} (ii) whenever the Cayley transform is well defined.
\mbox{$\mathcal{C}(-X)=\left(\mathcal{C}(X)\right)^{-1}$} and (iii) Eq.
\eqref{eq:Classical_Stein_Lypunov}, below.\\
In Figure  \ref{Fig:Sub-Unit_Disk}, these disks, along with their image under the Cayley transform,
are illustrated, where the color is preserved.

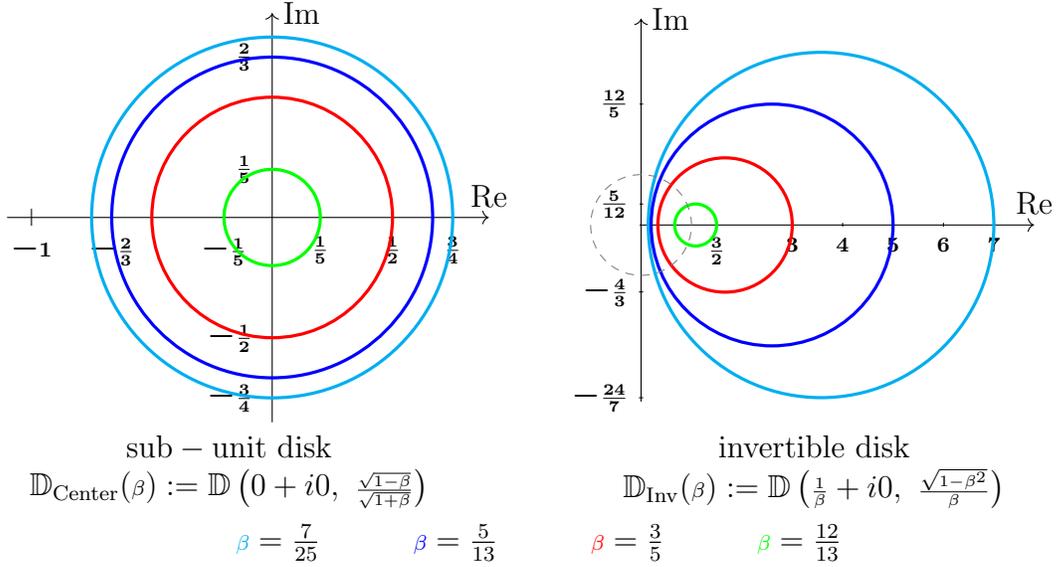
\begin{figure}[H]
\begin{minipage}[b]{0.38\linewidth}
\begin{tikzpicture}[scale=3.2,cap=round]
\tikzstyle{axes}=[]
\tikzstyle{important line}=[very thick]
\tikzstyle{information text}=[rounded corners,fill=red!10,inner sep=1ex]
\begin{scope}[style=axes]
\
\draw[->] (-1.10,0) -- (0.9,0) node[above] {Re};
\draw[->] (0,-0.85) -- (0,0.85) node[right] {Im};

\foreach \x/\xtext in{
-1/{\mbox{\boldmath$-{\scriptstyle 1}$}},
-0.666666/{\mbox{\boldmath$-{\scriptstyle\frac{2}{3}}$}},
-0.2/{\mbox{\boldmath$-{\scriptstyle\frac{1}{5}}$}}, 
0.2/{\mbox{\boldmath${\scriptstyle\frac{1}{5}}$}}, 
0.5/{\mbox{\boldmath${\scriptstyle\frac{1}{2}}$}},
0.75/{\mbox{\boldmath${\scriptstyle\frac{3}{4}}$}}
}
\draw[xshift=\x cm] (0pt,1pt) -- (0pt,-1pt) node[below,fill=white]
{$\xtext$}; 

\foreach \y/\ytext in{
-0.75/{\mbox{\boldmath$-{\scriptstyle\frac{3}{4}}$}},
-0.5/{\mbox{\boldmath$-{\scriptstyle\frac{1}{2}}$}},
0.2/{\mbox{\boldmath${\scriptstyle\frac{1}{5}}$}}, 
0.666666/{\mbox{\boldmath${\scriptstyle\frac{2}{3}}$}}
	}
\draw[yshift=\y cm] (1pt,0pt) -- (-1pt,0pt) node[left,fill=white]
{$\ytext$};
\end{scope}
\draw[arrows=->,style=important line, cyan] (0,0) circle (0.75);
\draw[arrows=->,style=important line, blue] (0,0) circle (0.666666666);
\draw[arrows=->,style=important line, red] (0,0) circle (0.5);
\draw[arrows=->,style=important line, green] (0,0) circle (0.2);
\end{tikzpicture}
\begin{center}
$\begin{matrix}
{\rm sub-unit~disk}
\\
\mathbb{D}_{\rm Center}({\scriptstyle\beta}):=
\mathbb{D}\left(0+i0,~{\scriptstyle\frac{\sqrt{1-\beta}}{\sqrt{1+\beta}}}\right)
\end{matrix}$
\end{center}
\end{minipage}
\quad\quad\quad
\begin{minipage}[b]{0.42\linewidth}
\begin{tikzpicture}[scale=0.67,cap=round]
\tikzstyle{axes}=[]
\tikzstyle{important line}=[very thick]
\tikzstyle{information text}=[rounded corners,fill=red!10,inner sep=1ex]
\begin{scope}[style=axes]
\
\draw[->] (-0.3,0) -- (7.8,0) node[above] {Re};
\draw[->] (0,-3.5) -- (0,4.1) node[right] {Im};

\foreach \x/\xtext in {
1.5/{\mbox{\boldmath${\scriptstyle\frac{3}{2}}$}}, 
3/{\mbox{\boldmath${\scriptstyle 3}$}},
4/{\mbox{\boldmath${\scriptstyle 4}$}},
5/{\mbox{\boldmath${\scriptstyle 5}$}},
6/{\mbox{\boldmath${\scriptstyle 6}$}},
7/{\mbox{\boldmath${\scriptstyle 7}$}}
}
\draw[xshift=\x cm] (0pt,1pt) -- (0pt,-1pt) node[below,fill=white]
{$\xtext$}; 

\foreach \y/\ytext in {
-3.43/{\mbox{\boldmath$-{\scriptstyle\frac{24}{7}}$}},
-1.33/{\mbox{\boldmath$-{\scriptstyle\frac{4}{3}}$}},
0.41666/{\mbox{\boldmath${\scriptstyle\frac{5}{12}}$}},
2.4/{\mbox{\boldmath${\scriptstyle\frac{12}{5}}$}}
}
\draw[yshift=\y cm] (1pt,0pt) -- (-1pt,0pt) node[left,fill=white]
{$\ytext$};
 \end{scope}
\draw[arrows=->,style=important line, cyan] (25/7,0) circle (24/7);
\draw[arrows=->,style=important line, blue] (2.6,0) circle (2.4);
\draw[arrows=->,style=important line, red] (5/3,0) circle (4/3);
\draw[arrows=->,style=important line, green] (13/12,0) circle (5/12);
\draw[dashed, gray] (0,0) circle (1);
\end{tikzpicture}
\begin{center}
$\begin{matrix}
{\rm invertible~disk}
\\
\mathbb{D}_{\rm Inv}({\scriptstyle\beta}):=
\mathbb{D}\left({\scriptstyle\frac{1}{\beta}}+i0,~
{\scriptstyle\frac{\sqrt{1-{\beta}^2}}{\beta}}\right)
\end{matrix}$
\end{center}
\end{minipage}
\smallskip

\begin{center}
${\scriptstyle\color{cyan}\beta}=\frac{7}{25}\quad\quad\quad
{\scriptstyle\color{blue}\beta}=\frac{5}{13}\quad\quad\quad
{\scriptstyle\color{red}\beta}=\frac{3}{5}\quad\quad\quad
{\scriptstyle\color{green}\beta}=\frac{12}{13}
$
\end{center}
\caption{Sub-Unit Disks and their Image under the Cayley Transform}
\label{Fig:Sub-Unit_Disk}
\end{figure}
\smallskip

As already pointed out, the disks in the two parts of in Figure \ref{Fig:Sub-Unit_Disk}
(including the boundary case, \mbox{${\scriptstyle\beta}=0$}), are
related through the Cayley transform, i.e.
\begin{equation}\label{eq:Cayley_Disks}
\mathbb{D}_{\rm Inv}({\scriptstyle\beta})=\mathcal{C}\left(
\mathbb{D}_{\rm Center}({\scriptstyle\beta})\right)
\end{equation}
We now extend Eq. \eqref{eq:Cayley_Disks} to the framework of $n\times n$ matrices. First, using the
Cayley transform for a pair of  matrices $A$ and $\hat{A}$ (see Definition \ref{Dn:Cayley_Transform}) 
denote,
\begin{equation}\label{eq:Cayley_Ttansform}
\hat{A}=\mathcal{C}(A).
\end{equation}
In this framework, the unit disk is substituted by the Stein inclusions. Specifically,
for a non-singular $n\times n$ Hermitian parameter $H$, with the above $A$ and $\hat{A}$,
one has the following equivalence between the Stein and the Lyapunov inclusions,
\begin{equation}\label{eq:Classical_Stein_Lypunov}
H-\hat{A}^*H\hat{A}\succcurlyeq 0,\quad\quad\quad HA+A^*H\succcurlyeq 0.
\end{equation}
In particular, whenever $H$ is positive definite, the spectrum of $A$ is in the closed right-half
plane and the spectrum of $\hat{A}$ is in the closed unit disk. For more details see e.g.
\cite{Ando2001}, \cite{Tau1964}.\\
For completeness we point out that having in the Lyapunov equation,
\mbox{$HA+A^*H\succ 0$,} \mbox{$-H\succ 0$,} it implies that
\mbox{${\rm spec}(A)\subset\C_L$,,} and then $A$ is said to be ``Hurwitz stable", see e.g.
\cite{Belev1968}, \cite{BroLozaMasEge2020}, \cite{DullPaga2000},
\cite{Gant1971} and \cite{Zhou1996}.
\smallskip

Now, the above scalar parameter ${\scriptstyle\beta}$, is extended to ${\color{blue}T}$,
\mbox{$I_n\succ{\color{blue}T}\succcurlyeq 0$.} Then, an equivalence between the following,
\begin{equation}\label{eq:Hyper_Stein_Lypunov}
H-\hat{A}^*H\hat{A}\succcurlyeq{\color{blue}T}+\hat{A}^*{\color{blue}T}\hat{A},\quad\quad\quad
HA+A^*H\succcurlyeq{\color{blue}T}+A^*{\color{blue}T}A,
\end{equation}
(quantitative) Hyper-Stein and Hyper-Lyapunov inclusions is obtained. As before, for ${\color{blue}T}=0$
the classical case from Eq. \eqref{eq:Classical_Stein_Lypunov} is recovered. These relations still
satisfy the Cayley transform in Eq. \eqref{eq:Cayley_Ttansform}. For details see \cite{Lewk2024a}. This
may be viewed as the matricial form of Eq. \eqref{eq:Cayley_Disks}.
\smallskip

One can now extend the above, from the framework of constant matrices in Eq. \eqref{eq:Hyper_Stein_Lypunov},
to matrix-valued functions, by taking $H=I_n$ and consider\footnote{To facilitate distinction, next to
{\em functions}, the parameter ${\color{blue}T}$ appears in a smaller font.},
\begin{equation}\label{eq:Def_F_and_Hat_F}
\begin{matrix}
I_m-{G(s)}^*G(s)\succcurlyeq\begin{smallmatrix}{\color{blue}T}\end{smallmatrix}
+{G(s)}^*\begin{smallmatrix}{\color{blue}T}\end{smallmatrix}G(s)\\~\\
F(s)+{F(s)}^*\succcurlyeq
\begin{smallmatrix}{\color{blue}T}\end{smallmatrix}
+{F(s)}^*\begin{smallmatrix}{\color{blue}T}\end{smallmatrix}F(s)
\end{matrix}
\quad\quad\quad s\in\mathbb{C}_R~.
\end{equation}
Note that the functions $F(s)$ and $G(s)$ in Eq. \eqref{eq:Def_F_and_Hat_F},
are related through the Cayley transform, 
i.e.
\begin{equation}\label{eq:Cayley_HB_T}
G(s)=\mathcal{C}\left(F(s)\right),
\end{equation}
as defined in Eq. \eqref{eq:Cayley_F}. Now, the quadratic forms corresponding to 
Eq. \eqref{eq:Def_F_and_Hat_F} are: The lower equation is given by
Eq. \eqref{eq:Quadratic_HP_Delta}; the upper equation is given by
\begin{equation}\label{eq:Quad_Def_HB_T}
\mathcal{HB}_{\color{blue}T}=\left\{~\begin{smallmatrix}G(s)\end{smallmatrix}~:~
\left(\begin{smallmatrix}G(s)\\~\\I_m\end{smallmatrix}\right)^*
\left(\begin{smallmatrix}-(I_m+{\color{blue}T})&&0\\~\\0&&I_m+{\color{blue}T}\end{smallmatrix}\right)
\left(\begin{smallmatrix}G(s)\\~\\I_m\end{smallmatrix}\right)\in\overline{\mathbf P}_m
\quad\quad\forall s\in\C_R~\right\}.
\end{equation}
Namely, a quantitatively Hyper-Bounded version of Eq. \eqref{eq:Quad_Def_B}.
\bigskip

We next briefly return to the disks and illustrate how with a given
\mbox{$\mathbb{D}({\scriptstyle{\rm Center}},~{\scriptstyle{\rm Radius}})$,} contained in some
\mbox{$\mathbb{D}_{\rm Inv}({\scriptstyle\beta})$,} ${\scriptstyle\beta}\in(0.~1)$, one can define
{\em three different} one-to-one maps to disks, within $\mathbb{D}_{\rm Center}({\scriptstyle\beta})$,
with the same ${\scriptstyle\beta}$. Take for example, 
\mbox{${\color{red}\mathbb{D}_0}:=\mathbb{D}(1,~1)$.} The smallest
$\mathbb{D}_{\rm Inv}({\scriptstyle\beta})$ containing $\mathbb{D}(1,~1)$, is with
\mbox{${\scriptstyle\beta}={\scriptstyle\frac{3}{5}}$.}
Then
take \mbox{${\color{blue}\mathbb{D}_1}=\mathbb{D}(-\frac{1}{4},~\frac{1}{4})$,}
\mbox{${\color{blue}\mathbb{D}_2}=\mathbb{D}(\frac{1}{8},~\frac{3}{4})$,}
\mbox{${\color{blue}\mathbb{D}_3}=\mathbb{D}(-\frac{1}{8},~\frac{3}{4})$.}
It turns out that 
the smallest $\mathbb{D}_{\rm Center}({\scriptstyle\beta})$ containing 
each of these three blue disks, is the same, with
\mbox{${\scriptstyle\beta}={\scriptstyle\frac{3}{5}}$.} Furthermore,
it is easy to verify that 
\[
{\color{blue}\mathbb{D}_1}=\mathcal{C}({\color{red}\mathbb{D}_0})\quad\quad\quad{\color{blue}\mathbb{D}_2}
={\scriptstyle\frac{\beta}{1+\beta}}\cdot{\color{red}\mathbb{D}_0}-{\scriptstyle\frac{1}{1+\beta}}
\quad\quad\quad{\color{blue}\mathbb{D}_3}={\scriptstyle\frac{1}{1+\beta}}-{\scriptstyle\frac{\beta}{1+\beta}
}\cdot{\color{red}\mathbb{D}_0}~.
\]
In principle, with an abuse of notation, one may view Figure \ref{Fig:Two_Maps} as representing these four
disks (${\color{blue}\mathbb{D}_1}$ upper-left, ${\color{blue}\mathbb{D}_2}$ lower-left). In addition, in
Figure \ref{Fig:Two_Maps}, in dashed curves, there is a red
\mbox{$\mathbb{D}_{\rm Inv}({\scriptstyle\beta})$,} and three blue
\mbox{$\mathbb{D}_{\rm Center}({\scriptstyle\beta})$,} all with
\mbox{${\scriptstyle\beta}={\scriptstyle\frac{3}{5}}$.}
\smallskip

We next extend the above disks illustration, to the framework of $m\times m$-valued rational functions:
From Eqs. \eqref{eq:Def_F_and_Hat_F}, \eqref{eq:Cayley_HB_T}, \eqref{eq:Quad_Def_HB_T} we know that whenever
$F(s)$ is a given function within $\mathcal{HP}_{\color{blue} T}$, for some
\mbox{$I_m\succ{\color{blue}T}\succcurlyeq 0$,} \mbox{$G_1:=\mathcal{C}(F)$} is a
$\mathcal{HB}_{\color{blue} T}$ function, with the same ${\color{blue}T}$. It turns out that, when
\mbox{$I_m\succ{\color{blue}T}\succ 0$,} (i.e. ${\color{blue}T}$ is {\em non-singular}), with the same
$F$ one can write down two additional one-to-one maps to functions within the same
$\mathcal{HB}_{\color{blue} T}$, say $G_2$, $G_3$. Indeed, take 
\begin{equation}\label{eq:Map_HP_HB}
\begin{matrix}
F(s)&=&\begin{smallmatrix}{\color{blue}T}^{-1}+(I_m+{\color{blue}T}^{-1})^{\frac{1}{2}}\end{smallmatrix}
G_2(s)\begin{smallmatrix}(I+{\color{blue}T}^{-1})^{\frac{1}{2}}\end{smallmatrix}
\\~\\
F(s)&=&\begin{smallmatrix}{\color{blue}T}^{-1}-(I_m+{\color{blue}T}^{-1})^{\frac{1}{2}}\end{smallmatrix}
G_3(s)\begin{smallmatrix}(I+{\color{blue}T}^{-1})^{\frac{1}{2}}\end{smallmatrix}
\\~\\
G_2(s)&=&\begin{smallmatrix}(I_m+{\color{blue}T}^{-1})^{-\frac{1}{2}}\end{smallmatrix}(F(s)-
\begin{smallmatrix}{\color{blue}T}^{-1}\end{smallmatrix})
\begin{smallmatrix}(I_m+{\color{blue}T}^{-1})^{-\frac{1}{2}}\end{smallmatrix}
\\~\\
G_3(s)&=&\begin{smallmatrix}(I_m+{\color{blue}T}^{-1})^{-\frac{1}{2}}\end{smallmatrix}(
\begin{smallmatrix}{\color{blue}T}^{-1}\end{smallmatrix}-F(s))
\begin{smallmatrix}(I_m+{\color{blue}T}^{-1})^{-\frac{1}{2}}\end{smallmatrix}.
\end{matrix}
\end{equation}
Exploiting the symmetry $G^*G=(-G)^*(-G)$, we have $G_2=-G_3$.
\smallskip

Note that, in contrast to the Cayley transform, the map in Eq. \eqref{eq:Map_HP_HB}, is affine.
In \cite{AlpayLew2025a} and \cite{AlpayLew2025b}, the functional version of this affine map
is exploited to explore the structure of $\mathcal{HP}_{\color{blue}T}$ functions.\\
The functions $F(s)$, $G_1(s)=\mathcal{C}(F)$, $G_2(s)$ and $G_3(s)$, are illustrated for 
${\color{blue}T}={\scriptstyle\beta}I_m$, in Figure \ref{Fig:Two_Maps}.

\begin{figure}[H]
\centering
\begin{minipage}[b]{0.38\linewidth}
 \begin{tikzpicture}[scale=2.3,cap=round]
    \tikzstyle{axes}=[]
    \tikzstyle{important line}=[very thick]
    \tikzstyle{information text}=[rounded corners,fill=red!10,inner sep=1ex]
    \begin{scope}[style=axes]
  \
      \draw[->] (-0.7,0) -- (0.7,0) node[above] {Re};
      \draw[->] (0,-0.6) -- (0,0.7) node[right] {Im};
      \foreach \x/\xtext in{
  -0.5/{\mbox{\boldmath$-{\scriptstyle\frac{1}{2}}$}},
      -0.25/{\mbox{\boldmath$-{\scriptstyle\frac{1}{4}}$}}, 
        0.25/{\mbox{\boldmath${\scriptstyle\frac{1}{4}}$}},
        0.5/{\mbox{\boldmath${\scriptstyle\frac{1}{2}}$}}
}
\draw[xshift=\x cm] (0pt,1pt) -- (0pt,-1pt) node[below,fill=white]
              {$\xtext$}; 

      \foreach \y/\ytext in{
        -0.5/{\mbox{\boldmath$-{\scriptstyle\frac{1}{2}}$}},
        -0.25/{\mbox{\boldmath$-{\scriptstyle\frac{1}{4}}$}},
         0.25/{\mbox{\boldmath${\scriptstyle\frac{1}{4}}$}},
        0.5/{\mbox{\boldmath${\scriptstyle\frac{1}{2}}$}}
}
       \draw[yshift=\y cm] (1pt,0pt) -- (-1pt,0pt) node[left,fill=white]
              {$\ytext$};
    \end{scope}
   \draw[arrows=->,style=important line, blue] (-0.25,0) circle (0.25);
\draw[color=blue, thick] [->] (-0.251,0.25) -- (-0.249,0.25){};
   \draw[arrows=->,style=dashed, blue] (0,0) circle (0.5);
 \end{tikzpicture}
$\begin{matrix}
g_1(s)=&\mathcal{C}({\color{red}f(s)})=&\frac{-1}{s+2}
\\~\\
\end{matrix}$
\end{minipage}
\quad\quad\quad
\begin{minipage}[b]{0.48\linewidth}
\begin{tikzpicture}[scale=0.85,cap=round]
    \tikzstyle{axes}=[]
    \tikzstyle{important line}=[very thick]
    \tikzstyle{information text}=[rounded corners,fill=red!10,inner sep=1ex]
    \begin{scope}[style=axes]
  \
      \draw[->] (-0.1,0) -- (3.8,0) node[above] {Re};
      \draw[->] (0,-1.6) -- (0,1.7) node[right] {Im};

      \foreach \x/\xtext in {
       0.333333/{\mbox{\boldmath$\scriptstyle\frac{1}{3}$}}, 
       1.0/{\mbox{\boldmath${\scriptstyle 1}$}}, 
       2.0/{\mbox{\boldmath${\scriptstyle 2}$}}, 
        3/{\mbox{\boldmath${\scriptstyle 3}$}}
}
\draw[xshift=\x cm] (0pt,1pt) -- (0pt,-1pt) node[below,fill=white]
              {$\xtext$}; 

      \foreach \y/\ytext in {
   -1.333333/{\mbox{\boldmath${\scriptstyle-\frac{4}{3}}$}},
1.00/{\mbox{\boldmath${\scriptstyle 1}$}}
}
       \draw[yshift=\y cm] (1pt,0pt) -- (-1pt,0pt) node[left,fill=white]
              {$\ytext$};
    \end{scope}
   \draw[arrows=->,style=important line, red] (2,0) circle (1.0);
\draw[color=red, thick] [->] (2.05,-1.0) -- (1.95,-1.0){};
   \draw[arrows=->,style=dashed, red] (5/3,0) circle (4/3);
\end{tikzpicture}
$
{\color{red}f(s)=\begin{smallmatrix}\frac{s+3}{s+1}\end{smallmatrix}}~~{\rm in}~~
\mathcal{HP}_{\color{blue}\beta}\quad
\begin{smallmatrix}{\color{blue}\beta}\end{smallmatrix}
=
\begin{smallmatrix}\frac{3}{5}\end{smallmatrix}
$
\end{minipage}

\begin{minipage}[b]{0.38\linewidth}
 \begin{tikzpicture}[scale=2.3,cap=round]
    \tikzstyle{axes}=[]
    \tikzstyle{important line}=[very thick]
    \tikzstyle{information text}=[rounded corners,fill=red!10,inner sep=1ex]
    \begin{scope}[style=axes]
  \
      \draw[->] (-0.7,0) -- (0.7,0) node[above] {Re};
      \draw[->] (0,-0.6) -- (0,0.7) node[right] {Im};

      \foreach \x/\xtext in{
  -0.5/{\mbox{\boldmath${\scriptstyle-\frac{1}{2}}$}},
        0.25/{\mbox{\boldmath${\scriptstyle\frac{1}{4}}$}},
        0.5/{\mbox{\boldmath${\scriptstyle\frac{1}{2}}$}}
}
\draw[xshift=\x cm] (0pt,1pt) -- (0pt,-1pt) node[below,fill=white]
              {$\xtext$}; 

      \foreach \y/\ytext in{
        -0.375/{\mbox{\boldmath${\scriptstyle-\frac{3}{8}}$}},
        0.5/{\mbox{\boldmath${\scriptstyle\frac{1}{2}}$}}
}
       \draw[yshift=\y cm] (1pt,0pt) -- (-1pt,0pt) node[left,fill=white]
              {$\ytext$};
    \end{scope}
   \draw[arrows=->,style=important line, blue] (0.125,0) circle (0.375);
\draw[color=blue, thick] [->] (0.126,-0.375) -- (0.124,-0.375){};
   \draw[arrows=->,style=dashed, blue] (0,0) circle (0.5);
 \end{tikzpicture}
\begin{center}
$
\begin{matrix}
g_2(s)=
\frac{1}{1+{\color{blue}\beta}}(\begin{smallmatrix}{\color{blue}\beta}\end{smallmatrix}{\color{red}f(s)}-1)
=\frac{2-s}{4(s+1)}
\end{matrix}
$
\end{center}
\end{minipage}
\quad\quad\quad
\begin{minipage}[b]{0.48\linewidth}
\begin{tikzpicture}[scale=2.3,cap=round]
    \tikzstyle{axes}=[]
    \tikzstyle{important line}=[very thick]
    \tikzstyle{information text}=[rounded corners,fill=red!10,inner sep=1ex]
    \begin{scope}[style=axes]
  \
      \draw[->] (-0.7,0) -- (0.7,0) node[above] {Re};
      \draw[->] (0,-0.6) -- (0,0.7) node[right] {Im};
      \foreach \x/\xtext in {
       -0.5/{\mbox{\boldmath${\scriptstyle-\frac{1}{2}}$}}, 
       0.25/{\mbox{\boldmath${\scriptstyle\frac{1}{4}}$}},
       0.5/{\mbox{\boldmath${\scriptstyle\frac{1}{2}}$}}
}
\draw[xshift=\x cm] (0pt,1pt) -- (0pt,-1pt) node[below,fill=white]
              {$\xtext$}; 

      \foreach \y/\ytext in {
   -0.375/{\mbox{\boldmath${\scriptstyle-\frac{3}{8}}$}},
   0.5/{\mbox{\boldmath${\scriptstyle\frac{1}{2}}$}}
}
       \draw[yshift=\y cm] (1pt,0pt) -- (-1pt,0pt) node[left,fill=white]
              {$\ytext$};
    \end{scope}
   \draw[arrows=->,style=important line, blue] (-0.125,0) circle (0.375);
\draw[color=blue, thick] [->] (-0.390,0.264) -- (-0.389,0.265){};
   \draw[arrows=->,style=dashed, blue] (0,0) circle (0.5);
\end{tikzpicture}
\begin{center}
$
g_3(s)=\frac{1}{1+{\color{blue}\beta}}(1-\begin{smallmatrix}{\color{blue}\beta}\end{smallmatrix}
{\color{red}f(s)})=\frac{s-2}{4(s+1)}
$
\end{center}
\end{minipage}
   \caption{Three $\mathcal{HB}_{\color{blue}\beta}$ associated with the same
$\mathcal{HP}_{\color{blue}\beta}$ function.}
   \label{Fig:Two_Maps}
\end{figure}
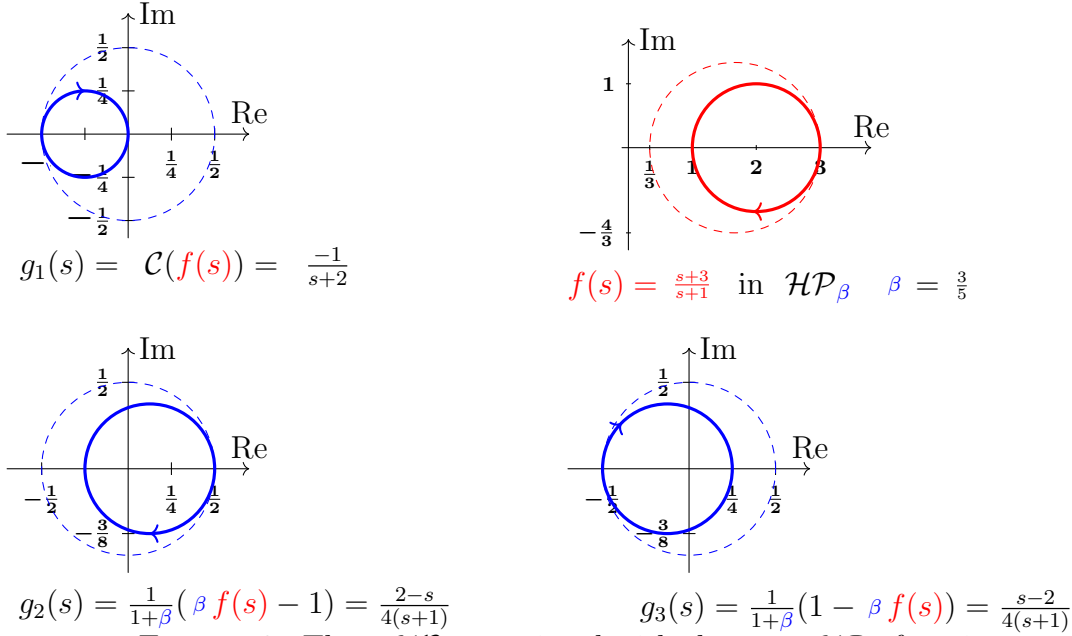
Within the family of $\mathcal{HP}_{\color{blue}T}$ functions
described in Eq. \eqref{eq:HP_Delta} $\mathcal{HP}_{\color{blue}T}$ of a special interest are
the ~{\em canonical}~ ones defined as
\begin{center}
$
F(s)+(F(s))^*-\left(\begin{smallmatrix}{\color{blue}T}\end{smallmatrix}+
(F(s))^*\begin{smallmatrix}{\color{blue}T}\end{smallmatrix}F(s)\right)~~
\left\{\begin{matrix}\succcurlyeq 0&&\forall s\in\C_R\\~\\ =0&&\forall s\in{i}\R.
\end{matrix}\right.
$
\end{center}

This is illustrated in Figure \ref{Figure:Reza_HP}. The subset of {\em canonical}
$\mathcal{HP}_{\color{blue}T}$ functions is in the focus of \cite{AlpayLew2025b}.\\

\begin{figure}[H]
\centering
\begin{minipage}{0.50\linewidth}
{\rm Consider three $\mathcal{HP}_{\beta}$ functions with 
\mbox{$\begin{smallmatrix}\beta\end{smallmatrix}=\frac{3}{5}$}\\
\mbox{${\color{red}f_1}(s)=\frac{3}{5}+\frac{8a^2}{5(s+a)^2}$}\quad
\mbox{${\color{blue}f_2}(s)=\frac{41}{15}-\frac{8b^2}{5(s+b)^2}$,}\\
and a {\em canonical} \mbox{$f_3(s)=\frac{3s+\frac{1}{3}c}{s+c}~$.}
In particular,

$\begin{matrix}{\color{red}f_1}(s)_{|_{s=\pm{i}c}}=f_3(s)_{|_{s=\pm{i}\frac{c}{3}}}=&\frac{3}{5}
\mp{i}\frac{4}{5}\\~\\{\color{blue}f_2}(s)_{|_{s=\pm{i}b}}=f_3(s)_{|_{s=\mp{i}3c}}=&\frac{41}{15}
\mp{i}\frac{4}{5}~.\end{matrix}$
\bigskip

The value of the positive parameters $a$, $b$ or $c$, do not affect the Nyquist plots to the right.
}
\end{minipage}\quad\quad\begin{minipage}{0.44\linewidth}
\includegraphics[width=0.99\textwidth]{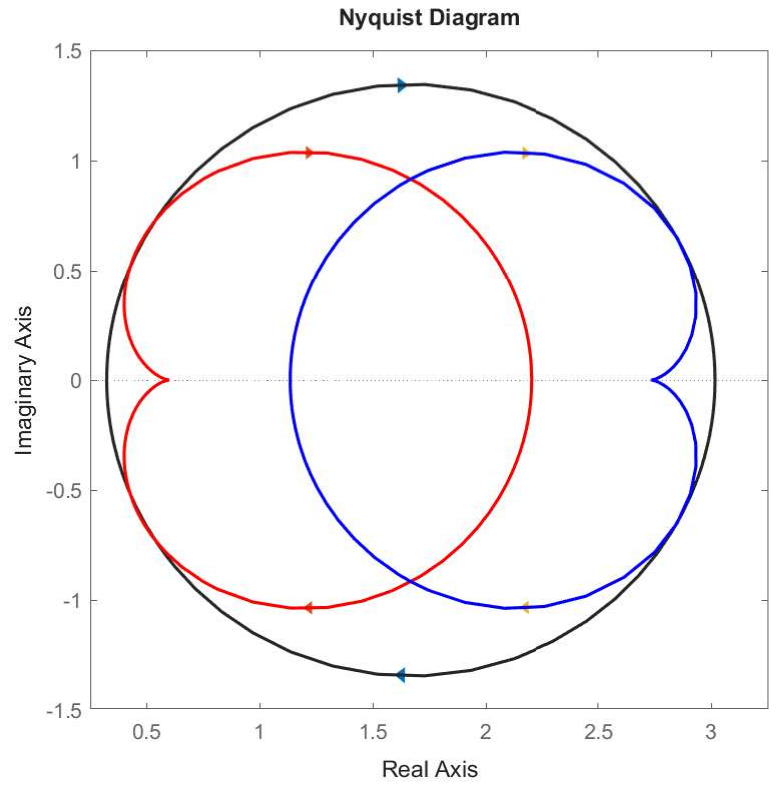}
		\end{minipage}
\caption{The Nyquist plots of:~${\color{red}f_1}(s)=\frac{3}{5}+\frac{8a^2}{5(s+a)^2}$~
${\color{blue}f_2}(s)=\frac{41}{15}-\frac{8b^2}{5(s+b)^2}$ and
$f_3(s)=\frac{3s+\frac{1}{3}c}{s+c}$~.}
\label{Figure:Reza_HP}
\end{figure}

We are now ready to start proving the results presented in the Introduction.
\\

{\bf Proof of Proposition \ref{Pn:HP_Delta_Order}:~}\\
\noindent
Indeed, if in Eq. \eqref{eq:Quadratic_HP_Delta} we denote \mbox{$M_{\color{blue}T}:=
\left(\begin{smallmatrix}-{\color{blue}T}&&~~I_m\\~\\~~I_m&&-{\color{blue}T} \end{smallmatrix}\right)$}
then, for $\mathcal{HP}_{\color{blue}T_1}$ and $\mathcal{HP}_{\color{cyan}T_2}$ one has that
\[
\overbrace{\left(\begin{smallmatrix}-{\color{blue}T}_1&&~~I_m\\~\\~~I_m&&
-{\color{blue}T}_1\end{smallmatrix}\right)}^{M_{{\color{blue}T_1}}}
-
\overbrace{\left(\begin{smallmatrix}-{\color{cyan}T_2}&&~~I_m\\~\\~~I_m&&
-{\color{cyan}T_2}\end{smallmatrix}\right)}^{M_{\color{cyan}T_2}}
=
\left(\begin{smallmatrix}{\color{cyan}T_2}-{\color{blue}T_1}&&0\\~\\0&&
{\color{cyan}T_2}-{\color{blue}T_1}\end{smallmatrix}\right)
\in\overline{\mathbf P}_{2m}
\]
and thus, Eq. \eqref{eq:Partial_Order} holds.
\smallskip

The fact that the set $\mathcal{P}$ is on some boundary is immediate upon comparing Eqs.
\eqref{eq:HP_Delta}, \eqref{eq:Quadratic_HP_Delta}, with Eqs. \eqref{eq:Def_P},
\eqref{eq:Quad_Def_P}, respectively.
\qed

\subsection{A Useful Lemma}

A unifying algebraic framework used throughout this work is of sets of $q\times q$ matrices satisfying
Matricial Quadratic Form Inequalities of the form
\begin{equation}\label{eq:E}
\begin{matrix}
\mathbf{E}=&\left\{ E\in{\C}^{q\times q}~:~\left(\begin{smallmatrix}E\\~\\I_q\end{smallmatrix}\right)^*
M\left(\begin{smallmatrix}E\\~\\I_q\end{smallmatrix}\right)\in\mathbf{P}_q~\right\}\\~\\
\overline{\mathbf E}=&\left\{ E\in{\C}^{q\times q}~:~
\left(\begin{smallmatrix}E\\~\\I_q\end{smallmatrix}\right)^*
M
\left(\begin{smallmatrix}E\\~\\I_q\end{smallmatrix}\right)
\in\overline{\mathbf P}_q~\right\}
\end{matrix}
\quad\quad\quad
\begin{smallmatrix}
M:=\left(\begin{smallmatrix}X&&V\\~\\V&&Y\end{smallmatrix}\right)
\\~\\~\\
X,~V,~Y\in\overline{\mathbf H}_q
\\~\\~\\
{\rm inertia}(H)=(q,~0,q).
\end{smallmatrix}
\end{equation}
In this case, $M$ is said to have ``non-singular (or regular) balanced\begin{footnote}{In particular, if
$V=0$, it implies that $-X,~Y\in\mathbf{P}_q$, and if $X=Y=0$, then $V\in\mathbf{H}_q$.}\end{footnote}
inertia", see e.g. \cite{LoewyPierce1995} and \cite{PierRodm1988}.\quad
As before, $\overline{\mathbf E}$ denotes the closure of the open set $\mathbf{E}$.
\smallskip

Sometimes it is more convenient to explicitly write Eq. \eqref{eq:E} as
\begin{equation}\label{eq:Explicit_Qudratic_Form}
\begin{matrix}
\mathbf{E}=&\left\{ E\in{\C}^{q\times q}~:~VE+E^*V+E^*XE+Y\succ 0 \right\}
\\~\\
\overline{\mathbf E}=&\left\{ E\in{\C}^{q\times q}~:~VE+E^*V+E^*XE+Y\succcurlyeq 0 \right\}
\end{matrix}
\quad\quad
X,~V,~Y\in\overline{\mathbf H}_q~.
\end{equation}
See e.g. \cite[Eq. (9.11)]{LanRod1995}.

\begin{Rk}\label{Rk:Riccati}
{\rm
A word of caution, to consolidate Eq. \eqref{eq:Explicit_Qudratic_Form} with the framework of the renowned
Algebraic Riccati Equation, e.g. \cite[Section 6.2]{DullPaga2000}, \cite{LanRod1995}, one needs to confine
the discussion to
\[
\mathbf{E}\bigcap \overline{\mathbf H}_q~,
\]
(and in addition substitutes $\succcurlyeq$ by equality). However, such an assumption, is too
restrictive to our purposes.
$\T$
}
\end{Rk}

A closer scrutiny of Eqs. \eqref{eq:E}, \eqref{eq:Explicit_Qudratic_Form} reveals that
this seemingly simple formulation, implies further structural properties of the sets
$\mathbf{E}$ and $\overline{\mathbf E}$. To this end, we first need to recall the following.

\begin{Dn}\label{Dn:n-MatrixConvex}
{\rm
{\bf a.}~ A family $\mathbf{A}_n\subset\C^{n\times n}$ is said to be~
$~n$-{\em matrix-convex}~ if for all natural $k$:\\
For all $A_1$, $\ldots$, $A_k$ in $\mathbf{A}_n$, for all 
${\scriptstyle\Upsilon}_1$,
$\ldots$,
${\scriptstyle\Upsilon}_k\in\C^{n\times n}$
one has that
\begin{equation}\label{eq:Quasi_Matrix_Convex_Def}
\sum\limits_{j=1}^k{{\scriptstyle\Upsilon}_j}^*
{\scriptstyle\Upsilon}_j=I_n~,
\quad~~\Longrightarrow~~\quad
\sum\limits_{j=1}^k{{\scriptstyle\Upsilon}_j}^*
A_j{\scriptstyle\Upsilon}_j\in\mathbf{A}_n~.
\end{equation}
Alternatively, for all natural $k$,
\begin{equation}\label{eq:AlternativeDefMatrix-Convex}
\left(\begin{smallmatrix}{\Upsilon}_1\\ \vdots\\~\\{\Upsilon}_k\end{smallmatrix}\right)^*
\left(\begin{smallmatrix}{\Upsilon}_1\\ \vdots\\~\\{\Upsilon}_k\end{smallmatrix}\right)
=I_n
\quad~~
\Longrightarrow
\quad~~
\left(\begin{smallmatrix}{\Upsilon}_1\\ \vdots\\~\\{\Upsilon}_k\end{smallmatrix}\right)^*
\left(\begin{smallmatrix}A_1&&~&&~\\ ~&&\ddots&&~\\~\\~&&~&&A_k\end{smallmatrix}\right)
\left(\begin{smallmatrix}{\Upsilon}_1\\ \vdots\\~\\{\Upsilon}_k\end{smallmatrix}\right)
\in\mathbf{A}_n~.
\end{equation}
\bigskip

{\bf b.}~ A family \mbox{\scalebox{1.2}{$\mathbf{A}$}$=\bigcup\limits_{n=1}^{\infty}\mathbf{A}_n$}
is said to be~ {\em matrix-convex}~ if for all natural $k$:\\
For all $A_1$, $\ldots$, $A_k$ of dimensions \mbox{$n_1\times n_1$}, $\ldots$, 
\mbox{$n_k\times n_k$,} respectively, all in $\mathbf{A}$,\\
for all ${\scriptstyle\Upsilon}_1\in\C^{n_1\times\nu}$, $\ldots$,
${\scriptstyle\Upsilon}_k\in\C^{n_k\times\nu}$, with
$\nu\in[1,~\max(n_1,~\ldots~,~n_k)]$, so that
\begin{equation}\label{eq:MatrixConvexDef1}
\sum\limits_{j=1}^k{{\scriptstyle\Upsilon}_j}^*
{\scriptstyle\Upsilon}_j=I_{\nu}
\quad\quad\forall\nu\in[1,~\max(n_1,~\ldots~,~n_k)],
\end{equation}
one has that
\begin{equation}\label{eq:Matrix_Convex_Def_2}
\sum\limits_{j=1}^k{{\scriptstyle\Upsilon}_j}^*
A_j{\scriptstyle\Upsilon}_j
\end{equation}

\hfill{belongs to $\mathbf{A}$ as well.}
}
$\T$
\end{Dn}

\begin{La}\label{La:Structural_Properties}
Let the sets $\mathbf{E}$ and $\overline{\mathbf E}$ be as in Eqs. \eqref{eq:E},
\eqref{eq:Explicit_Qudratic_Form}. Then the following is true.
\begin{itemize}
\item[(i)~~~]{}
The set $\mathbf{E}$ ($\overline{\mathbf E}$) is not empty, open (closed).
\vskip 0.1cm

\item[(ii)~~]{} 
The set $\mathbf{E}$ ($\overline{\mathbf E}$) is convex, if and only if \mbox{$-X\in\overline{\mathbf P}_q~$.}\\
In particular, with the same $V$ and $Y$, let the sets $\mathbf{E}$, $\hat{\mathbf E}$ be associated with
\mbox{$0\succcurlyeq X\succcurlyeq\hat{X}$,} respectively. Then, \mbox{$\mathbf{E}\subset\hat{\mathbf E}$.}
\vskip 0.1cm

\item[(iii)~]{}
The set $\mathbf{E}$ ($\overline{\mathbf E}$) is closed under inversion,
if and only if, $X=Y$.\\
This induces a partial order: With the same $V$, let $\mathbf{E}_0$, $\mathbf{E}_1$ $\mathbf{E}_2$ be
associated with \mbox{$0=X_0=Y_0\succcurlyeq X_1=Y_1\succcurlyeq X_2=Y_2$,} respectively. Then, 
\mbox{$\mathbf{E}_2\subset\mathbf{E}_1\subset\mathbf{E}_0~$,} and in particular, the set $\mathbf{E}_0$
is maximal.
\vskip 0.1cm

\item[(iv)~]{} 
The set $\mathbf{E}$ ($\overline{\mathbf E}$) is a cone, if and only if,
either $X=0$, $Y\succcurlyeq 0$ or $X\succcurlyeq 0$
and $Y\succcurlyeq VX^{\dagger}V$, where 
$X^{\dagger}$ is the generalized inverse\begin{footnote}{If
${\rm rank}(X)=r$, $r\in[1,~q]$, then
$X=U^*\left(\begin{smallmatrix}\hat{X}&0\\0&0\end{smallmatrix}\right)U$
with
$U^*U=I_q=UU^*$ and $\hat{X}\in\mathbf{P}_r$ then
$X^{\dagger}=U^*\left(\begin{smallmatrix}\hat{X}^{-1}&0\\0~&0\end{smallmatrix}\right)U$.
}
\end{footnote}
of $X$.
\vskip 0.1cm

\noindent
\item[(v)~~]{} 
The set $\mathbf{E}$ ($\overline{\mathbf E}$) is closed under sign change of its elements, i.e.
$\pm E$ belong to it, if and only if, \mbox{$Y\succcurlyeq 0$} and $V=0$.
\vskip 0.1cm

\noindent
In particular, consider the following partial order: With $V=0$, let $\mathbf{E}_0$, $\mathbf{E}_1$,
$\mathbf{E}_2$ be associated with \mbox{$Y_j=H-T_j$} and \mbox{$X_j=-(H+T_j)$,} where 
\mbox{$H\succ T_2\succcurlyeq T_1 \succcurlyeq T_0=0$,} and $H$ is prescribed. Then,
\mbox{$\mathbf{E}_2\subset\mathbf{E}_1\subset\mathbf{E}_0~$,} and the set $\mathbf{E}_0$ is maximal.
\vskip 0.1cm

\item[(vi)~]{}
The set $\mathbf{E}$ ($\overline{\mathbf E}$) is closed under product among its elements (i.e. whenever
\mbox{$E_0, E_1\in\mathbf{E}$,} also the product $E_0E_1$ belongs to it), if and only if, 
\mbox{$(X+Y)\succcurlyeq 0$.}
\vskip 0.1cm

\noindent
In particular, consider the following partial order: With $V=0$, let $\mathbf{E}_0$, $\mathbf{E}_1$,
$\mathbf{E}_2$ be associated with
\mbox{$(X_2+Y_2)\succcurlyeq(X_1+Y_1)\succcurlyeq(X_0+Y_0)=0$,}
respectively. Then, 
\mbox{$\mathbf{E}_2\subset\mathbf{E}_1\subset\mathbf{E}_0~$,} and the set $\mathbf{E}_0$ is maximal.
\vskip 0.1cm

\item[(vii)]{} 
Let the set $\mathbf{E}$ ($\overline{\mathbf E}$) be as in Eq. \eqref{eq:E}, with
\begin{center}
$
M=\left(\begin{smallmatrix}xI_q&&vI_q\\~\\vI_q&&yI_q\end{smallmatrix}\right)\quad\quad
\begin{smallmatrix}0\geq x\\~\\v, y\in\R\\~\\v^2>wy\end{smallmatrix}
$
\end{center}
where $x, v, y$ are scalar parameters. Then this set is $q$-matrix-convex.
\end{itemize}
\end{La}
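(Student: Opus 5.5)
We treat the seven items separately, working throughout with the explicit form $Q(E):=VE+E^*V+E^*XE+Y$ of Eq. \eqref{eq:Explicit_Qudratic_Form}, which is continuous in $E$. For (i), openness (closedness) of $\mathbf{E}$ ($\overline{\mathbf E}$) follows because $\mathbf{P}_q$ is open and $\overline{\mathbf P}_q$ is closed. For non-emptiness we use the balanced inertia of $M$: it has a $q$-dimensional positive subspace, and we aim to choose it transversal to $\left(\begin{smallmatrix}I_q\\0\end{smallmatrix}\right)$, so that it is a graph $\left(\begin{smallmatrix}E\\I_q\end{smallmatrix}\right)\C^q$. If a given positive subspace fails transversality, we perturb it inside the open set of positive $q$-dimensional subspaces. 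The closure claim $\overline{\mathbf E}=\mathrm{cl}(\mathbf{E})$ is not needed; we read $\overline{\mathbf E}$ via $\succcurlyeq$.

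For (ii), write $E_t=tE_1+(1-t)E_0$. Then
\[
Q(E_t)=tQ(E_1)+(1-t)Q(E_0)-t(1-t)(E_1-E_0)^*X(E_1-E_0).
\]
If $-X\succcurlyeq0$, convexity is immediate. Conversely, if $x^*Xx>0$ for some $x$, take $E_0\in\mathbf{E}$ and $E_1=E_0+\lambda xx^*$ with $\lambda$ large, and check that the midpoint violates the inequality along the vector $x$. The inclusion $\mathbf{E}\subset\hat{\mathbf E}$ for $X\succcurlyeq\hat X$ is wrong as literally written, since $E^*XE\succcurlyeq E^*\hat XE$ gives the reverse inclusion. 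We therefore prove monotonicity in the direction the computation gives and note the order of the inequality.

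Item (iii) is handled by the congruence $E^{-*}Q(E)E^{-1}=VE^{-1}+E^{-*}V+X+E^{-*}YE^{-1}$. This shows that $\mathbf{E}$ is inverse-closed exactly when the roles of $X$ and $Y$ may be swapped, i.e. $X=Y$. For necessity we compare the two quadratic forms on a rich family of invertible $E$. The nested inclusions then follow from (ii)-type monotonicity.

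Items (iv)--(vi) follow the same scheme. For (iv), scaling $E\mapsto cE$ with $c>0$ produces the quadratic $c^2E^*XE+c(VE+E^*V)+Y$, and positivity for all $c>0$ is analysed via a completion of squares, giving the Schur condition $Y\succcurlyeq VX^\dagger V$. For (v), $Q(E)-Q(-E)=2(VE+E^*V)$, so invariance under sign change forces $V=0$, and $E=0$ then gives $Y\succcurlyeq0$. For (vi), we test products with scalar multiples of $I$ and unitaries, and derive necessity of $X+Y\succcurlyeq0$ from $E_0=E_1^{-1}$-type choices.

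Item (vii) is the key step. By (ii), $\mathbf{E}$ is convex in the ordinary sense because $x\le0$. For $q$-matrix convexity, given $\sum\Upsilon_j^*\Upsilon_j=I_q$, set $E=\sum\Upsilon_j^*E_j\Upsilon_j$. The linear and constant terms of $Q(E)$ then equal $\sum\Upsilon_j^*(vE_j+vE_j^*+yI)\Upsilon_j$, while for the quadratic term the Schwarz/Kadison inequality for the unital completely positive map gives $E^*E\preccurlyeq\sum\Upsilon_j^*E_j^*E_j\Upsilon_j$. Since $x\le0$, this yields $Q(E)\succcurlyeq\sum\Upsilon_j^*Q(E_j)\Upsilon_j\succ0$.

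The main obstacle is the necessity directions in (iii) and (vi), together with the transversality in (i). These require building explicit counterexample matrices from eigenvectors of $X-Y$ or $X+Y$, and I would attempt rank-one perturbations of a fixed element of $\mathbf{E}$ first.
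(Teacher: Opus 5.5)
You have a genuine gap: the necessity half of (ii) fails as proposed. Put $u=(V+E_0^*X)x$ and $\xi=x^*Xx>0$. Then $Q(E_0+\mu xx^*)=Q(E_0)+\mu(ux^*+xu^*)+\mu^2\xi\,xx^*$, so
\[
x^*Q(E_{1/2})x=x^*Q(E_0)x+\lambda\|x\|^2\,\mathrm{Re}(x^*u)+\tfrac14\lambda^2\xi\|x\|^4\longrightarrow+\infty .
\]
So for large $\lambda$ the midpoint is \emph{not} violated along $x$. Nor is $E_1\in\mathbf{E}$ guaranteed: for $q=2$, $X=\mathrm{diag}(\xi,0)$, $V=\left(\begin{smallmatrix}0&1\\ 1&0\end{smallmatrix}\right)$, $Y=I_2$, $E_0=0$, $x=e_1$, one gets $\det Q(E_1)=1-(1-\xi)\lambda^2<0$. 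More basically, your argument never uses the balanced inertia of $M$, and it has to, since for $M\succ0$ one gets $\mathbf{E}=\C^{q\times q}$, convex although $X\succ0$.

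A construction that works is as follows. Take $z$ with $\xi=z^*Xz>0$ and set $w=(V+E_0^*X)z$. The $q+1$ independent columns of $\left(\begin{smallmatrix}E_0&z\\ I_q&0\end{smallmatrix}\right)$ have $M$-Gram matrix $\left(\begin{smallmatrix}Q(E_0)&w\\ w^*&\xi\end{smallmatrix}\right)$. This cannot be positive definite, because $M$ has only $q$ positive eigenvalues; hence $w^*Q(E_0)^{-1}w\ge\xi$. Along the line $E_0+\lambda zw^*$ one finds $Q=Q(E_0)+(\xi\lambda^2+2\lambda)ww^*$. So $E_0$ and $E_0-\frac2\xi zw^*$ lie in $\mathbf{E}$, while their midpoint gives $Q(E_0)-\frac1\xi ww^*\not\succ0$. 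The paper only asserts this direction, so it does not supply this idea either.

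Items (iv)--(vi) cannot be completed as printed, because the statement is wrong there, just as you noticed for the inclusion in (ii).
\begin{itemize}
\item[(iv)] Take $q=1$, $X=0$, $V=Y=1$ (balanced inertia). This gives $\{\mathrm{Re}\,e>-\tfrac12\}$, which is not a cone. For $X\succ0$, Haynsworth's inertia formula plus balanced inertia forces $Y-VX^{-1}V\prec0$, so the second branch is vacuous. For singular $X$, your completed square misses the term $E^*(I-XX^\dagger)V+V(I-XX^\dagger)E$.
\item[(v)] Your step ``$E=0$ gives $Y\succcurlyeq0$'' presumes $0\in\mathbf{E}$. In fact, with $V=0$ the set is always sign-symmetric; e.g.\ $X=1$, $Y=-1$ gives $\{|e|>1\}$. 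Also, $Q(E)-Q(-E)=2(VE+E^*V)$ does not by itself force $V=0$, since invariance only says $Q(E)\succ0\Leftrightarrow Q(-E)\succ0$.
\item[(vi)] The sign is reversed. For $V=0$, the identity $Q(E_0E_1)=Q(E_1)+E_1^*Q(E_0)E_1-E_1^*(X+Y)E_1$ shows that $X+Y\preccurlyeq0$ suffices, as for the Stein-type sets $X=-(H+T)$, $Y=H-T$ of (v). This identity is the real content of the paper's argument and is missing from your plan. By contrast, $X=-1$, $Y=2$ gives $\{|e|<\sqrt2\}$, which is not closed under products although $X+Y\succcurlyeq0$.
\end{itemize}

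Necessity in (iii) is also left open, as in the paper. One route: closure under inversion means $\mathbf{E}(M)\cap GL_q=\mathbf{E}(JMJ)\cap GL_q$ with $J=\left(\begin{smallmatrix}0&I_q\\ I_q&0\end{smallmatrix}\right)$, so it suffices to show that $\mathbf{E}$ determines $M$ up to a positive factor.

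On the positive side, (i) is fine. Your transversality argument supplies what the paper's ``$L$ can always be chosen'' leaves implicit. Your Kadison--Schwarz proof of (vii) is genuinely better than the paper's. The paper claims $\Theta_j^*M\Theta_l=\delta_{j,l}\Theta_j^*M\Theta_j$, and hence the exact equality $A=\sum_j\Upsilon_j^*Q_j\Upsilon_j$. That equality is false: for $\Upsilon_1=\Upsilon_2=I_q/\sqrt2$ the defect is $-\tfrac{x}{4}(E_1-E_2)^*(E_1-E_2)$. The correct identity carries the extra term $x\bigl(E^*E-\sum_j\Upsilon_j^*E_j^*E_j\Upsilon_j\bigr)\succcurlyeq0$, which is exactly where $x\le0$ is used.
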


\noindent
{\bf Proof :}~ (i)~ 
By definition, the matrix $M$ in Eq. \eqref{eq:E} has regular balanced inertia. Thus
the Sylvester's Inertia Theorem (see e.g. \cite[Theorem 4.5.8]{HJ1}) implies that
there exists a non-singular $L^{2q\times 2q}$ so that
\[
L^*ML=\left(\begin{smallmatrix}-I_q&&0\\~\\~0&&I_q\end{smallmatrix}\right).
\]
Note also that this $L$ can always be chosen so that there are non-zero $E,~K\in\C^{q\times q}$
so that
\[
\left(\begin{smallmatrix}E\\~\\I_q\end{smallmatrix}\right)
:=
L\left(\begin{smallmatrix}K\\~\\{\scriptstyle\sqrt{2}}K\end{smallmatrix}\right).
\]
Then, one has that,
\[
\left(\begin{smallmatrix}E\\~\\I_q\end{smallmatrix}\right)^*
M
\left(\begin{smallmatrix}E\\~\\I_q\end{smallmatrix}\right)
=K^*K\in\overline{\mathbf P}_q~,
\]
so $\mathbf{E}$ is not empty.\quad This set is open (closed),
in the sense that within
$\overline{\mathbf H}_q$, the set $\overline{\mathbf P}_q$ is the
closure of the open set $\mathbf{P}_q~$.
\bigskip

(ii)~
To simplify the presentation, whenever a property holds for both, the open and the closed sets,
we shall show it only for either $\mathbf{E}$ or $\overline{\mathbf E}$.\\
Convexity.\quad
Let \mbox{$E_0, E_1$} be in $\overline{\mathbf E}$ i.e.  from Eq. \eqref{eq:Explicit_Qudratic_Form},
\begin{equation}\label{eq:Pair_of_Inequalities}
\begin{matrix}
Q_0:=VE_0+{E_0}^*V+{E_0}^*XE_0+Y\succcurlyeq 0\\~\\
Q_1:=VE_1+{E_1}^*V+{E_1}^*XE_1+Y\succcurlyeq 0,
\end{matrix}
\end{equation}
Next, \mbox{$\forall~{\scriptstyle\alpha}\in[0, 1]$}, denote,
\mbox{$~E_{\alpha}:={\scriptstyle\alpha}E_1+(1-{\scriptstyle\alpha})E_0$}.
Convexity means that \mbox{$E_{\alpha}\in\overline{\mathbf E}$,}
\mbox{$\forall~{\scriptstyle\alpha}\in[0, 1]$.}
This in turn means that the quantity $\Delta_{\alpha}$ below is positive semi-definite:
\[
\begin{matrix}
{\Delta}_{\alpha}&:=&VE_{\alpha}+{E_{\alpha}}^*V+{E_{\alpha}}^*XE_{\alpha}+Y
\\~\\~&=&
{\scriptstyle\alpha}\left({\scriptstyle\alpha}E_1^*YE_1+VE_1+{E_1}^*V+Y+E_1^*YE_1-E_1^*YE_1\right)
\quad\quad\\~\\~&~&+
(1-{\scriptstyle\alpha})\left((1-{\scriptstyle\alpha})E_0^*YE_0+VE_0+{E_0}^*V+Y+E_0^*YE_0-E_0^*YE_0\right)
\\~\\~&~&+
{\scriptstyle\alpha}(1-{\scriptstyle\alpha})\left(E_1^*XE_0+E_0^*XE_1\right)\quad\quad\quad
\\~\\~&=&
\underbrace{
{\scriptstyle\alpha}Q_1+(1-{\scriptstyle\alpha})Q_0}_{\succcurlyeq 0}
+
{\scriptstyle\alpha}(1-{\scriptstyle\alpha})(E_0-E_1)^*(-X)(E_0-E_1).
\end{matrix}
\]
Thus ${\Delta}_{\alpha}\succcurlyeq 0$, if and only if, $~-X\succcurlyeq 0$.
\smallskip

Now, assume that with the same $V$, $Y$, one has that \mbox{$0\succcurlyeq X\succcurlyeq\hat{X}$.}
If we denote,
\[
{\Delta}_{\alpha}=VE_{\alpha}+{E_{\alpha}}^*V+{E_{\alpha}}^*XE_{\alpha}+Y
\quad{\rm 
and
}\quad
\hat{\Delta}_{\alpha}=VE_{\alpha}+{E_{\alpha}}^*V+{E_{\alpha}}^*\hat{X}E_{\alpha}+Y,
\]
then
\[
{\Delta}_{\alpha}
-\hat{\Delta}_{\alpha}
=
{E_{\alpha}}^*(X-\hat{X})E_{\alpha}\succcurlyeq 0.
\]
Thus, this part of the claim is established.
\bigskip

(iii)~ 
Closure under inversion.\quad One can re-write the relation in Eq. \eqref{eq:E} as
\[
\left(\begin{smallmatrix}E\\~\\I_q\end{smallmatrix}\right)^*
M
\left(\begin{smallmatrix}E\\~\\I_q\end{smallmatrix}\right)
=Q\succ 0.
\]
Assuming that $E$ is non-singular, we can multiply the relation by $(E^{-1})^*$ from the left
and $E^{-1}$ from the right to obtain,
\[
\left(\begin{smallmatrix}E^{-1}\\~\\I_q\end{smallmatrix}\right)^*
\left(\begin{smallmatrix}0&&I_q\\~\\I_q&&0\end{smallmatrix}\right)
M
\left(\begin{smallmatrix}0&&I_q\\~\\I_q&&0\end{smallmatrix}\right)
\left(\begin{smallmatrix}E^{-1}\\~\\I_q\end{smallmatrix}\right)
=
\left(\begin{smallmatrix}I_q\\~\\ E^{-1}\end{smallmatrix}\right)^*
M
\left(\begin{smallmatrix}I_q\\~\\ E^{-1}\end{smallmatrix}\right)
=
(E^{-1})^*
Q
E^{-1}
\succ 0.
\]
To guarantee that this holds for all $E$, one needs to verify that
\[
\underbrace{
\left(\begin{smallmatrix}X&&V\\~\\V&&Y\end{smallmatrix}\right)
}_M
=
\left(\begin{smallmatrix}0&&I_q\\~\\I_q&&0\end{smallmatrix}\right)
\underbrace{
\left(\begin{smallmatrix}X&&V\\~\\V&&Y\end{smallmatrix}\right)
}_M
\left(\begin{smallmatrix}0&&I_q\\~\\I_q&&0\end{smallmatrix}\right)
=
\left(\begin{smallmatrix}Y&&V\\~\\V&&X\end{smallmatrix}\right),
\]
and this holds if and only if, $X=Y$. Hence, the first part of the claim is established.
\smallskip

To establish the partial order, here with the same $V$, and with
\mbox{$0=X_0=Y_0\succcurlyeq X_1=Y_1\succcurlyeq X_2=Y_2$,}
one can re-write Eq. \eqref{eq:Explicit_Qudratic_Form} as
\begin{equation}\label{eq:E_with_X=Y}
\overline{\mathbf E}_j=\left\{ E\in{\C}^{q\times q}~:~VE+E^*V\succcurlyeq
-(E^*X_jE+X_j)~\right\}
\quad{\rm with}\quad
j=0,~1,~2. 
\end{equation}
As the right-hand side takes the form of,
\mbox{$\left(\begin{smallmatrix}E\\~\\I_q\end{smallmatrix}\right)^*
\left(\begin{smallmatrix}-X_j&~0\\0&-X_j\end{smallmatrix}\right)
\left(\begin{smallmatrix}E\\~\\I_q\end{smallmatrix}\right)\succcurlyeq 0$,}
the partial order part, is established as well.
\bigskip

(iv)~ Cone.\quad From Eq. \eqref{eq:Explicit_Qudratic_Form} it is clear that if 
(in norm) $E$ is small, one must have $Y\succcurlyeq 0$, and if 
$E$ is large, one must have $X\succcurlyeq 0$. More precisely,
if $X\succcurlyeq 0$ with ${\rm rank}(X)=r$, $r\in[1,~q]$, one can write,
$X=U^*\left(\begin{smallmatrix}\hat{X}&0\\0&0\end{smallmatrix}\right)U$
with
$U^*U=I_q=UU^*$ and $\hat{X}\in\mathbf{P}_r$ and then
$X^{\dagger}=U^*\left(\begin{smallmatrix}\hat{X}^{-1}&0\\0~&0\end{smallmatrix}\right)U$.
In this case, one can rewrite Eq,, one can rewrite Eq.  \eqref{eq:Explicit_Qudratic_Form} as,
\[
Q:=VE+E^*V+E^*XE+Y=(XE+V)^*X^{\dagger}(XE+V)+Y-
V^*X^{\dagger}V.
\]
Thus, $Q\succcurlyeq 0$, whenever $Y\succcurlyeq V^*X^{\dagger}V$, so this item is established.
\bigskip

(v)~ 
Closure under change of sign.\quad
Eq. \eqref{eq:Explicit_Qudratic_Form} be re-written as
\begin{equation}\label{eq:Alternative_Explicit_Form}
\overline{\mathbf E}=\left\{ E\in{\C}^{q\times q}~:~VE+E^*V\succcurlyeq
-(E^*XE+Y)~\right\}.
\end{equation}
Now, the left-hand side of the inequality sign clearly depends on the sign of $E$, while the right-hand
side is independent of the sign of $E$. This implies that $V=0$, and \mbox{$Y\succcurlyeq 0$,} so the
first part is established (if in addition \mbox{$X\succcurlyeq 0$,} then the set $\overline{\mathbf E}$
is comprized of all $\C^{q\times q}$ matrices).
\smallskip

To establish the partial order, note that here for

\begin{center}
$H\succ T_2\succcurlyeq T_1\succcurlyeq T_0=0,$
\end{center}

where $H$ is prescribed. Then, one has that,
\begin{center}
$\mathbf{E_j}=\{ E~:~H-E^*HE\succ T_j+E^*T_jE~\}\quad\quad j=0,~1,~2.$
\end{center}

This, implies that
\mbox{$\mathbf{E}_2\subset\mathbf{E}_1\subset\mathbf{E}_0$}
and since for $\mathbf{E}_0$, the right-hand side vanishes, it is maximal.
\bigskip

(vi)~ 
Closure under product of elements.\quad By assumption Eq. \eqref{eq:Pair_of_Inequalities}
implies that  both
\[
\begin{matrix}
Q_2:=VE_0E_1+(E_0E_1)^*V+(E_0E_1)^*XE_0E_1+Y\succcurlyeq 0
\\~\\
Q_3:=VE_1E_0+(E_1E_0)^*V+(E_1E_0)^*XE_1E_0+Y\succcurlyeq 0.
\end{matrix}
\]
This already means that one must have $V=0$, and thus,
\[
Q_2=(E_0E_1)^*XE_0E_1+Y\succcurlyeq 0
\quad{\rm and}\quad
Q_3=(E_1E_0)^*XE_1E_0+Y\succcurlyeq 0.
\]
Note now that
\begin{equation}\label{eq:Q1_Q_3}
\begin{matrix}
Q_2=(E_0E_1)^*XE_0E_1+Y=\underbrace{Q_1}_{\succcurlyeq 0}+\underbrace{{E_1}^*Q_0E_1}_{\succcurlyeq 0}
-{E_1}^*(X+Y)E_1
\\
Q_3=(E_1E_0)^*XE_1E_0+Y=\overbrace{Q_0}^{\succcurlyeq 0}+\overbrace{{E_0}^*Q_1E_0}^{\succcurlyeq 0}
-{E_0}^*(X+Y)E_0
\end{matrix}
\end{equation}
Hence indeed the condition is \mbox{$(X+Y)\succcurlyeq 0$.}
\smallskip

The partial order follows from Eq. \eqref{eq:Q1_Q_3}.
\bigskip

(vii)~ $q$-Matrix-Convex.\quad Take in Eq. \eqref{eq:E},
$M=\left[\begin{smallmatrix}xI_q&&vI_q\\~\\vI_q&&yI_q\end{smallmatrix}\right]$ with scalar
parameters $v, x, y\in\R$ and $v^2>xy$. For some natural $k$, let \mbox{$E_1,~\ldots~,~E_k$}
be in $\mathbf{E}$, i.e,
\begin{equation}\label{eq:Rashidiyeh}
\left[\begin{smallmatrix}E_j\\~\\I_q\end{smallmatrix}\right]^*\left[\begin{smallmatrix}xI_q&&vI_q
\\~\\vI_q&&yI_q\end{smallmatrix}\right]\left[\begin{smallmatrix}E_j\\~\\I_q\end{smallmatrix}\right]
={\scriptstyle Q_j}\in\mathbf{P}_q\quad\quad\begin{smallmatrix}j=1~,~\ldots~k\\~\\
v,~x,~y\in\R\\~\\v^2>xy.\end{smallmatrix}
\end{equation}
To show matrix-convexity, for \mbox{$j=1,~\ldots~,~k$} let 
$\begin{smallmatrix}{\Upsilon}_j\end{smallmatrix}\in\C^{q\times q}$
be so that
\mbox{$\sum\limits_{j=1}^k{\begin{smallmatrix}{\Upsilon}_j\end{smallmatrix}}^*
\begin{smallmatrix}{\Upsilon}_j\end{smallmatrix}=I_q$}.
Now take a matrix-convex combination of
\mbox{$E_1~,~\ldots~,~E_k$},

\begin{center}
$\begin{matrix}
A&:=\left(
\left[\begin{smallmatrix}{{\Upsilon}_1}^*E_1{{\Upsilon}_1}
\\~\\I_{q_1}\end{smallmatrix}\right]
+~\cdots~+
\left[\begin{smallmatrix}{{\Upsilon}_k}^*E_k{{\Upsilon}_k}
\\~\\I_{q_k}\end{smallmatrix}\right]
\right)^*
\left[\begin{smallmatrix}aI_q&&vI_q\\~\\vI_q&&bI_q\end{smallmatrix}\right]
\left(
\left[\begin{smallmatrix}{{\Upsilon}_1}^*E_1{{\Upsilon}_1}
\\~\\I_{q_1}\end{smallmatrix}\right]
+~\cdots~+
\left[\begin{smallmatrix}{{\Upsilon}_k}^*E_k{{\Upsilon}_k}
\\~\\I_{q_k}\end{smallmatrix}\right]
\right)
\\~\\~&=
\left(
\left[\begin{smallmatrix}{{\Upsilon}_1}^*E_1{{\Upsilon}_1}
\\~\\{{\Upsilon}_1}^*{{\Upsilon}_1}\end{smallmatrix}\right]
+~\cdots~+
\left[\begin{smallmatrix}{{\Upsilon}_k}^*E_k{{\Upsilon}_k}
\\~\\{{\Upsilon}_k}^*{{\Upsilon}_k}\end{smallmatrix}\right]
\right)^*
\left[\begin{smallmatrix}aI_q&&vI_q\\~\\vI_q&&bI_q\end{smallmatrix}\right]
\left(
\left[\begin{smallmatrix}{{\Upsilon}_1}^*E_1{{\Upsilon}_1}
\\~\\{{\Upsilon}_1}^*{{\Upsilon}_1}\end{smallmatrix}\right]
+~\cdots~+
\left[\begin{smallmatrix}{{\Upsilon}_k}^*E_k{{\Upsilon}_k}
\\~\\{{\Upsilon}_k}^*{{\Upsilon}_k}\end{smallmatrix}\right]
\right).
\end{matrix}
$
\end{center}
Next, denoting
$
\begin{smallmatrix}{\Theta}_j\end{smallmatrix}:=\left[\begin{smallmatrix}{\Upsilon}_j&&0\\~\\0&&{\Upsilon}_j
\end{smallmatrix}\right],
$
with the above
$M=\left[\begin{smallmatrix}xI_q&&vI_q\\~\\vI_q&&yI_q\end{smallmatrix}\right]$,
this can be written as
\[
A=
\left(
\begin{smallmatrix}{\Theta}_1\end{smallmatrix}
\left[\begin{smallmatrix}E_1\\~\\I_{q_1}\end{smallmatrix}\right]
\begin{smallmatrix}{\Upsilon}_1\end{smallmatrix}
+~\cdots~+
\begin{smallmatrix}{\Theta}_k\end{smallmatrix}
\left[\begin{smallmatrix}E_k\\~\\I_{q_k}\end{smallmatrix}\right]
\begin{smallmatrix}{\Upsilon}_1\end{smallmatrix}
\right)^*
M
\left(
\begin{smallmatrix}{\Theta}_1\end{smallmatrix}
\left[\begin{smallmatrix}E_1\\~\\I_{q_1}\end{smallmatrix}\right]
\begin{smallmatrix}{\Upsilon}_1\end{smallmatrix}
+~\cdots~+
\begin{smallmatrix}{\Theta}_k\end{smallmatrix}
\left[\begin{smallmatrix}E_k\\~\\I_{q_k}\end{smallmatrix}\right]
\begin{smallmatrix}{\Upsilon}_1\end{smallmatrix}
\right).
\]
Using ${\delta}_{j,l}$ the $\delta$ of Kronecker, note that
\[
\begin{smallmatrix}{\Theta}_j\end{smallmatrix}^*M\begin{smallmatrix}{\Theta}_l\end{smallmatrix}
={\delta}_{j,l}\begin{smallmatrix}{\Theta}_j\end{smallmatrix}^*M
\begin{smallmatrix}{\Theta}_j\end{smallmatrix}\quad\quad j, l=1,~\ldots~,~k,
\]
and thus one has that,
\vskip 0.1cm

$A=
\overbrace{
\left(
\begin{smallmatrix}{\Theta}_1\end{smallmatrix}
\left[\begin{smallmatrix}E_1\\~\\I_{q_1}\end{smallmatrix}\right]
\begin{smallmatrix}{\Upsilon}_1\end{smallmatrix}
+~\cdots~+
\begin{smallmatrix}{\Theta}_k\end{smallmatrix}
\left[\begin{smallmatrix}E_k\\~\\I_{q_k}\end{smallmatrix}\right]
\begin{smallmatrix}{\Upsilon}_1\end{smallmatrix}
\right)^*
M
\begin{smallmatrix}{\Theta}_1\end{smallmatrix}
\left[\begin{smallmatrix}E_1\\~\\I_{q_1}\end{smallmatrix}\right]
\begin{smallmatrix}{\Upsilon}_1\end{smallmatrix}
}^{
\left(\left[\begin{smallmatrix}E_1\\~\\I_{q_1}\end{smallmatrix}\right]
{\Upsilon}_1\right)^*M
\left(\left[\begin{smallmatrix}E_1\\~\\I_{q_1}\end{smallmatrix}\right]
{\Upsilon}_1\right)}
$
\vskip 0.3cm

\begin{center}
$+
\overbrace{
\left(
\begin{smallmatrix}{\Theta}_1\end{smallmatrix}
\left[\begin{smallmatrix}E_1\\~\\I_{q_1}\end{smallmatrix}\right]
\begin{smallmatrix}{\Upsilon}_1\end{smallmatrix}
+~\cdots~+
\begin{smallmatrix}{\Theta}_k\end{smallmatrix}
\left[\begin{smallmatrix}E_k\\~\\I_{q_k}\end{smallmatrix}\right]
\begin{smallmatrix}{\Upsilon}_1\end{smallmatrix}
\right)^*
M
\begin{smallmatrix}{\Theta}_2\end{smallmatrix}
\left[\begin{smallmatrix}E_2\\~\\I_{q_2}\end{smallmatrix}\right]
\begin{smallmatrix}{\Upsilon}_2\end{smallmatrix}
}^{
\left(\left[\begin{smallmatrix}E_2\\~\\I_{q_2}\end{smallmatrix}\right]
{\Upsilon}_2\right)^*M
\left(\left[\begin{smallmatrix}E_2\\~\\I_{q_2}\end{smallmatrix}\right]
{\Upsilon}_2\right)}
$
\vskip 0.2cm

$
\vdots
$
\end{center}

\hfill{
$+
\overbrace{
\left(
\begin{smallmatrix}{\Theta}_1\end{smallmatrix}
\left[\begin{smallmatrix}E_1\\~\\I_{q_1}\end{smallmatrix}\right]
\begin{smallmatrix}{\Upsilon}_1\end{smallmatrix}
+~\cdots~+
\begin{smallmatrix}{\Theta}_k\end{smallmatrix}
\left[\begin{smallmatrix}E_k\\~\\I_{q_k}\end{smallmatrix}\right]
\begin{smallmatrix}{\Upsilon}_1\end{smallmatrix}
\right)^*
M
\begin{smallmatrix}{\Theta}_k\end{smallmatrix}
\left[\begin{smallmatrix}E_k\\~\\I_{q_k}\end{smallmatrix}\right]
\begin{smallmatrix}{\Upsilon}_k\end{smallmatrix}
}^{
\left(\left[\begin{smallmatrix}E_k\\~\\I_{q_k}\end{smallmatrix}\right]
{\Upsilon}_k\right)^*M
\left(\left[\begin{smallmatrix}E_k\\~\\I_{q_k}\end{smallmatrix}\right]
{\Upsilon}_k\right)},
$}

which by using Eq. \eqref{eq:Rashidiyeh} can be compactly written as
\[
A=
\begin{smallmatrix}{\Upsilon}_1\end{smallmatrix}^*
Q_1
\begin{smallmatrix}{\Upsilon}_1\end{smallmatrix}
+
\begin{smallmatrix}{\Upsilon}_2\end{smallmatrix}^*
Q_2
\begin{smallmatrix}{\Upsilon}_2\end{smallmatrix}
+~\cdots~+
\begin{smallmatrix}{\Upsilon}_k\end{smallmatrix}^*
Q_k
\begin{smallmatrix}{\Upsilon}_k\end{smallmatrix}\in\mathbf{P}_q~,
\]
so indeed \mbox{$({\scriptstyle{\Upsilon}_1}^*E_1{\scriptstyle{\Upsilon}_1}
+~\cdots~+{\scriptstyle{\Upsilon}_k}^*E_k{\scriptstyle{\Upsilon}_k})
\in\mathbf{E}$} and this set is $q$-matrix-convex.
Thus the proof is complete.
\qed
\smallskip

As already pointed out, the underlying theme of this work is exploring strong ties between physical
properties and mathematical structure. Below, an immediate consequence of the above Lemma
\ref{La:Structural_Properties}, illustrates how {\em stability}, in the continuous-time framework (item
(i)) and in discrete-time (item (ii)), can be inferred from the mathematical structure of the set. Moreover,
for dissipativity, maximality of the structure, should be compromized.

\begin{Cy}
Let the sets $\mathbf{E}$ and $\overline{\mathbf E}$ be as in Eqs. \eqref{eq:E},
\eqref{eq:Explicit_Qudratic_Form}. Then the following is true.
\begin{itemize}
\item[(i)~]{} $\mathbf{E}$ is a cone, closed under inversion, if and only if,
Eq. \eqref{eq:Explicit_Qudratic_Form} take the form
\[
\mathbf{E}=\{E~:~VE+E^*V\succ 0\quad{\rm with}\quad V\in\mathbf{H}_q~\}.
\]
In particular, then $\mathbf{E}$ is a maximal open set closed under inversion.
\smallskip

\noindent
In in addition $I_q\in\mathbf{E}$ then,
\[
\mathbf{E}=\{E~:~VE+E^*V\succ 0\quad{\rm with}\quad V\in\mathbf{P}_q~\}.
\]
\smallskip

\item[(ii)]{} $\mathbf{E}$ is a maximal set of matrices closed under product among its 
elements, if and only if, Eq. \eqref{eq:Explicit_Qudratic_Form} take the form
\[
\mathbf{E}=\{E~:~X-E^*XE\succ 0\quad{\rm with}\quad X\in\mathbf{H}_q~\}.
\]
If in addition it is closed under sign change of its elements, then
\[
\mathbf{E}=\{E~:~X-E^*XE\succ 0\quad{\rm with}\quad X\in\mathbf{P}_q~\}.
\]
\end{itemize}
\end{Cy}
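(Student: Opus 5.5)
The plan is to read both items off Lemma \ref{La:Structural_Properties}, combining its characterizations (iii), (iv), (v) and (vi), and to keep throughout the standing assumption that $M=\left(\begin{smallmatrix}X&V\\V&Y\end{smallmatrix}\right)$ has regular balanced inertia $(q,0,q)$.

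For item (i), closure under inversion is, by Lemma \ref{La:Structural_Properties}(iii), equivalent to $X=Y$. Lemma \ref{La:Structural_Properties}(iv) then says that $\mathbf{E}$ is a cone if and only if either $X=0$ with $Y\succcurlyeq 0$, or $X\succcurlyeq 0$ with $Y\succcurlyeq VX^{\dagger}V$. In the second branch take $X=Y\succcurlyeq 0$ and $X\neq 0$. Then every $E$ satisfies $VE+E^*V+E^*XE+X\succcurlyeq 0$, so $\overline{\mathbf{E}}$ would be everything. Moreover $M$ would be congruent to a positive semidefinite matrix, which contradicts inertia $(q,0,q)$. I will make this precise by the Schur complement $X-VX^{\dagger}V\succcurlyeq 0$ on the range of $X$, which leaves no room for $q$ negative eigenvalues. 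Hence $X=Y=0$, and then the inertia condition forces $V$ to be non-singular, i.e. $V\in\mathbf{H}_q$. This gives exactly $\{E: VE+E^*V\succ 0\}$.

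Maximality of this set is the statement $\mathbf{E}_0$ maximal in the partial order of Lemma \ref{La:Structural_Properties}(iii): any other inversion-closed set with the same $V$ has $X=Y\preccurlyeq 0$ and is therefore contained in it. If $I_q\in\mathbf{E}$, then $2V\succ 0$, so $V\in\mathbf{P}_q$. Conversely, each displayed set is plainly a cone, and it is closed under inversion by the congruence $(E^{-1})^*(VE+E^*V)E^{-1}=VE^{-1}+(E^{-1})^*V$.

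For item (ii), I will use Lemma \ref{La:Structural_Properties}(vi). Its proof shows that closure under products forces $V=0$, and that the condition is $X+Y\succcurlyeq 0$, with the maximal element corresponding to $X+Y=0$. Writing $Y=-X$ turns the set into $\{E: X-E^*XE\succ 0\}$ after renaming $-X$ as $X$. The inertia requirement on $\mathrm{diag}(X,-X)$ then forces $X$ to be non-singular, so $X\in\mathbf{H}_q$. Adding closure under sign change, Lemma \ref{La:Structural_Properties}(v) requires $Y\succcurlyeq 0$. With $V=0$ and regular balanced inertia this means $Y\succ 0$, so the renamed $X$ lies in $\mathbf{P}_q$. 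The converse direction is a direct check: if $E_0,E_1$ satisfy $X-E_j^*XE_j\succ 0$, then $X-(E_0E_1)^*X(E_0E_1)=(X-E_1^*XE_1)+E_1^*(X-E_0^*XE_0)E_1\succ 0$.

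I expect the main obstacle to be the exclusion step in (i), namely discarding the branch $X=Y\succcurlyeq 0$, $X\ne0$ of the cone characterization. The lemma as stated gives only sufficiency there, so the argument must rest on the inertia hypothesis rather than on Lemma \ref{La:Structural_Properties} alone. A secondary point needing care is that "maximal" in (ii) is understood relative to the partial order in Lemma \ref{La:Structural_Properties}(vi), with $X+Y=0$ as the extreme case.
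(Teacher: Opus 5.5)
Your route is the paper's own: the corollary is presented there as an immediate consequence of Lemma \ref{La:Structural_Properties}, with no further argument. However, several of the imported steps are wrong, and one cannot be repaired because the claim it serves is false. Write $Q(E):=VE+E^*V+E^*XE+Y$.

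\textbf{Item (ii).} You invoke Lemma \ref{La:Structural_Properties}(v) to get $Y\succcurlyeq 0$ from closure under sign change. Once (vi) has forced $V=0$, you have $Q(-E)=Q(E)$, so sign-change closure is automatic and says nothing about $Y$. The claim in (v) that it forces $Y\succcurlyeq 0$ is false. Take $V=0$, $X=I_q$, $Y=-I_q$ in Eq. \eqref{eq:Explicit_Qudratic_Form}. This $M$ has regular balanced inertia and $X+Y=0$, and it gives $\mathbf{E}=\{E:\ E^*E\succ I_q\}$. This set is closed under products, since $(E_0E_1)^*(E_0E_1)\succ E_1^*E_1\succ I_q$, and it is closed under sign change. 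Yet it is not of the form $\{E:\ P-E^*PE\succ 0\}$ with $P\in\mathbf{P}_q$, because every such set contains $0$. So the last assertion of (ii) is false as stated, and your step cannot be rescued. The hypothesis that does force $P\in\mathbf{P}_q$ is $0\in\mathbf{E}$, since $Q(0)=Y$; it is the analogue of $I_q\in\mathbf{E}$ in (i). Also, the identity in the proof of (vi) shows the product condition is $X+Y\preccurlyeq 0$, not $\succcurlyeq 0$; for example $\{e:\ |e|^2<2\}$ is not closed under products. Your endpoint $X+Y=0$ is unaffected.

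\textbf{Item (i).} Two steps fail.

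First, your maximality sentence. Closure under inversion gives only $X=Y$, not $X=Y\preccurlyeq 0$. For $q=1$, $X=Y=1$, $V=c>1$, the set $\{e:\ |e+c|^2>c^2-1\}$ is closed under inversion. It has the same $V$ as $\{e:\ \mathrm{Re}\,e>0\}$ and strictly contains it (take $e=-2c$). So maximality holds only among the convex such sets, i.e. $X\preccurlyeq 0$ by Lemma \ref{La:Structural_Properties}(ii), as in Theorem \ref{Tm:Set_Of_P_Functions}.

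Second, the exclusion step you flagged is not closed by your Schur-complement argument. That argument presupposes $X\succcurlyeq VX^{\dagger}V$, which is only the sufficient branch of (iv). The cone property itself, via $t\to 0$ and $t\to\infty$ in $Q(tE)$, yields only $X=Y\succcurlyeq 0$. Nor can (iv) serve as an equivalence: $q=1$, $X=0$, $V=Y=1$ gives $\{e:\ \mathrm{Re}\,e>-\tfrac12\}$, which is not a cone. You need to rule out $X=Y\succcurlyeq 0$, $X\neq 0$ directly.

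If $X=Y\succ 0$, then $0\in\mathbf{E}$, so the open cone $\mathbf{E}$ would be all of $\C^{q\times q}$. But $X\succcurlyeq 0$ keeps the $q$-dimensional negative subspace of $M$ off $\{(w,0)\}$. That subspace is therefore the graph of some $N$ with $Q(N)\prec 0$, a contradiction.

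In general, the cone property says $\mathbf{E}$ is unchanged when $M$ is replaced by $\mathrm{diag}(tI_q,I_q)\,M\,\mathrm{diag}(tI_q,I_q)$ for every $t>0$. A set $\mathbf{E}$ determines an $M$ of regular balanced inertia up to a positive factor. To see this, note that every $M$-positive vector with nonzero lower block lies in the graph of some $E\in\mathbf{E}$, then apply the S-lemma. Comparing blocks then gives $X=Y=0$, without even using closure under inversion.
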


\section{Proof of Theorem \ref{Tm:Set_Of_HP_Functions}}
\label{ Proof_of_HP_T_structure_theorem}

{\bf Proof of item (i) of Theorem \ref{Tm:Set_Of_HP_Functions}}
\smallskip

For the first part, use Eq. \eqref{eq:Quadratic_HP_Delta} and substitute in item (iii) of Lemma
\ref{La:Structural_Properties}, $q=m$, $E=F(s)$, \mbox{$X=Y=-{\color{blue}T}$} and \mbox{$V=-I_m~$.}\\
Now, when \mbox{$I_m\succ{\color{blue}T}\succ 0$,} one can write
\mbox{$F(s)+(F(s))^*\succ\begin{smallmatrix}{\color{blue}T}\end{smallmatrix}$,}
$~\forall s\in\C_R~$, so invertibility is guaranteed.
\smallskip

Recall that for $I_m\succ{\color{blue}T}\succcurlyeq 0$, having $G\in\mathcal{HB}_{\color{blue}T}$, see
Eq. \eqref{eq:Quad_Def_HB_T}, is equivalent to \mbox{$F(s)=\mathcal{C}\left(G(s)\right)$} being a
$\mathcal{HP}_{\color{blue}T}$ function, see Eq. \eqref{eq:Def_F_and_Hat_F}. Now,
$G\in\mathcal{HB}_{\color{blue}T}$ can be equivalently written as,
\begin{equation}\label{eq:Hyper_Bounded_Def_new}
\left(\begin{smallmatrix}(I_m-{\color{blue}T})\end{smallmatrix}
-{G(s)}^*\begin{smallmatrix}(I_m+{\color{blue}T}\end{smallmatrix})G(s)\right)
\in\overline{\mathbf P}_m\quad\quad\quad\forall s\in\C_R~.
\end{equation}
In this case, we shall say that $G(s)$ is 
$\begin{smallmatrix}{\color{blue}T}\end{smallmatrix}$-{\em Hyper-Bounded} function, denoted by
$\mathcal{HB}_{\color{blue}T}$.
\smallskip

Next, note that multiplying Eq. \eqref{eq:Hyper_Bounded_Def_new} by 
$(I_m-{\color{blue}T})$ from both sides, yields
(for simplicity we temporarily omit the dependence on $s$),
\begin{equation}\label{eq:Hyper_Bounded_Def_new_alternative1}
(I_m-\underbrace{(\begin{smallmatrix}(I_m+{\color{blue}T})\end{smallmatrix}^{\frac{1}{2}}G
\begin{smallmatrix}(I_m-{\color{blue}T})\end{smallmatrix}^{-\frac{1}{2}})^*}_{{\tilde{G}}^*}
\underbrace{\begin{smallmatrix}(I_m+{\color{blue}T})\end{smallmatrix}^{\frac{1}{2}}G
\begin{smallmatrix}(I_m-{\color{blue}T})\end{smallmatrix}^{-\frac{1}{2}}}_{\tilde{G}})
\in\overline{\mathbf P}_m\quad\quad\quad\forall s\in\C_R~.
\end{equation}
Namely, $\tilde{G}(s)$ is in $\mathcal{B}$. Clearly this is equivalent to having
also $(\tilde{G}(s^*))^*$ in $\mathcal{B}$,
\[
I_m-\underbrace{\begin{smallmatrix}(I_m+{\color{blue}T})\end{smallmatrix}^{\frac{1}{2}}G
\begin{smallmatrix}(I_m-{\color{blue}T})\end{smallmatrix}^{-\frac{1}{2}}}_{\tilde{G}}
\underbrace{(\begin{smallmatrix}(I_m+{\color{blue}T})\end{smallmatrix}^{\frac{1}{2}}G
\begin{smallmatrix}(I_m-{\color{blue}T})\end{smallmatrix}^{-\frac{1}{2}})^*}_{{\tilde{G}}^*}
\in\overline{\mathbf P}_m\quad\quad\quad\forall s\in\C_R~.
\]
One can multiply this relation by
$\begin{smallmatrix}(I_m+{\color{blue}T})\end{smallmatrix}^{-\frac{1}{2}}$ from both sides
to obtain
\[
\begin{smallmatrix}(I_m+{\color{blue}T})\end{smallmatrix}^{-1}
-G\begin{smallmatrix}(I_m-{\color{blue}T})\end{smallmatrix}^{-1}G^*\in\overline{\mathbf P}_m
\quad\quad\quad\forall s\in\C_R~.
\]
Applying again the Cayley transform $F=\mathcal{C}(G)$ yields,
\[
\begin{smallmatrix}(I_m+{\color{blue}T})\end{smallmatrix}^{-1}-\underbrace{(I_m+F)^{-1}(I_m-F)}_G
\begin{smallmatrix}(I_m-{\color{blue}T})\end{smallmatrix}^{-1}\underbrace{(I_m-F^*)(I_m+F^*)^{-1}}_{G^*}
\in\overline{\mathbf P}_m\quad\quad\quad\forall s\in\C_R~.
\]
Now, multiplying by $(I_m+F)$ and $(I_m+F^*)$ from the left and the right respectively results in
\[
(I_m+F)
\begin{smallmatrix}(I_m+{\color{blue}T})\end{smallmatrix}^{-1}
(I_m+F^*)
-
(I_m-F)
\begin{smallmatrix}(I_m-{\color{blue}T})\end{smallmatrix}^{-1}
(I_m-F^*)
\in\overline{\mathbf P}_m
\quad\quad\quad\forall s\in\C_R~.
\]
Using the fact that
\[
(I_m-{\color{blue}T})^{-1}
-
(I_m+{\color{blue}T})^{-1}
=
2{\color{blue}T}
(I_m-{\color{blue}T}^2)^{-1}
\quad{\rm and}\quad
(I_m-{\color{blue}T})^{-1}
+
(I_m+{\color{blue}T})^{-1}
=
2(I_m-{\color{blue}T}^2)^{-1},
\]
one can now conclude that
\[
\begin{smallmatrix}(I_m-{\color{blue}T}^2)^{\frac{1}{2}}\end{smallmatrix}
F(s)
\begin{smallmatrix}(I_m-{\color{blue}T}^2)^{-\frac{1}{2}}\end{smallmatrix}
+
\begin{smallmatrix}(I_m-{\color{blue}T}^2)^{-\frac{1}{2}}\end{smallmatrix}
(F(s^*))^*
\begin{smallmatrix}(I_m-{\color{blue}T}^2)^{\frac{1}{2}}\end{smallmatrix}
\succcurlyeq
\begin{smallmatrix}{\color{blue}T}\end{smallmatrix}
+
F(s)
\begin{smallmatrix}{\color{blue}T}\end{smallmatrix}
(F(s^*))^*
\quad\quad\forall s\in\C_R~,
\]
so this part is established.

\qed

{\bf Proof of item (iii) of Theorem \ref{Tm:Set_Of_HP_Functions}}
\smallskip

Multiply Eq. \eqref{eq:HP_Delta} by $U^*$ and $U$ from the left and right, respectively.
\bigskip

{\bf Proof of item (iv) of Theorem \ref{Tm:Set_Of_HP_Functions}}
\smallskip

We shall show it in two ways:\\
Using Eq. \eqref{eq:Quadratic_HP_Delta}, substitute in item (ii) of Lemma \ref{La:Structural_Properties}, 
$q=m$, $E=F(s)$, \mbox{$X=Y=-{\color{blue}T}$.} Since, by assumption \mbox{$-X\succcurlyeq 0$,} the claim
is established.
\smallskip

{\bf Proof of item (v) of Theorem \ref{Tm:Set_Of_HP_Functions}}
\smallskip

Using Eq. \eqref{eq:Quadratic_HP_Delta}, substitute in item (vii) of Lemma \ref{La:Structural_Properties}, 
$q=m$, $E=F(s)$, \mbox{$V=I_m$} and \mbox{$X=Y=-\begin{smallmatrix}{\color{blue}\beta}\end{smallmatrix}I_m$,}
with \mbox{$\begin{smallmatrix}{\color{blue}\beta}\end{smallmatrix}\in[0,~1)$,}
so the claim is established.
\qed
\smallskip

For completeness we point out the following.

\begin{Rk}
{\rm
In numerous works, see e.g.  \cite{BridForb2016}, \cite{Gupta1996} and
\cite{XieGahiAbrouBuhrLaro2020} a related family of functions (sometimes called ``b-bounded" or
``conic sector bound") is addressed. $\Phi(s)$ is $m\times m$-valued rational function analytic
in $\C_R$ and in addition, there exists $b\in[0,~1]$ so that
\begin{center}
$
\frac{1}{2}(1-b^2)\left(\Phi(s)+{\Phi(s)}^*\right)
\succcurlyeq
b\left({\Phi(s)}^*\Phi(s)-I_m\right)
\quad\quad\forall s\in{i}\R.
$
\end{center}
Note that at the end points, two classical sets are obtained: For $b=0$ this is the set $\mathcal{P}$
and for $b=1$ this is $\mathcal{B}$.
$\T$
}
\end{Rk}

\section{Realization of Quantitatively Hyper-Positive Real Rational Functions}
\label{Sec:Realizations}
\setcounter{equation}{0}

\subsection{Proof of item (i) of Theorem \ref{Tm:Kyp_Hyper_Pos_W}}
\smallskip

Recall that for $m\times m$-valued rational functions $F(s)$, analytic in $\mathbb{C}_R$,
the classical Kalman-Yakubovich-Lemma can be casted in the following quadratic form.
For details, see \cite{Lewk2021b}.

\begin{La}\label{La:Quadratic_KYP}
Let $F(s)$ be an $m\times m$-valued rational function, analytic in $\mathbb{C}_R$ and let
$R_F$ be a \mbox{$(n+m)\times(n+m)$} corresponding realization.
\[
F(s)=C(sI_n-A)^{-1}B+D\quad\quad\quad
R_F=\left({\footnotesize\begin{array}{c|c}A&B\\ \hline C&D\end{array}}\right).
\]
Let also $M\in\mathbf{H}_{2m}$ be partitioned to $m\times m$ blocks, so that
\[
M=\left(\begin{smallmatrix}m_{11}&&m_{12}\\~\\m_{12}^*&&m_{22}\end{smallmatrix}\right).
\]
Then the following are equivalent.
\begin{itemize}
\item[(i)~~]{}
\mbox{$
\left(\begin{smallmatrix}F(s)\\~\\I_m\end{smallmatrix}\right)^*
\underbrace{\left(\begin{smallmatrix}m_{11}&&m_{12}\\~\\m_{12}^*&&m_{22}\end{smallmatrix}\right)}_M
\left(\begin{smallmatrix}F(s)\\~\\I_m\end{smallmatrix}\right)
\in\overline{\mathbf P}_m$}\quad
$\forall s\in\C_R$.
\bigskip

\item[(ii)~]{}{} There exists $H\in\mathbf{P}_n$
so that
\begin{equation}\label{eq:KYP_Quadratic_C_R}
\left(\begin{smallmatrix}A&&B\\~\\C&&D\\~\\I_n&&0\\~\\0&&I_m\end{smallmatrix}\right)^*
\underbrace{\left(\begin{smallmatrix}0&&0&&-H&&0\\~\\0&&m_{11}&&0&&m_{12}\\~\\-H&&0&&0&&0
\\~\\0&&{m_{12}}^*&&0&&m_{22}\end{smallmatrix}\right)}_V\left(\begin{smallmatrix}
A&&B\\~\\C&&D\\~\\I_n&&0\\~\\0&&I_m\end{smallmatrix}\right)\in\overline{\mathbf P}_{n+m}\, .
\end{equation}
\end{itemize}
\end{La}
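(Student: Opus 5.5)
The plan is to prove both directions by expanding the block quadratic form in \eqref{eq:KYP_Quadratic_C_R}, and then to reduce the hard direction to a classical KYP lemma.

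\textbf{The matrix in \eqref{eq:KYP_Quadratic_C_R}.} Multiplying out, the $(n+m)\times(n+m)$ matrix there equals
\[
\left(\begin{smallmatrix}-(HA+A^*H)+C^*m_{11}C && -HB+C^*m_{11}D+C^*m_{12}\\~\\ \ast && D^*m_{11}D+D^*m_{12}+m_{12}^*D+m_{22}\end{smallmatrix}\right).
\]
Both directions will use this explicit form.

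\textbf{Direction (ii) $\Rightarrow$ (i).} This is a direct substitution. Fix $s\in\C_R$ with $s\notin{\rm spec}(A)$ and fix $u\in\C^m$. Set $x:=(sI_n-A)^{-1}Bu$, so that
\[
Ax+Bu=sx\quad{\rm and}\quad Cx+Du=F(s)u.
\]
Then
\[
\left(\begin{smallmatrix}A&&B\\~\\C&&D\\~\\I_n&&0\\~\\0&&I_m\end{smallmatrix}\right)\left(\begin{smallmatrix}x\\~\\u\end{smallmatrix}\right)=\left(\begin{smallmatrix}sx\\~\\F(s)u\\~\\x\\~\\u\end{smallmatrix}\right).
\]
Evaluating the quadratic form \eqref{eq:KYP_Quadratic_C_R} on $(x;u)$ gives
\[
-2{\rm Re}(s)\,x^*Hx+u^*\left(\begin{smallmatrix}F(s)\\~\\I_m\end{smallmatrix}\right)^*M\left(\begin{smallmatrix}F(s)\\~\\I_m\end{smallmatrix}\right)u\ \geq 0 .
\]
Since $H\in\mathbf P_n$ and ${\rm Re}(s)>0$, the first term is non-positive, so the second term is $\geq 2{\rm Re}(s)\,x^*Hx\geq 0$. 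This gives (i) at every such $s$. The finitely many points of ${\rm spec}(A)\cap\C_R$ (possible only for non-minimal realizations) are handled by continuity: $F$ is analytic in $\C_R$ by assumption, and $\overline{\mathbf P}_m$ is closed.

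\textbf{Direction (i) $\Rightarrow$ (ii).} This direction requires minimality of $R_F$, as in Theorem \ref{Tm:Kyp_Hyper_Pos_W}. I would first normalize $M$. Since $M$ has balanced regular inertia, Sylvester's theorem (as in the proof of Lemma \ref{La:Structural_Properties}(i)) gives a non-singular $L$ with $L^*ML={\rm diag}(-I_m,I_m)$. Write
\[
L^{-1}\left(\begin{smallmatrix}F\\ I_m\end{smallmatrix}\right)=\left(\begin{smallmatrix}\Phi_1\\ \Phi_2\end{smallmatrix}\right).
\]
Condition (i) becomes $\Phi_2^*\Phi_2\succcurlyeq\Phi_1^*\Phi_1$ on $\C_R$, that is, $G:=\Phi_1\Phi_2^{-1}\in\mathcal B$ wherever $\Phi_2$ is invertible. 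This is the same Cayley-type change of variables that links $\mathcal P$, $\mathcal{HP}_T$ and $\mathcal B$, $\mathcal{HB}_T$ in Section \ref{Sec:Background}.

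Next I would carry the realization along. Since $\Phi_1,\Phi_2$ are affine in $F$, they have realizations with the same $(A,B)$ and output data $L^{-1}\left(\begin{smallmatrix}C&D\\0&I_m\end{smallmatrix}\right)$. A realization of $G$ then has state matrix $A-B\Phi_{2,D}^{-1}\Phi_{2,C}$, after a possible constant shift of $L$ to make $\Phi_2(\infty)$ invertible. Controllability and observability are preserved under this feedback-type transformation. I would then invoke the classical bounded real lemma, in its non-strict form for minimal realizations, to get $H\in\mathbf P_n$ with the bounded-real LMI. Finally, undoing the transformation, the bounded-real LMI pulled back by the invertible output map is exactly \eqref{eq:KYP_Quadratic_C_R}, because the congruence by $L$ and the state feedback act on the block vector $\left(\begin{smallmatrix}A&B\\C&D\\I&0\\0&I\end{smallmatrix}\right)$ only through a right multiplication by an invertible matrix.

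\textbf{Main obstacle.} The main obstacle is this last direction: obtaining a \emph{positive definite} (not merely Hermitian) $H$ from a non-strict frequency inequality. Rantzer's KYP lemma on $i\R$ yields only a Hermitian solution. Positivity must come from (i) holding on all of $\C_R$ and not just on $i\R$, together with observability. I would first try the reduction to $\mathcal B$ sketched above, where the bounded real lemma directly supplies $H\succ 0$, the available-storage solution of a contractive minimal system. If the normalization at infinity ($\Phi_2(\infty)$ invertible) fails, the fallback is a perturbation argument: replace $M$ by $M+\epsilon\,{\rm diag}(0,I_m)$, solve the strict problem, and let $\epsilon\to 0$. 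Minimality ensures the resulting family of $H$'s stays bounded and bounded away from singularity.
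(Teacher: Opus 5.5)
Your argument is correct once its hypotheses are made explicit. It also takes a genuinely different route from the paper, which gives no proof of this lemma and only refers to \cite{Lewk2021b}.

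Direction (ii)$\Rightarrow$(i) is the standard substitution $x=(sI_n-A)^{-1}Bu$. It uses only $H\succcurlyeq 0$ and needs no minimality.

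For (i)$\Rightarrow$(ii), normalizing $M$ to ${\rm diag}(-I_m,I_m)$ and passing to $G=\Phi_1\Phi_2^{-1}$ gives you positivity of $H$ for free. The output congruence and the feedback $A\mapsto A-BD_2^{-1}C_2$ do not touch the state coordinates, so the $P\succ 0$ supplied by the bounded real lemma is exactly the required $H$. Your claim that minimality survives is right: if $(A-BK)v=\lambda v$ and $(C_1-D_1K)v=0$, put $u=-Kv$; then $L^{-1}\left(\begin{smallmatrix}Cv+Du\\ u\end{smallmatrix}\right)=0$, so $u=0$, $Av=\lambda v$ and $Cv=0$.

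Two simplifications are available. Neither a shift of $L$ nor the perturbation fallback is needed. If $\Phi_2(s)v=0$ with $s\in\C_R$ (or $s=\infty$, by continuity of (i)), then (i) forces $\Phi_1(s)v=0$, hence $L^{-1}\left(\begin{smallmatrix}F(s)v\\ v\end{smallmatrix}\right)=0$ and $v=0$. So $\Phi_2$ is invertible throughout $\C_R$ and at infinity, and $G$ is analytic and contractive on $\C_R$. Strictly speaking, the $L$-congruence acts on the stacked block matrix from the left and is absorbed into the middle matrix; only the feedback is a right multiplication.

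What you should not leave implicit is the phrase ``Since $M$ has balanced regular inertia''. The statement only assumes $M\in\mathbf{H}_{2m}$, and, like minimality (which you rightly add), balanced inertia cannot be dropped.

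To see why, integrate $\frac{d}{dt}x^*Hx\leq\left(\begin{smallmatrix}y\\ u\end{smallmatrix}\right)^*M\left(\begin{smallmatrix}y\\ u\end{smallmatrix}\right)$ along $u(t)=\sum_j e^{\lambda_jt}u_j$ on $(-\infty,0]$, with ${\rm Re}\,\lambda_j>0$. This shows that (ii) with $H\succcurlyeq 0$ forces the Pick matrix $\big[(\overline{\lambda_k}+\lambda_j)^{-1}u_k^*\Phi(\lambda_k)^*M\Phi(\lambda_j)u_j\big]_{k,j}$, where $\Phi=\left(\begin{smallmatrix}F\\ I_m\end{smallmatrix}\right)$, to be positive semidefinite. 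For balanced $M$ this follows from (i) through your $G$ (Schur class), but not in general.

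For instance, take $m=2$, $M={\rm diag}(-1,1,1,1)$, $z=\frac{s-1}{s+1}$ and $F=\left(\begin{smallmatrix}1+z^2&0\\ \sqrt3\,z&0\end{smallmatrix}\right)$ with a minimal realization. Condition (i) holds because $1+3|z|^2-|1+z^2|^2\geq|z|^2(1-|z|^2)\geq 0$. But the Pick matrix at $\lambda_1=1$, $\lambda_2\neq 1$, $u_1=u_2=e_1$ has $(1,1)$ entry $0$ and $(1,2)$ entry $-z(\lambda_2)^2/(1+\lambda_2)\neq 0$, so it is not positive semidefinite.

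This restriction costs the paper nothing, since $M_{\mathcal P}$, $M_{\mathcal B}$ and $M_{\mathcal{HP}_T}$ all have inertia $(m,0,m)$. With minimality and balanced inertia stated as hypotheses of (i)$\Rightarrow$(ii), your proof is complete.
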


In the classical cases one has the following:
Using Eq. \eqref{eq:Quad_Def_P}, for the Positive Real Lemma take $
M_{\mathcal{P}}=\left(\begin{smallmatrix}0~&&I_m\\~\\I_m&&0\end{smallmatrix}\right)$.\\
Similarly, for the Bounded Real Lemma, use Eq. \eqref{eq:Quad_Def_B} and take $M_{\mathcal{B}}=
\left(\begin{smallmatrix}-I_m&&0\\~\\~~0&&I_m\end{smallmatrix}\right)$.
\smallskip

Now, from Eq. \eqref{eq:Quadratic_HP_Delta} one that
\[
M_{\mathcal{HP}_{\color{blue}T}}
=\left(\begin{smallmatrix}-{\color{blue}T}&&~~I_m\\~\\~~I_m&&-{\color{blue}T}
\end{smallmatrix}\right),
\]
and thus the following is obtained.

\begin{La}\label{La:Quadratic_KYP_HP_T}
Let $F(s)$ be an $m\times m$-valued rational function, and let
$R_F$ be a \mbox{$(n+m)\times(n+m)$} corresponding realization.
\[
F(s)=C(sI_n-A)^{-1}B+D\quad\quad\quad
R_F=\left({\footnotesize\begin{array}{c|c}A&B\\ \hline C&D\end{array}}\right).
\]
Let also \mbox{$I_m\succ{\color{blue}T}\succcurlyeq 0$} and $H\in\mathbf{P}_n$ be parameters. 
\smallskip

Then the two following statements are equivalent.
\begin{itemize}
\item[(i)~~~]{}
\begin{equation}\label{eq:KYP_Quadratic_HP_T}
\left(\begin{smallmatrix}
A&&B\\~\\C&&D\\~\\I_n&&0\\~\\0&&I_m
\end{smallmatrix}\right)^*
\underbrace{
\left(\begin{smallmatrix}
~0&&~0&&-H&&~0\\~\\~0&&-{\color{blue}T}&&~0&&~I_m\\~\\-H&&~0&&~0&&~0\\~\\~0&&~I_m&&~0&&-{\color{blue}T}
\end{smallmatrix}\right)}_V
\left(\begin{smallmatrix}
A&&B\\~\\C&&D\\~\\I_n&&0\\~\\0&&I_m
\end{smallmatrix}\right)
\in\overline{\mathbf P}_{n+m}\, .
\end{equation}
\item[(ii)~~]{}\quad
$
\left(\begin{smallmatrix}-H&&0\\~\\0&&I_m\end{smallmatrix}\right)R_F+{R_F}^*
\left(\begin{smallmatrix}-H&&0\\~\\0&&I_m\end{smallmatrix}\right)\succcurlyeq
\left(\begin{smallmatrix}C~&&D\\~\\0_{m\times n}&&I_m\end{smallmatrix}\right)^*
\left(\begin{smallmatrix}{\color{blue}T}&&0\\~\\0&&{\color{blue}T}\end{smallmatrix}\right)
\left(\begin{smallmatrix}C~&&D\\~\\0_{m\times n}&&I_m\end{smallmatrix}\right)
$
\end{itemize}
and imply that 
\begin{itemize}
\item[(iii)~]{}
$F(s)$ is a $\mathcal{HP}_{\color{blue}T}$ function, i.e. it satisfies
\[
\left(\begin{smallmatrix}F(s)\\~\\I_m\end{smallmatrix}\right)^*
\left(\begin{smallmatrix}-{\color{blue}T}&&~~I_m\\~\\~~I_m&&-{\color{blue}T}
\end{smallmatrix}\right)
\left(\begin{smallmatrix}F(s)\\~\\I_m\end{smallmatrix}\right)
\in\overline{\mathbf P}_m\quad\forall s\in\C_R~.
\]
\end{itemize}
If the realization is minimal then {\rm (iii)} imply {\rm (i)} and {\rm (ii)}.
\end{La}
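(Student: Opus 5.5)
The plan is to derive Lemma \ref{La:Quadratic_KYP_HP_T} as a specialization of Lemma \ref{La:Quadratic_KYP}, plus a direct block computation showing (i)$\Leftrightarrow$(ii).

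\textbf{Step 1: (i)$\Leftrightarrow$(ii).} Expand the quadratic form in Eq. \eqref{eq:KYP_Quadratic_HP_T}. Write the tall matrix as the stack of the four block rows $(A\ B)$, $(C\ D)$, $(I_n\ 0)$, $(0\ I_m)$. The only nonzero couplings in $V$ are:
\begin{itemize}
\item $-H$ between the first and third block rows;
\item $-T$ on the second and fourth block rows;
\item $I_m$ between the second and fourth block rows.
\end{itemize}
Hence the form equals
\[
-\left(\begin{smallmatrix}I_n&0\end{smallmatrix}\right)^*H\left(\begin{smallmatrix}A&B\end{smallmatrix}\right)
-\left(\begin{smallmatrix}A&B\end{smallmatrix}\right)^*H\left(\begin{smallmatrix}I_n&0\end{smallmatrix}\right)
+\left(\begin{smallmatrix}C&D\end{smallmatrix}\right)^*\left(\begin{smallmatrix}0&I_m\end{smallmatrix}\right)
+\left(\begin{smallmatrix}0&I_m\end{smallmatrix}\right)^*\left(\begin{smallmatrix}C&D\end{smallmatrix}\right)
\]
\[
-\left(\begin{smallmatrix}C&D\end{smallmatrix}\right)^*T\left(\begin{smallmatrix}C&D\end{smallmatrix}\right)
-\left(\begin{smallmatrix}0&I_m\end{smallmatrix}\right)^*T\left(\begin{smallmatrix}0&I_m\end{smallmatrix}\right).
\]
The first four terms assemble into $\left(\begin{smallmatrix}-H&0\\0&I_m\end{smallmatrix}\right)R_F+R_F^*\left(\begin{smallmatrix}-H&0\\0&I_m\end{smallmatrix}\right)$. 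To see this, write $R_F$ with rows $(A\ B)$ and $(C\ D)$; the $(2,2)$-block of the product is exactly the $C,D$ cross terms.

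The last two terms equal $-\Gamma^*\mathrm{diag}(T,T)\Gamma$, where $\Gamma$ has block rows $(C\ D)$ and $(0\ I_m)$. So positive semidefiniteness of the form is literally inequality (ii). This step is pure bookkeeping and holds without any assumption on $T$ or minimality.

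\textbf{Step 2: (i)$\Rightarrow$(iii) and the minimal converse.} Apply Lemma \ref{La:Quadratic_KYP} with $m_{11}=m_{22}=-T$ and $m_{12}=I_m$, i.e. $M=M_{\mathcal{HP}_T}$ from Eq. \eqref{eq:Quadratic_HP_Delta}. Then the matrix $V$ of Eq. \eqref{eq:KYP_Quadratic_C_R} is exactly the $V$ of Eq. \eqref{eq:KYP_Quadratic_HP_T}, and condition (i) of Lemma \ref{La:Quadratic_KYP} is precisely (iii).

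Two hypotheses must be checked before invoking it:
\begin{itemize}
\item $M\in\mathbf{H}_{2m}$. For $I_m\succ T\succcurlyeq 0$, $M$ is congruent to $\mathrm{diag}(-(I_m+T),\,I_m-T)$, hence non-singular.
\item $F$ analytic in $\C_R$. For the direction (i)$\Rightarrow$(iii), this is not assumed in the present lemma. It can be obtained from (ii): its $(1,1)$ block gives $-(HA+A^*H)\succcurlyeq C^*TC\succcurlyeq 0$ with $H\succ 0$, so $\mathrm{spec}(A)\subset\overline{\C}_L$. The poles of $F$ therefore lie off $\C_R$.
\end{itemize}

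Alternatively, and more self-contained, one can prove (ii)$\Rightarrow$(iii) directly. For $s\in\C_R$ and any $u$, set $x=(sI_n-A)^{-1}Bu$ and $y=Fu$. Evaluate (ii) on the vector $\left(\begin{smallmatrix}x\\u\end{smallmatrix}\right)$. Since $Ax+Bu=sx$ and $Cx+Du=y$, this gives
\[
-2\,\mathrm{Re}(s)\,x^*Hx+u^*y+y^*u\geq u^*Tu+y^*Ty,
\]
and $-2\,\mathrm{Re}(s)\,x^*Hx\le 0$ yields (iii).

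\textbf{Step 3: the converse under minimality.} This is the main obstacle, since it is the genuinely nontrivial KYP direction. I would simply invoke the minimal-realization converse contained in Lemma \ref{La:Quadratic_KYP} (as in \cite{Lewk2021b}). Its hypotheses are the non-singularity of $M$ verified above and analyticity of $F$ in $\C_R$, which is part of the definition of $\mathcal{HP}_T$. The resulting $H\in\mathbf{P}_n$ gives (i), and Step 1 then gives (ii).

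If a direct argument were required instead, I would reduce to the classical Bounded Real Lemma. Use the Cayley-type transform of Section \ref{ Proof_of_HP_T_structure_theorem}: the function $\tilde G=(I_m+T)^{1/2}\,\mathcal{C}(F)\,(I_m-T)^{-1/2}\in\mathcal{B}$ admits a minimal realization obtained from $R_F$ by a linear-fractional change. The BRL supplies $H$ for $\tilde G$, and one translates it back via the same congruence. The care needed is to check that minimality is preserved and that $I_m+D$ is invertible, which holds since $D+D^*\succcurlyeq T+D^*TD$ forces $I_m+D$ invertible.
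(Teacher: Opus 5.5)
Your proposal is correct and follows the paper's proof. You get (i)$\Leftrightarrow$(ii) by expanding the quadratic form, and (i)$\Leftrightarrow$(iii), including the minimal converse, by specializing Lemma \ref{La:Quadratic_KYP} to $m_{11}=m_{22}=-T$, $m_{12}=I_m$; your extra checks (non-singularity of $M_{\mathcal{HP}_T}$, analyticity of $F$ in $\C_R$ from the $(1,1)$ block of (ii), and the direct evaluation at $x=(sI_n-A)^{-1}Bu$) fill in what the paper's one-line argument leaves implicit, and you rightly read the minimal converse as giving \emph{some} $H\in\mathbf{P}_n$, not the prescribed one.
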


Indeed, item (i) is just a quadratic form of item (ii) which in turn is Eq.
\eqref{eq:HP_Delta_KYP}.\\
Item (i) and (iii) are a special case of Lemma \ref{La:Quadratic_KYP}. 
\smallskip

Thus item (i) in Theorem \ref{Tm:Kyp_Hyper_Pos_W} is established.


\subsection{Proof of items (ii), (iii) of Theorem \ref{Tm:Kyp_Hyper_Pos_W}}
\label{Subsec:Balanced}
We start by recalling the classical notion of {\em balanced realization.}
\smallskip

Let $F(s)$ be a $p\times m$-valued rational function, with no pole at infinity, of McMillan degree $n$,
and let \mbox{$R_F=\left({\footnotesize\begin{array}{c|c}A&B\\ \hline C&D\end{array}}\right)$} be a
corresponding minimal realization. Recall that if $A$ is Hurwitz-stable (i.e.  spectrum in $\C_L$)
then there exists a pair of $\mathbf{P}_n$ matrices $H_{\rm cont}$ and $H_{\rm obs}$ satisfying
\[
H_{\rm cont}A^*+AH_{\rm cont}=-BB^*\quad\quad{\rm and}\quad\quad H_{\rm obs}A+A^*H_{\rm obs}=-C^*C.
\]
Then $H_{\rm cont}$ and $H_{\rm obs}$ are called the Controllability and Observability Gramians,
respectively.
\smallskip

Let $V\in\C^{n\times n}$ be a non-singular. Applying coordinate transformation
means taking \mbox{$\hat{R}_F:=\left(\begin{smallmatrix}V&&0\\~\\0&&I_p\end{smallmatrix}\right)^{-1}
R_F
\left(\begin{smallmatrix}V&&0\\~\\0&&I_p\end{smallmatrix}\right)$} yielding another minimal
realization of the same $F(s)$. In this case, the Gramian equations turns to be
\[
\begin{matrix}
\underbrace{(V^*H_{\rm cont}^{-1}V)^{-1}}_{\hat{H}_{\rm cont}}
\underbrace{(V^{-1}AV)^*}_{{\hat{A}}^*}
+\underbrace{V^{-1}AV}_{\hat{A}}
\underbrace{(V^*H_{\rm cont}^{-1}V)^{-1}}_{\hat{H}_{\rm cont}}
&=&-\underbrace{V^{-1}B}_{\hat{B}}\underbrace{(V^{-1}B)^*}_{{\hat{B}}^*}
\\
\overbrace{
V^*H_{\rm obs}V}^{\hat{H}_{\rm obs}}\overbrace{V^{-1}AV}^{\hat{A}}+\overbrace{(V^{-1}AV)^*}^{{\hat{A}}^*}
\overbrace{V^*H_{\rm obs}V}^{\hat{H}_{\rm obs}}&=&-\overbrace{(CV)^*}^{{\hat{C}^*}}
\overbrace{CV}^{\hat{C}}.
\end{matrix}
\]
One can always choose $V$ so that
\[
\hat{H}_{\rm cont}=\hat{H}_{\rm obs}~,
\]
and then the resulting
realization is said to be {\em balanced}.
For a specific example, see the system in Eq.
\eqref{eq:Example_Balanced_Realization}, and for details e.g. \cite[Section 3.9]{Zhou1996}. For
completeness, we recall that if ${\rm spec}(A)$ is not confined to single open half planes, a
balancing transformation may not exist. For details see \cite{KenneyHewer1987}.
\smallskip

Relying on Theorem \ref{Tm:Kyp_Hyper_Pos_W} and Propositions \ref{Pn:SP_and_HP},
\ref{Pn:Sets_Of_Realizations}, in the sequel, we shall technically manipulate realization arrays of
Hyper-Positive functions as if they were matrices. However, {\em in a general framework}, the
motivation to do that, is questionable. This is illustrated next.

\begin{Ex}\label{Ex:Caution_Inversion}
{\rm
{\bf a.}~
Consider a scalar $\mathcal{P}$ function $f(s)$, along with its balanced realization $R_f$
\[
f(s)=\begin{smallmatrix}\frac{s}{s+1}\end{smallmatrix}\quad\quad
R_f
=\left({\footnotesize\begin{array}{r|r}-1&-1\\ \hline 1&1\end{array}}\right).
\]
The rational function $f(s)$ is well defined, but as a matrix, $R_f$ is singular.
\smallskip

{\bf b.}~
Consider a scalar Hurwitz stable (but not in $\mathcal{P}$) rational functions $f(s)$, along with
its minimal realization $R_f$,
\[
f(s)=
\begin{smallmatrix}
\frac{1}{(s+1)^2}
\end{smallmatrix}
\quad
\quad
\quad
R_f=\left({\footnotesize\begin{array}{rr|r}-1&1&0\\0&-1&1\\ \hline 1&0&0\end{array}}\right).
\]
Treating $R_f$ as a matrix, take \mbox{$R_{\hat{f}}:=(R_f)^{-1}$} to be a
(minimal) realization of another rational function $\hat{f}(s)$
\[
\hat{f}(s)=
\begin{smallmatrix}
\frac{1}{s^2}+\frac{2}{s}
\end{smallmatrix}
\quad
\quad
\quad
R_{\hat{f}}:={R_f}^{-1}=\left({\footnotesize\begin{array}{cc|c}0&0&1\\1&0&1\\
\hline 1&1&1\end{array}}\right).
\]
Now, $\hat{f}(s)$ is {\em unstable}.
}
$\T$
\end{Ex}

In contrast to the above, item (iii) of Theorem \ref{Tm:Kyp_Hyper_Pos_W} below, focuses on a framework
where inversion of realization arrays is meaningful.
\smallskip

The following is used in the sequel.

\begin{La}\label{La:R_Non-singular}
For $I_m\succ{\color{blue}T}\succ 0$, consider again Eq. \eqref{eq:HP_Delta_KYP},
\[
\left(\begin{smallmatrix}-H&&0\\~\\0&&I_m\end{smallmatrix}\right)
\underbrace{
\left(\begin{smallmatrix}A&&B\\~\\C&&D\end{smallmatrix}\right)
}_{R_F}
+
\underbrace{
\left(\begin{smallmatrix}A&&B\\~\\C&&D\end{smallmatrix}\right)^*
}_{{R_F}^*}
\left(\begin{smallmatrix}-H&&0\\~\\0&&I_n\end{smallmatrix}\right)
\succcurlyeq
\underbrace{
\left(\begin{smallmatrix}C~&&D\\~\\0_{m\times n}&&I_m\end{smallmatrix}\right)^*
\left(
\begin{smallmatrix}{\color{blue}T}&&0\\~\\
0&&{\color{blue}T}\end{smallmatrix}\right)
\left(\begin{smallmatrix}C~&&D\\~\\0_{m\times n}&&I_m\end{smallmatrix}\right)
}_{Q},
\]
for realizations of ${\mathcal{HP}}_{\color{blue}T}$ functions (or the simpler Eq. \eqref{eq:M_HP_beta}
for realizations of $\mathcal{HP}_{\color{blue}\beta}$ functions, with
$\begin{smallmatrix}{\color{blue}\beta}\end{smallmatrix}\in(0,~1)$).
\smallskip

The pair $A$, $C$ is observable, if and only if,
the pair $R_F$, $Q$ is observable.
\smallskip

In this case, the realization array $R_F$, can be viewed as a \mbox{$(n+m)\times(n+m)$} 
non-singular matrix.
Furthermore, this matrix has $n$ eigenvalues in $\C_L$ and $m$ eigenvalues in $\C_R~$.
\end{La}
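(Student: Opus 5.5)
The plan is to work with the signature matrix $J:=\left(\begin{smallmatrix}-H&&0\\~\\0&&I_m\end{smallmatrix}\right)$, which is non-singular with $n$ negative and $m$ positive eigenvalues since $H\in\mathbf{P}_n$. We write the hypothesis as
\[
W:=JR_F+{R_F}^*J\succcurlyeq Q\succcurlyeq 0 .
\]
The case of $\mathcal{HP}_{\color{blue}\beta}$ is the special case ${\color{blue}T}=\begin{smallmatrix}{\color{blue}\beta}\end{smallmatrix}I_m$ with $\begin{smallmatrix}{\color{blue}\beta}\end{smallmatrix}\in(0,~1)$, so it needs no separate treatment. The first step is to identify $\ker Q$. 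Since ${\color{blue}T}\succ 0$, we have $\ker Q=\ker\Gamma$. From the form of $\Gamma$, a vector $\left(\begin{smallmatrix}x\\ u\end{smallmatrix}\right)$ lies in $\ker\Gamma$ if and only if $u=0$ and $Cx=0$. This is the only place where non-singularity of ${\color{blue}T}$ is used.

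For the observability equivalence we use the Popov--Belevitch--Hautus eigenvector test. The pair $R_F,Q$ is unobservable if and only if some eigenvector $z$ of $R_F$ satisfies $Qz=0$. By the first step, such a $z$ has the form $z=\left(\begin{smallmatrix}x\\ 0\end{smallmatrix}\right)$ with $Cx=0$. For such $z$ we get $R_Fz=\left(\begin{smallmatrix}Ax\\ Cx\end{smallmatrix}\right)=\left(\begin{smallmatrix}Ax\\ 0\end{smallmatrix}\right)$. Hence $R_Fz=\lambda z$ holds if and only if $Ax=\lambda x$ and $Cx=0$, which is exactly the PBH criterion for unobservability of $(A,C)$. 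This gives the equivalence in both directions.

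Next we show that $R_F$ has no eigenvalue on $i\R$, which covers non-singularity as the case $\lambda=0$. Suppose $R_Fz=i\omega z$ with $\omega\in\R$ and $z\not=0$. Then
\[
z^*Wz=i\omega\, z^*Jz-i\omega\, z^*Jz=0 .
\]
Since $z^*Wz\geq z^*Qz\geq 0$, this forces $z^*Qz=0$. Because $Q\succcurlyeq 0$, it follows that $Qz=0$, so $z$ is an eigenvector of $R_F$ inside $\ker Q$. This contradicts the observability of $R_F,Q$. In particular $0\not\in{\rm spec}(R_F)$, so $R_F$ is non-singular.

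The main step is the inertia count. We aim for ${\rm In}(R_F)={\rm In}(J)$, meaning the number of eigenvalues of $R_F$ in $\C_R$ (respectively $\C_L$) equals the number of positive (respectively negative) eigenvalues of $J$. This is the Carlson--Schneider inertia theorem, and we plan to prove it directly by a perturbation argument. For $\epsilon>0$, set $R_\epsilon:=R_F+\epsilon J^{-1}$. Then
\[
JR_\epsilon+R_\epsilon^*J=W+2\epsilon I\succ 0 .
\]
The strict Lyapunov inequality with the non-singular Hermitian $J$ gives, by the Ostrowski--Schneider theorem (the strict version of Eq. \eqref{eq:Classical_Stein_Lypunov}), that $R_\epsilon$ has $m$ eigenvalues in $\C_R$ and $n$ in $\C_L$, none on $i\R$. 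As $\epsilon\downarrow 0$, the eigenvalues of $R_\epsilon$ converge continuously to those of $R_F$. Since $R_F$ has no eigenvalues on $i\R$ by the previous step, no eigenvalue can cross the imaginary axis in the limit. Hence the counts persist, and $R_F$ has $n$ eigenvalues in $\C_L$ and $m$ in $\C_R$.

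The only delicate point is this limiting argument. It works precisely because the earlier step excludes imaginary-axis eigenvalues, and that is why observability of $(A,C)$ is needed.
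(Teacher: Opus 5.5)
Your proof is correct. The observability equivalence is the same PBH argument the paper uses. Your observation that $\ker Q=\ker\Gamma$ (because $T\succ 0$), so that $Qz=0$ forces $z=\left(\begin{smallmatrix}x\\0\end{smallmatrix}\right)$ with $Cx=0$, is the precise form of the paper's step; positive definiteness of the lower-right block of $Q$ alone would not give $u_m=0$.

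The spectral part is where you genuinely diverge. In the proof of item (ii) of Theorem~\ref{Tm:Kyp_Hyper_Pos_W}, the paper quotes an observability version of the inertia theorem for the Lyapunov equation $\hat H L+L^*\hat H=Q$ (Horn--Johnson, Zhou). It states this under the extra hypothesis $\lambda_j+\lambda_k^*\neq 0$ and applies it with $\hat H=J$, $L=R_F$. You instead prove directly that observability of $(R_F,Q)$ rules out eigenvalues of $R_F$ on $i\R$, via $z^*Wz=0\Rightarrow Qz=0$. You then obtain the inertia from the strict Ostrowski--Schneider theorem applied to $R_F+\epsilon J^{-1}$, plus continuity of the spectrum. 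In effect you reprove the Carlson--Schneider theorem in the case at hand.

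The paper's route is shorter but rests on citations, and it glosses over two points that yours handles. First, the hypothesis is the inequality $JR_F+R_F^*J=W\succcurlyeq Q$, not an equation with right-hand side $Q$; one would have to note $\ker W\subseteq\ker Q$ to transfer observability to $W$. Second, the condition $\lambda_j+\lambda_k^*\neq 0$ is never checked for $R_F$, whereas you derive the absence of imaginary-axis eigenvalues rather than assume it. Your version also makes transparent exactly where observability and $T\succ 0$ enter.
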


{\bf Proof :}~
By the Popov-Belevich-Hautus test, see e.g. \cite[Theorem 2.14]{DullPaga2000}, \cite[Theorem 3.3]{Zhou1996},
the pair $A$, $C$ is unobservable, if and only if, there exists a pair \mbox{$0\not=v\in\C^n$} and
$\lambda\in\C$, so that
\begin{equation}\label{eq:A,C_Observable}
\begin{smallmatrix}n+m~\end{smallmatrix}
\lbrace{\left(\begin{smallmatrix}A-\lambda{I}_n\\~\\C\end{smallmatrix}\right)}
\begin{smallmatrix}v\end{smallmatrix}
=
0_{(n+m)\times 1}~.
\end{equation}
In  a similar way, the pair $R_F, Q$ is unobservable, if and only if, there exists a pair
\mbox{$0\not=u\in\C^{n+m}$} and $\mu\in\C$, so that
\[
\begin{smallmatrix}
2(n+m)~
\end{smallmatrix}
\lbrace{\left(\begin{smallmatrix}R_F-\mu{I}_{n+m}\\~\\Q\end{smallmatrix}\right)}
\begin{smallmatrix}
u
\end{smallmatrix}
=
0_{2(n+m)\times 1}~.
\]
For some $u_n\in\C^n$ and $u_m\in\C^m$, partition now
\mbox{$\begin{smallmatrix}u\end{smallmatrix}
=\left(\begin{smallmatrix}u_n\\~\\u_m\end{smallmatrix}\right)$} 
so that
\[
\left(\begin{smallmatrix}R_F-\mu{I}_{n+m}\\~\\Q\end{smallmatrix}\right)
\left(\begin{smallmatrix}u_n\\~\\u_m\end{smallmatrix}\right)
=
\left(
\begin{smallmatrix}
\begin{smallmatrix}
A-\mu{I}_n&&B\\~\\C&&D-\mu{I}_m
\end{smallmatrix}
\\~\\~\\
\left(\begin{smallmatrix}C~&&D\\~\\0_{m\times n}&&I_m\end{smallmatrix}\right)^*
\left(
\begin{smallmatrix}{\color{blue}T}&&0\\~\\0&&{\color{blue}T}\end{smallmatrix}\right)
\left(\begin{smallmatrix}C~&&D\\~\\0_{m\times n}&&I_m\end{smallmatrix}\right)
\end{smallmatrix}\right)
\left(\begin{smallmatrix}u_n\\~\\u_m\end{smallmatrix}\right)
=0_{2(n+m)\times 1}~.
\]
Note that the lower-right $m\times m$ block of $Q$ is, \mbox{$\left(\begin{smallmatrix}D\\~\\
I_m\end{smallmatrix}\right)^*\begin{smallmatrix}{\color{blue}T}\end{smallmatrix}\left(
\begin{smallmatrix}D\\~\\I_m\end{smallmatrix}\right)$}, so it is positive-definite. This implies
that $u_m$ (the lower part of the vector $u$) vanishes. It thus turns out that, for some
\mbox{$0\not=c\in\C$}, one can always take \mbox{$u=\begin{smallmatrix}c\end{smallmatrix}
\left(\begin{smallmatrix}v\\~\\0\end{smallmatrix}\right)$}, with $v$ from
Eq. \eqref{eq:A,C_Observable} (and then $\lambda=\mu$). Hence, this part of the claim is established.
\qed
\bigskip

{\bf Proof of item (ii) of Theorem \ref{Tm:Kyp_Hyper_Pos_W}}\\
Consider a Lyapunov equation of the form $\hat{H}L+L^*\hat{H}=Q$, with $\hat{H}\in\mathbf{H}_q$,
$Q\in\overline{\mathbf P}_q$ and for all \mbox{$n\geq j\geq k\geq 1$}, the eigenvalue of $L$ satisfy
\mbox{${\lambda}_j+{{\lambda}_k}^*\not=0$}. Recall now (see e.g.
\cite[Theorems 2.4.7 and 2.4.10]{HornJohnson2}, \cite[Lemma 3.19]{Zhou1996}) that if the pair $L, Q$ is
observable, the matrices $\hat{H}$ and $L$ share the same regular inertia.
\smallskip

In our case $q=n+m$, \mbox{$\hat{H}=\left(\begin{smallmatrix}-H&&0\\~\\0&&I_m\end{smallmatrix}\right)$,}
with $H\in\mathbf{P}_n$ and $L=R_F$, so each $R_F$ has $n$ eigenvalues in $\C_L$ and $m$ eigenvalues in
$\C_R$. In particular, $R_F$ is non-singular. So the claim is established.
\qed
\smallskip

We are now ready to the following.
\bigskip

{\bf Proof of item (iii) of Theorem \ref{Tm:Kyp_Hyper_Pos_W}}\\
By assumption the realization $R_F$ is minimal and hence the pair $A,~C$ is  observable. Thus,
non-singularity of $R_F$ was established in Lemma \ref{La:R_Non-singular}.
\smallskip

Now, to show that this set is closed under inversion, substitute Eq. \eqref{eq:KYP_Quadratic_HP_T}
in item (iii) of Lemma \ref{La:Structural_Properties} 
to obtain $q=n+m$, and the three following \mbox{$(n+m)\times(n+m)$} matrices: $E=R_F$,
\mbox{$X=Y=\left(\begin{smallmatrix}0_{n\times n}&~0\\~\\0&-{\color{blue}T}\end{smallmatrix}\right)$}
and \mbox{$V=\left(\begin{smallmatrix}-H&0~\\~\\~0&I_m\end{smallmatrix}\right)$,}
so the claim is established.
\qed
\smallskip

We next show that, the last fact can be actually made explicit.

\begin{Cy}
Let $F(s)$ be a $m\times m$ valued rational function of McMillan degree $n$ and let
\mbox{$R_F=\left( {\footnotesize\begin{array}{c|c}A&B\\ \hline C&D\end{array}}\right)$} be
a corresponding realization, i.e. $A$ and $D$ are $n\times n$ and $m\times m$ respectively.

{\bf (i)}~ If all three matrices $R_F$, $A$~ and $D$ are non-singular, then
\[
R_{\hat{F}}:=(R_F)^{-1}=\left({\footnotesize\begin{array}{c|c}(A-BD^{-1}C)^{-1}&A^{-1}B(CA^{-1}B-D)^{-1}
\\ \hline D^{-1}C(BD^{-1}C-A)^{-1}&(D-CA^{-1}B)^{-1}\end{array}}\right).
\]
{\bf (ii)}~ 
If $F\in\mathcal{HP}_{\color{blue}T}$, for some \mbox{$I_m\succ{\color{blue}T}\succ 0$,} then $R_F$, $A$
and $D$ are non-singular, and the resulting $R_{\hat{F}}$ is a realization of $\hat{F}(s)$, which belongs
to the same $F\in\mathcal{HP}_{\color{blue}T}$.
\end{Cy}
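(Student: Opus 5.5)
Part (i) is a block-matrix inversion computed through Schur complements. Since $A$ is non-singular, the Schur complement of $A$ in $R_F$ is $S_A:=D-CA^{-1}B$, and $\det R_F=\det A\cdot\det S_A$. So non-singularity of $R_F$ and $A$ forces $S_A$ to be invertible. Likewise, non-singularity of $D$ makes $S_D:=A-BD^{-1}C$ invertible, because $\det R_F=\det D\cdot\det S_D$. The standard two-sided block inversion formula then gives
\[
(R_F)^{-1}=\left(\begin{smallmatrix}S_D^{-1}&&-A^{-1}BS_A^{-1}\\~\\-D^{-1}CS_D^{-1}&&S_A^{-1}\end{smallmatrix}\right).
\]
Rewriting $-S_A^{-1}=(CA^{-1}B-D)^{-1}$ and $-S_D^{-1}=(BD^{-1}C-A)^{-1}$ yields exactly the displayed entries. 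To make this self-contained, I would verify it by multiplying $R_F$ by the proposed inverse block by block. The $(1,2)$ block, for example, is $-BS_A^{-1}+BS_A^{-1}=0$. The $(2,2)$ block is $(D-CA^{-1}B)S_A^{-1}=I_m$. The off-diagonal identities $S_D^{-1}BD^{-1}=A^{-1}BS_A^{-1}$ follow from the push-through identity $S_DA^{-1}B=BD^{-1}S_A$, which is checked by expanding both sides.

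For part (ii), take $F\in\mathcal{HP}_{T}$ with $I_m\succ T\succ0$ and work with a minimal realization of McMillan degree $n$. By item (i) of Theorem~\ref{Tm:Kyp_Hyper_Pos_W} there is an $H\in\mathbf{P}_n$ satisfying Eq.~\eqref{eq:HP_Delta_KYP}. Minimality gives observability of $(A,C)$, so Lemma~\ref{La:R_Non-singular} and item (ii) of the theorem show that $R_F$ is non-singular. It remains to show that $D$ and $A$ are invertible.

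For $D$, compare the lower-right $m\times m$ blocks of Eq.~\eqref{eq:HP_Delta_KYP}. This gives $D+D^*\succcurlyeq T+D^*TD\succcurlyeq T\succ0$. Hence $D$ has positive definite Hermitian part, and so it is non-singular.

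For $A$, compare the upper-left $n\times n$ blocks. This gives $-(HA+A^*H)\succcurlyeq C^*TC\succcurlyeq0$. Suppose $Av=0$ with $v\ne0$. Then $v^*C^*TCv\le -v^*(HA+A^*H)v=0$, so $Cv=0$ because $T\succ0$. Together with $Av=0$ this contradicts observability through the PBH test with $\lambda=0$. Hence $A$ is non-singular. Alternatively, $(H,A)$ forms a Lyapunov pair with observable right-hand side, so by the same inertia argument used in the proof of item (ii), $A$ has all its eigenvalues in $\C_L$.

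Part (i) now applies and produces the explicit $R_{\hat F}$. By item (iii) of Theorem~\ref{Tm:Kyp_Hyper_Pos_W}, $R_{\hat F}$ satisfies Eq.~\eqref{eq:HP_Delta_KYP} with the same $T$ and $H$. The first direction of item (i) of that theorem, which needs no minimality, then places $\hat F$ in $\mathcal{HP}_{T}$.

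The main obstacle is the invertibility of $A$, since it is the only point that uses observability beyond what Lemma~\ref{La:R_Non-singular} already provides. I would handle it with the PBH argument at $\lambda=0$ given above. The remaining steps are bookkeeping.
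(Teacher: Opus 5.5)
Your proposal is correct and follows the paper's route. Part (i) is the Schur-complement block inversion, which the paper simply cites from Horn--Johnson (0.7.3.1), and part (ii) combines Lemma~\ref{La:R_Non-singular} with items (ii) and (iii) of Theorem~\ref{Tm:Kyp_Hyper_Pos_W}. Your explicit arguments for the invertibility of $D$ (from $D+D^*\succcurlyeq T\succ 0$ in the lower-right block) and of $A$ (from the upper-left block together with the PBH test at $\lambda=0$) fill in a step that the paper's one-line proof leaves implicit, since Lemma~\ref{La:R_Non-singular} by itself only gives non-singularity of $R_F$.
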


Indeed, the first part is essentially a re-writing of \cite[Eq. (0.7.3.1)]{HJ1}. The second part is
immediate from items (ii), (iii) of Theorem \ref{Tm:Kyp_Hyper_Pos_W} along with Lemma
\ref{La:R_Non-singular}.

\subsection{Two kinds of Inversion}
\label{Subsec:Two_Inversions}
 
First, we recall an important subset of $\mathcal{P}$ functions: Positive Real Odd functions (a.k.a. Lossless
Positive or Foster), denoted by $\mathcal{PO}$. For details see e.g. \cite[Theorem 2.7.4]{AnderVongpa1973},
\cite[Ch 8, items 36-50]{Belev1968}, \cite[Section 4.2]{CohenLew2007}, \cite{Lewk2021a} and
\cite[p. 36]{Wohl1969}.  Using Eq. \eqref{eq:Quad_Def_P}, one can describe $\mathcal{PO}$ functions as,
\begin{equation}\label{eq:Quad_Def_PO}
\left(\begin{smallmatrix}F(s)\\~\\I_m\end{smallmatrix}\right)^*
\left(\begin{smallmatrix}0&&I_m\\~\\I_m&&0\end{smallmatrix}\right)
\left(\begin{smallmatrix}F(s)\\~\\I_m\end{smallmatrix}\right)~
\left\{
\begin{smallmatrix}
~&\preccurlyeq 0,&&\forall s\in\C_L\\~\\
~&=0&&\forall s\in{i}\R
\\~\\
~&\succcurlyeq 0&&\forall s\in\C_R~.
\end{smallmatrix}
\right.
\end{equation}
From part {\bf a} of Theorem \ref{Tm:Set_Of_P_Functions} we know that the set $\mathcal{P}$ is closed under
inversion. It turns out that that its subset $\mathcal{PO}$ is closed under inversion as well: Note that in
Eq.  \eqref{eq:Quad_Def_PO} $F\preccurlyeq 0$ is equivalent to $F^{-1}\preccurlyeq 0$, whenever the inverse
exists. The same is true for ``$=$" and for ``$\succcurlyeq 0$".
\smallskip

In fact, since each $\mathcal{P}$ and $\mathcal{PO}$,
is closed under inversion, then so is the set \mbox{$\mathcal{P}\smallsetminus\mathcal{PO}$.}
\smallskip

We next illustrate the fact that item (iii) of Theorem \ref{Tm:Kyp_Hyper_Pos_W}
is not self-evident.

\begin{Ex}\label{Ex:SP_Inversion}
{\rm
We shall next show that for arbitrary ${\color{blue}T}$, \mbox{$I_m\succ{\color{blue}T}\succcurlyeq 0$},
the following holds
\begin{equation}\label{eq:Inclusions}
\mathcal{HP}_{\color{blue}T}\subset\mathcal{SP}\subset\{\mathcal{P}\smallsetminus\mathcal{PO}\},
\end{equation}
and each inclusion is strict. Then note that, as rational function, each of these three sets is closed
under inversion. In contrast, in the framework of realization arrays, only the two extreme sets are
closed under matrix-inversion, but not the set $\mathcal{SP}$. Here are the details.
\smallskip

Consider Eq. \eqref{eq:Inclusions}: The strict inclusion on the left-hand side follows from Proposition
\ref{Pn:SP_and_HP}. For the strict inclusion on the right-hand side, by part {\bf a.} of Definition
\ref{Dn:SP}, on $i\R$, $\mathcal{SP}$ functions can have neither poles nor zeros.
\smallskip

{\em Inversion of Functions:}\\
Here we note that for each of the three sets one has that when $F(s)$ belongs to it, then so is
$(F(s))^{-1}$:~ For $\mathcal{HP}_{\color{blue}T}$ functions see item (i) of Theorem
\ref{Tm:Set_Of_HP_Functions}.\\
For $\mathcal{SP}$, combine Definition \ref{Dn:SP} with part {\bf a} of Theorem
\ref{Tm:Set_Of_P_Functions}.\\
For the set on the right-hand side, see the discussion below Eq. \eqref{eq:Quad_Def_PO}.
\smallskip

{\em Inversion of Realization arrays:}\\
From items (ii) and (iii) of Theorem \ref{Tm:Kyp_Hyper_Pos_W} we know that if $R_F$ is a minimal
realization of $F\in\mathcal{HP}_{\color{blue}T}$ then as a matrix is non-singular and if we denote
\mbox{$R_{\hat{F}}:={R_F}^{-1}$} then $R_{\hat{F}}$ is a realization of $\hat{F}$, another function
within the same $\in\mathcal{HP}_{\color{blue}T}$.
\smallskip

To see that in terms of realization arrays, the set $\mathcal{P}$ is closed under inversion, whenever
$R_F$ is non-singular, multiply Eq. \eqref{eq:Original_KYP} by $({R_F}^{-1})^*$ and ${R_F}^{-1}$ from
the left and from the right, respectively. For $\mathcal{PO}$ functions, repeat the same process
recalling that in this case the right-hand side of Eq. \eqref{eq:Original_KYP} vanishes, see e.g.
\cite{ag1}. As before since both $\mathcal{P}$ and $\mathcal{PO}$ are
closed under inversion, then so is the set \mbox{$\{\mathcal{P}\smallsetminus\mathcal{PO}\}$.}
\smallskip

Finally, in the framework of realization arrays, the set $\mathcal{SP}$ is {\em not} closed under
inversion: Return to item {\bf b.} in Example \ref{Ex:Caution_Inversion} and take the scalar
$\mathcal{SP}$ function $f_{+}(s)=\frac{1}{s+1}$ and its balanced realization is
\mbox{$R_{f_+}=\left({\footnotesize\begin{array}{r|r}-1&1\\ \hline 1&0\end{array}}\right)$.} Now let
\mbox{$R_{\hat{f}_{+}}:={R_{f_{+}}}^{-1}=\left({\footnotesize\begin{array}{c|c}0&1\\
\hline 1&1\end{array}}\right)$,} which is a realization of the rational function
\mbox{$\hat{f}(s)=\frac{1}{s}+1$.} It turns out that this \mbox{$\hat{f}(s)$} is in
\mbox{$\mathcal{P}\smallsetminus\left\{\mathcal{SP}\bigcup\mathcal{PO}\right\}$.}
\smallskip

One concludes that in
Eq. \eqref{eq:Inclusions} only the two extreme sets are closed under both kinds of inversion.
}
$\T$
\end{Ex}
\smallskip

In the Introduction we pointed out that in the 1970's it was recognized that $\mathcal{P}$ functions can serve
as a model for linear time-invariant {\em passive} systems. In fact, this was a consequence of earlier
earlier development in the area of electric circuits. In \cite{Brune1} it was shown that the driving point
impedance of an arbitrary $R-L-C$ electrical circuit, is a rational $\mathcal{P}$ function. The fact that every
rational $\mathcal{P}$ function may be realized by an $R-L-C$ electrical circuit, was established in
\cite{BottDuff1949} (in the matrix-valued case one may in addition need transformers and gyrators). We next
illustrate the fact that by restricting the structure of the $R-L-C$ circuits, the resulting driving point
impedance is of rational $\mathcal{HP}_{\beta}$ functions.

\begin{figure}[H]
\centering
\begin{minipage}{0.50\linewidth}
{\rm
Our starting point is the electrical circuit to the right: Its driving point impedance is a rational function
$Z_{\rm in}(s)$ and with a corresponding realization $R_Z$. Then from the inverses ${R_Z}^{-1}$ and
$(Z_{\rm in}(s))^{-1}$ we obtain $\hat{Y}$ and $Y_{\rm in}$ respectively. Finally, from ${R_{Y_{\rm in}}}^{-1}$,
one obtains yet another driving point impedance, which in turn corresponds to the circuit in Figure
\ref{Fig:Another_Point_Impedance_Degree_One}.
}
\end{minipage}\quad\begin{minipage}{0.47\linewidth}
\begin{tikzpicture}[scale=1.3]
   \draw[color=black, thick]
      (0,0) to [short,o-] (3.6,0){} 
      (-0.1,0.6) node[]{\large{$\mathbf{Z}_{\rm\bf in}~~\mathbf{\rightarrow}$}}
     (0,1.2) to [short,o-] (0.1,1.2)
     (0.1,1.2)  to [R,l=$\mathbf{R_1}$,](2,1.2)
     (2,1.2)   to node[short]{} (3.6,1.2)
     (3.6,0) to [C,l=$\mathbf{C}$,*-*] (3.6,1.2)
     (2,0) to [R, l=$\mathbf{R_2}$, *-*] (2,1.2)
     ;
\end{tikzpicture}
\caption{
${\rm\bf ~Z}_{\rm\bf in}(s)=R_1+\frac{\frac{1}{C}}{s+\frac{1}{R_2C}}$}
\end{minipage}
\label{Fig:Impedance_Degree_One}
\end{figure}
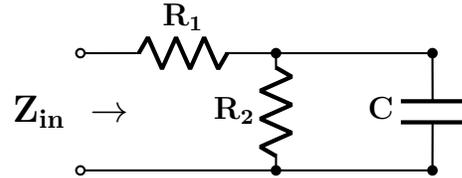

$
\begin{matrix}
{\rm\bf ~Z}_{\rm\bf in}(s)=R_1+\frac{\frac{1}{C}}{s+\frac{1}{R_2C}}
&&&
R_{\rm\bf Z}=\left({\footnotesize\begin{array}{c|c}\frac{-1}{R_2C}&
\frac{1}{\sqrt C}\\ \hline\frac{1}{\sqrt C}&R_1\end{array}}\right)
\\~\\
{\rm\bf ~Y}_{\rm\bf in}(s)=\left({\rm\bf~Z}_{\rm\bf in}(s)\right)^{-1}
=\frac{1}{R_1}\cdot
\frac{s+\frac{1}{CR_1}}
{s+\frac{1}{C}\left(\frac{1}{R_1}+\frac{1}{R_2}\right)}
&&&
R_{\rm\bf Y}=
\left({\footnotesize\begin{array}{c|c}
-\frac{1}{C}\left(\frac{1}{R_1}+\frac{1}{R_2}\right)&\frac{-1}{R_1\sqrt{C}}
\\
\hline
\frac{1}{R_1\sqrt{C}}&\frac{1}{R_1}\end{array}}\right)
\\~\\
{\rm\bf\color{blue}\hat{Y}}(s)=\frac{1}{R_1+R_2}+
\frac{C\frac{{R_2}^2}{(R_1+R_2)^2}}{s+C\left(\frac{1}{R_1}
+\frac{1}{R_2}\right)}
&&&
R_{\rm\bf\color{blue}\hat{Y}}:=\left(R_{\rm\bf Z}\right)^{-1}=
\left({\footnotesize\begin{array}{c|c}
-\frac{CR_1R_2}{R_1+R_2}&\frac{R_2\sqrt{C}}{R_1+R_2}\\
\hline
\frac{R_2\sqrt{C}}{R_1+R_2}&\frac{1}{R_1+R_2}\end{array}}\right)
\\~\\
{\rm\bf\color{red}\hat{Z}}(s)=R_1+\frac{sR_2}{s+CR_2}
&&&
R_{\rm\bf\color{red}\hat{Z}}:=\left(R_{\rm\bf Y}\right)^{-1}=
\left({\footnotesize\begin{array}{c|c}
-CR_2&-R_2\sqrt{C}\\ \hline R_2\sqrt{C}&R_1+R_2
\end{array}}\right).
\end{matrix}
$

\begin{figure}[H]
\centering
\begin{minipage}{0.48\linewidth}
{\rm
The function ${\rm\bf\color{red}\hat{Z}}(s)$ may be realized as the
driving point impedance of the circuit in Figure
\ref{Fig:Another_Point_Impedance_Degree_One} by substituting the value
\mbox{$L=\frac{1}{C}$}.
}
\end{minipage}
\quad
\begin{minipage}{0.47\linewidth}
\begin{tikzpicture}[scale=1.3]
   \draw[color=black, thick]
      (0,0) to [short,o-] (3.6,0){} 
      (-0.1,0.6) node[]{\large{$\mathbf{\color{red}{\hat Z}}~~\mathbf{\rightarrow}$}}
     (0,1.2) to [short,o-] (0.1,1.2)
     (0.1,1.2)  to [R,l=$\mathbf{R_1}$,](2,1.2)
     (2,1.2)   to node[short]{} (3.6,1.2)
     (3.6,0) to [L,l=$\mathbf{L}$,*-*] (3.6,1.2)
     (2,0) to [R, l=$\mathbf{R_2}$, *-*] (2,1.2)
     ;
\end{tikzpicture}
\end{minipage}
\caption{${\rm\bf\color{red}\hat Z}=R_1+
{\frac{sR_2}{s+\frac{R_2}{L}}}_{|_{L=\frac{1}{C}}}=R_1+\frac{sR_2}{s+CR_2}$}
\label{Fig:Another_Point_Impedance_Degree_One}
\end{figure}

\begin{Ex}\label{Ex:Generating_Functions_Inversions}
{\rm
A straightforward computation reveals that in Figure \ref{Fig:Impedance_Degree_One},
\mbox{${\rm\bf ~Z}_{\rm\bf in}(s)=R_1+\frac{\frac{1}{C}}{s+\frac{1}{R_2C}}$} is
a $\mathcal{HP}_{\color{blue}\beta}$ function with
$~
{\scriptstyle\color{blue}\beta}=\left\{\begin{smallmatrix}2\left(R_1+\frac{1}{R_1}\right)^{-1}&&&
R_1\in\left(0,~\sqrt{\scriptstyle(\frac{R_2}{2})^2+1}-{\scriptstyle\frac{R_2}{2}}\right]
\\~\\2\left(R_1+R_2+\frac{1}{R_1+R_2}\right)^{-1}&&&R_1>
\left({\scriptstyle\sqrt{(\frac{R_2}{2})^2+1}}-{\scriptstyle\frac{R_2}{2}}\right).
\end{smallmatrix}\right.
$
\smallskip

Furthermore, upon combining item (i) of Theorem \ref{Tm:Set_Of_HP_Functions} together with item
(iii) of Theorem \ref{Tm:Kyp_Hyper_Pos_W}, it follows that the three subsequent
functions, belong to the same $\mathcal{HP}_{\color{blue}\beta}$.
}
$\T$
\end{Ex}

Note that in item (ii) of Proposition \ref{Pn:SP_and_HP}, the right-hand side of Eq.
\eqref{eq:Original_KYP} is positive definite. In contrast, in Eq. \eqref{eq:HP_Delta_KYP} (or Eq.
\eqref{eq:M_HP_beta}) the right-hand side can be positive {\em semi}-definite. In fact, there is
no contradiction: First there is inequality in Eq. \eqref{eq:HP_Delta_KYP}. Second, for each case,
one can choose a different $H$. Third, Proposition \ref{Pn:SP_and_HP} focuses on
$\mathcal{HP}_{\color{blue}T}$ functions with \mbox{$I_m\succ{\color{blue}T}\succ 0$.} In principle,
characterization of $\mathcal{HP}_{\color{blue}T}$ functions through their realization, is well
defined even for {\em singular} ${\color{blue}T}$, see e.g.
Theorem \ref{Tm:Kyp_Hyper_Pos_W}. This is next illustrated.

\begin{Ex}\label{Ex:HP_T_with_T_singular}
{\rm
We here explore an example of $F\in\mathcal{HP}_{\color{blue}T}$, where ${\color{blue}T}$ is in
\mbox{$\{\overline{\mathbf P}_m\smallsetminus\mathbf{P}_m\}$.}\\
Consider the following $2\times 2$-valued function along with its realization
\begin{equation}\label{eq:Example_Balanced_Realization}
F(s)=\left(\begin{smallmatrix}\frac{{\gamma}^2(2s+3)}{(s+1)(s+2)}+1&&\frac{{\gamma}^2(2s+3)}{(s+1)(s+2)}
\\~\\\frac{{\gamma}^2(2s+3)}{(s+1)(s+2)}&&\frac{{\gamma}^2(2s+3)}{(s+1)(s+2)}\end{smallmatrix}\right)
\quad\quad\quad R_F={\footnotesize\left(\begin{array}{rr|rr}-1&0&\gamma&\gamma\\ 0&-2&\gamma&\gamma
\\ \hline\gamma&\gamma&1&0\\\gamma&\gamma&0&0\end{array}\right)}, 
\end{equation}
where $\begin{smallmatrix}\gamma\end{smallmatrix}\geq 0$ is a parameter.
\smallskip

{\bf a.}~
From Eq. \eqref{eq:M_HP_beta} it follows that whenever $F(s)$ is Hyper-Positive, it implies
that \mbox{$(D+D^*)\in\mathbf{P}_m$.} Since here 
\mbox{$(D+D^*)\in\{\overline{\mathbf P}_2\smallsetminus\mathbf{P}_2\}$,} for all
\mbox{$\begin{smallmatrix}\gamma\end{smallmatrix}\geq 0$}, this $F(s)$ is in fact in
\mbox{$\mathcal{P}\smallsetminus\mathcal{HP}_{\color{blue}\beta}$,} whenever 
\mbox{$\begin{smallmatrix}{\color{blue}\beta}\end{smallmatrix}>0$.} Strictly speaking for
\mbox{$\begin{smallmatrix}\gamma\end{smallmatrix}=0$} this is a non-minimal realization of zero degree
function, and for \mbox{$\begin{smallmatrix}\gamma\end{smallmatrix}>0$,} the realization is in fact
balanced (see Subsection \ref{Subsec:Balanced}).
\smallskip

Now, partitioning Eq. \eqref{eq:HP_Delta_KYP} as \mbox{Left$\succcurlyeq$Right} and substituting
$H=I_2$ yields 
\mbox{${\rm Left}=\left(\begin{smallmatrix}2&0&0&0\\0&4&0&0\\0&0&2&0\\0&0&0&0\end{smallmatrix}\right)$,}
and for \mbox{${\color{blue}T}=\left(\begin{smallmatrix}\frac{1}{2}&&0\\~\\0&&0\end{smallmatrix}\right)$,} 
\mbox{${\rm Right}=\left(\begin{smallmatrix}
\frac{1}{2}{\gamma}^2&\frac{1}{2}{\gamma}^2&\frac{1}{2}\gamma&0\\
\frac{1}{2}{\gamma}^2&\frac{1}{2}{\gamma}^2&\frac{1}{2}\gamma&0\\
\frac{1}{2}\gamma~&\frac{1}{2}\gamma~&1~&0\\0~~&0~~&0~&0\end{smallmatrix}\right)$.} 
In particular \mbox{${\rm Right}\in\overline{\mathbf P}_4$,} is of
\mbox{${\rm rank}=2$.}\\
Now, it is easy to verify that Eq. \eqref{eq:HP_Delta_KYP} is satisfied for all
\mbox{$\begin{smallmatrix}\gamma\end{smallmatrix}\in[0, \begin{smallmatrix}\frac{4}{3}
\end{smallmatrix}]$.}
\smallskip

{\bf b.}~ Denote \mbox{$\Phi(s):=\left(\begin{smallmatrix}1&&
\frac{{\gamma}^2(2s+3)}
{s^2+(2{\gamma}^2+3)s+3{\gamma}^2+2}
\\~\\0&&-1\end{smallmatrix}\right)$,} with
\mbox{$\begin{smallmatrix}\gamma\end{smallmatrix}\geq 0$,} parameter.
Using $F(s)$ from Eq. \eqref{eq:Example_Balanced_Realization}, we construct the following
$2\times 2$-valued function,
\[
F_1(s):={\Phi(s^*)}^*F(s)\Phi(s)=\left(\begin{smallmatrix}
\frac{s^2+(2{\gamma}^2+3)s+3{\gamma}^2+2}{(s+1)(s+2)}&&
0\\~\\0&&
\frac{{\gamma}^2(2s+3)}
{s^2+(2{\gamma}^2+3)s+3{\gamma}^2+2}
\end{smallmatrix}\right).
\]
Note now that in $F_1(s)$, the lower-right entry is a (scalar) $\mathcal{P}$ function with a zero at
infinity, so it cannot be Hyper-Positive. In contrast, the upper-left entry is in
$\mathcal{HP}_{\color{blue}\beta}$ with
\[
\begin{smallmatrix}{\color{blue}\beta}\end{smallmatrix}=
\left(
\begin{smallmatrix}\frac{1}{2}\end{smallmatrix}(
\begin{smallmatrix}\frac{3}{2}{\gamma}^2+1\end{smallmatrix}+
(
\begin{smallmatrix}\frac{3}{2}{\gamma}^2+1\end{smallmatrix})^{-1}\right)^{-1}
\quad\quad\forall~\begin{smallmatrix}\gamma\end{smallmatrix}\geq 0.
\]
Roughly speaking, when a function $F(s)$ is in $\mathcal{HP}_{\color{blue}T}$ and ${\color{blue}T}$ is
singular, this $F(s)$ behaves like a block-diagonal function with a
$\mathcal{P}\smallsetminus\mathcal{HP}$ part, along with a $\mathcal{HP}_{\color{blue}T}$ part.
\smallskip

The significance of having \mbox{${\color{blue}T}\in\{\overline{\mathbf P}_m\smallsetminus\mathbf{P}_m\}$}
is further discussed in Example \ref{Ex:Truncation} below.
}

$\T$
\end{Ex}

\section{$n, m$ Matrix-Convexity of Realizations of $\mathcal{HP}_{\color{blue}\beta}$ Functions}

The lion-share of this section is devoted to the following question: Under what conditions a matrix-convex 
combination of realizations of a given collection of Hyper-Positive functions, yields a realization of
another Hyper-Positive function.

\subsection{Preliminaries}
To illustrate a difficulty involved, we next demonstrate the fact that the family of realization arrays of
Hyper-Positive functions, is not closed under unitary similarity and thus, strictly speaking, can not be
matrix-convex.

\begin{Ex}\label{Ea:Unitary_Similarity}
{\rm 
Consider a scalar function $f(s)$, along with its balanced realization $R_f$
\[
f(s)=\begin{smallmatrix}\frac{s+2}{s+1}\end{smallmatrix}\quad\quad
R_f
=\left({\footnotesize\begin{array}{r|r}-1&1\\ \hline 1&1\end{array}}\right).
\]
Note that $f\in\mathcal{HP}_{\color{blue}\beta}$ with
\mbox{$\begin{smallmatrix}\beta\end{smallmatrix}=
\begin{smallmatrix}\frac{4}{5}\end{smallmatrix}$.}
\smallskip

Now, from the above $R_f$ we construct another function,
\[
g(s)=\begin{smallmatrix}\frac{s}{1-s}\end{smallmatrix}\quad\quad
R_g
:=\left(\begin{smallmatrix}0&1\\1&0\end{smallmatrix}\right)
R_f
\left(\begin{smallmatrix}0&1\\1&0\end{smallmatrix}\right)
=\left({\footnotesize\begin{array}{r|r}1&1\\ \hline 1&-1\end{array}}\right).
\]
By construction $R_f$ is unitarily similar to $R_g$, but $g(s)$ has a pole in
$\C_R$ (not Hurwitz stable), so it can not be a $\mathcal{P}$ function.
}
$\T$
\end{Ex}

Example \ref{Ea:Unitary_Similarity} implies that this set of realization arrays of all
$\mathcal{HP}_{\color{blue}T}$ cannot be $(n+m)$-matrix-convex. This suggests relaxing the requirement and 
looking for a property which is weaker than $(n+m)$-matrix-convexity, but yet more strict than a simple
convexity (which by Proposition \ref{Pn:Sets_Of_Realizations} is already guaranteed).
Thus, we resort to the following, see \cite[Definition 6.3]{Lewk2021a}, \cite[Definition 5.1]{Lewk2020c}
and \cite[Definition 6.7]{Lewk2024b}.

\begin{Dn}\label{Dn:n,mMatrixConvex}
{\rm
For a natural $k$, let
\[
\begin{matrix}
{\scriptstyle{\Upsilon}}_{n_1}~,~\ldots~,~{\scriptstyle{\Upsilon}_{n_k}}&
{\rm of~dimensions}&{\scriptstyle n_1\times\nu}~,~\ldots~,~
{\scriptstyle n_k\times\nu}
\\~\\
{\scriptstyle{\Upsilon}_{m_1}}~,~\ldots~,~{\scriptstyle{\Upsilon}_{m_k}}&
{\rm of~dimensions}&{\scriptstyle m_1\times\mu}~,~\ldots~,~
{\scriptstyle m_k\times\mu}
\end{matrix}
\]
be so that,
\begin{equation}\label{eq:n,mIsometry}
\sum\limits_{j=1}^k\underbrace{\left(\begin{smallmatrix}{\Upsilon}_{n_j}&&0\\~\\0&&{\Upsilon}_{m_j}
\end{smallmatrix}\right)^*}_{{\Upsilon}_j^*}\underbrace{\left(\begin{smallmatrix}{\Upsilon}_{n_j}&&0
\\~\\0&&{\Upsilon}_{m_j}\end{smallmatrix}\right)}_{{\Upsilon}_j}=\left(\begin{smallmatrix}I_{\nu}&&0
\\~\\0&&I_{\mu}\end{smallmatrix}\right).
\end{equation}
Let $\mathbf{R}$ be of \mbox{ $(n+m)\times(n+m)$} matrices. We shall say that $\mathbf{R}$
is~ $n,m$-{\em matrix-convex}~ if
\begin{equation}\label{eq:DefRgBR}
R_F:=\sum\limits_{j=1}^k
\left({\footnotesize\begin{array}{c|c}
{\begin{smallmatrix}{\Upsilon}_{n_j}\end{smallmatrix}}^*A_j{\begin{smallmatrix}{\Upsilon}_{n_j}\end{smallmatrix}}&
{\begin{smallmatrix}{\Upsilon}_{n_j}\end{smallmatrix}}^*B_j{\begin{smallmatrix}{\Upsilon}_{m_j}\end{smallmatrix}}\\
\hline
{\begin{smallmatrix}{\Upsilon}_{m_j}\end{smallmatrix}}^*C_j{\begin{smallmatrix}{\Upsilon}_{n_j}\end{smallmatrix}}&
{\begin{smallmatrix}{\Upsilon}_{m_j}\end{smallmatrix}}^*D_j{\begin{smallmatrix}{\Upsilon}_{m_j}\end{smallmatrix}}
\end{array}}\right),
\end{equation}
belongs to ${\mathbf R}$ for all :
\[
\begin{matrix}
{\scriptstyle{\Upsilon}_{n_1}}~,~\ldots~,~{\scriptstyle{\Upsilon}_{n_k}}~,
&&
{\scriptstyle\nu}\in[1,~\min(n_1,~\ldots~,~n_k)],
\\~\\
{\scriptstyle{\Upsilon}_{m_1}}~,~\ldots~,~{\scriptstyle{\Upsilon}_{m_k}}~,
&&{\scriptstyle\mu}\in[1,~\min(m_1,~\ldots~,~m_k)],
\end{matrix}
\]
and for all natural $k$. 
$\T$
}
\end{Dn}

Note that the notion of
\mbox{$n,m$-{\em matrix-convexity}} is indeed intermediate between (the more strict)
$(n+m)$-{\em matrix-convexity} (see part {\bf a.} of Definition \ref{Dn:n-MatrixConvex}), and the (weaker)
classical convexity.
\bigskip

We now pose the following question:~ For a natural parameter $k$, let 
\mbox{$R_{F_1}~,~\ldots~,~R_{F_k}$} be a collection of \mbox{ $(n+m)\times(n+m)$}
realization of functions $F_1(s)~,~\ldots~,~F_k(s)$ i.e.
\begin{equation}\label{eq:RGj}
R_{F_j}=\left({\footnotesize\begin{array}{l|r}A_j&B_j\\ \hline C_j&D_j\end{array}}\right)
\quad\quad\quad j=1,~\ldots~,~k.
\end{equation}
Assume that $F_1(s)~,~\ldots~,~F_k(s)$ are all in $\mathcal{HP}_{\color{blue}T}~$.
\smallskip

Let the ``matrix" $R_F$ be defined as in Eq.  \eqref{eq:DefRgBR}. Assume now that this $R_F$
is a realization of an \mbox{$m\times m$-valued} rational function $F(s)$. Under what conditions
this $F(s)$ belongs to the same $\mathcal{HP}_{\color{blue}T}$?
\smallskip

To proceed, we need to recall the notion of ``internally passive" realization:
First, in quadratic form, Eq. \eqref{eq:Original_KYP} (the KYP Lemma) was formulated as
having $H\in\mathbf{P}_n$ so that
\[
\left(\begin{smallmatrix}R_F\\~\\I_{n+m}\end{smallmatrix}\right)^*
\left(\begin{smallmatrix}
~0&&0&&-H&&0\\~\\~0&&0&&~0&&I_m\\~\\-H&&0&&~0&&0\\~\\~0&&I_m&&~0&&0\end{smallmatrix}\right)
\left(\begin{smallmatrix}R_F\\~\\I_{n+m}\end{smallmatrix}\right)=
\begin{smallmatrix}Q\end{smallmatrix}\in\overline{\mathbf P}_{n+m}~.
\]
Now, using the same $H\in\mathbf{P}_n$, we construct the $\mathbf{P}_{n+m}$ matrix 
\mbox{$L:=\left(\begin{smallmatrix}H^{-\frac{1}{2}}&&0\\~\\0&&I_m\end{smallmatrix}\right)$,}
and apply the following change of coordinates
\begin{equation}\label{eq:Change_Coordinates}
\left(\begin{smallmatrix}R_F\\~\\I_{n+m}\end{smallmatrix}\right)~\longrightarrow~
\left(\begin{smallmatrix}\hat{R}_F\\~\\I_{n+m}\end{smallmatrix}\right):=
\left(\begin{smallmatrix}L^{-\frac{1}{2}}&&0\\~\\0&&L^{-\frac{1}{2}}\end{smallmatrix}\right)
\left(\begin{smallmatrix}R_F\\~\\I_{n+m}\end{smallmatrix}\right)
\begin{smallmatrix}L\end{smallmatrix},
\end{equation}
results in,
\begin{equation}\label{eq:Interally_Passive_Kyp_P}
\left(\begin{smallmatrix}\hat{R}_F\\~\\I_{n+m}\end{smallmatrix}\right)^*
\left(\begin{smallmatrix}
~0&&0&&-I_n&&0\\~\\~0&&0&&~0&&I_m\\~\\-I_n&&0&&~0&&0\\~\\~0&&I_m&&~0&&0\end{smallmatrix}\right)
\left(\begin{smallmatrix}\hat{R}_F\\~\\I_{n+m}\end{smallmatrix}\right)^*
=\begin{smallmatrix}LQL\end{smallmatrix}\in\overline{\mathbf P}_{n+m}~.
\end{equation}

\begin{Rk}\label{Rk:Internally_Passive_P}
{\rm 
A rational function whose realization satisfies Eq. \eqref{eq:Interally_Passive_Kyp_P}
is called ``internally passive", see e.g.\begin{footnote}{Originally formulated in the framework
of Eq. \eqref{eq:Original_KYP}.}\end{footnote} \cite[Definition 3]{Will1976}.
}
$\T$
\end{Rk}

We next extend the use of this terminology from $\mathcal{P}$ to Hyper-Positive functions: Let $F(s)$ be a
$\mathcal{HP}_{\color{blue}T}$ function. Thus, its realization satisfies Eq. \eqref{eq:HP_Delta_KYP} (or
Eq.  \eqref{eq:M_HP_beta}, in the special case of $\mathcal{HP}_{\color{blue}\beta}$). Recall that the
quadratic form of the characterization-through-realization of $\mathcal{HP}_{\color{blue}\beta}$ functions,
was given in Eq. \eqref{eq:KYP_Quadratic_HP_T}, as
\[
\left(\begin{smallmatrix}R_F\\~\\I_{n+m}\end{smallmatrix}\right)^*
\left(\begin{smallmatrix}
0&&0&&-H&&0\\~\\0&&-{\color{blue}T}&&0&&I_m\\~\\-H&&0&&0&&0\\~\\0&&I_m&&0&&-{\color{blue}T}
\end{smallmatrix}\right)
\left(\begin{smallmatrix}R_F\\~\\I_{n+m}\end{smallmatrix}\right)
=\begin{smallmatrix}Q\end{smallmatrix}\in\overline{\mathbf P}_{n+m}~.
\]
Using the same change of coordinates as in Eq. \eqref{eq:Change_Coordinates}, yields
\[
\left(\begin{smallmatrix}\hat{R}_F\\~\\I_{n+m}\end{smallmatrix}\right)^*
\left(\begin{smallmatrix}
0&&0&&-I_n&&0\\~\\0&&-{\color{blue}T}&&0&&I_m\\~\\-I_n&&0&&0&&0\\~\\0&&I_m&&0&&-{\color{blue}T}
\end{smallmatrix}\right)
\left(\begin{smallmatrix}\hat{R}_F\\~\\I_{n+m}\end{smallmatrix}\right)
=\begin{smallmatrix}LQL\end{smallmatrix}\in\overline{\mathbf P}_{n+m}~.
\]
or in the special case where 
\mbox{${\color{blue}T}=\begin{smallmatrix}{\color{blue}\beta}\end{smallmatrix}I_m$,}
\begin{equation}\label{eq:Internally_Passive_Kyp_HP}
\left(\begin{smallmatrix}\hat{R}_F\\~\\I_{n+m}\end{smallmatrix}\right)^*
\left(\begin{smallmatrix}
0&&0&&-I_n&&0\\~\\0&&-{\color{blue}\beta}I_m&&0&&I_m\\~\\-I_n&&0&&0&&0
\\~\\0&&I_m&&0&&-{\color{blue}\beta}I_m\end{smallmatrix}\right)
\left(\begin{smallmatrix}\hat{R}_F\\~\\I_{n+m}\end{smallmatrix}\right)
=\begin{smallmatrix}LQL\end{smallmatrix}\in\overline{\mathbf P}_{n+m}~.
\end{equation}
And this will be referred to a an ``internally passive" realization of a
$\mathcal{HP}_{\color{blue}T}$ or
$\mathcal{HP}_{\color{blue}\beta}$ function.
\smallskip

We conclude this subsection with a simple observation from \cite{Lewk2024b}.

\begin{Cy}\label{Cy:Balanced_Internally_Passive}
Let $F(s)$ be a function in $\mathcal{HP}_{\color{blue}\beta}$
\mbox{($\begin{smallmatrix}{\color{blue}\beta}\end{smallmatrix}\in[0,~1)$),} and let $R_F$
be a corresponding state-space realization. Then whenever $R_F$ is balanced, it is internally
passive. Namely, in this case $R_F=\hat{R}_F$ and satisfies Eq. \eqref{eq:Internally_Passive_Kyp_HP}.
\end{Cy}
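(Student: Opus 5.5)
The plan is to show that when $R_F$ is balanced, $H=I_n$ solves the Hyper-Positive KYP inequality \eqref{eq:M_HP_beta}. The change of coordinates \eqref{eq:Change_Coordinates} is then the identity, so $R_F=\hat{R}_F$ and \eqref{eq:Internally_Passive_Kyp_HP} holds with $LQL=Q$. By Theorem \ref{Tm:Kyp_Hyper_Pos_W}(i), in the form of Lemma \ref{La:Quadratic_KYP_HP_T}, and since a balanced realization is minimal, the set
\[
\mathcal{S}:=\{H\in\mathbf{P}_n~:~ H \mbox{ satisfies } \eqref{eq:M_HP_beta}\}
\]
is non-empty. It is convex, because \eqref{eq:M_HP_beta} is affine in $H$, and closed. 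The task is therefore to locate $I_n$ inside $\mathcal{S}$.

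\textbf{Step 1 (duality).} I would first use item (ii) of Theorem \ref{Tm:Set_Of_HP_Functions} with ${\color{blue}T}={\color{blue}\beta}I_m$. There the weight $(I_m-{\color{blue}T}^2)^{\pm\frac12}$ is scalar and cancels, so $F\in\mathcal{HP}_{\color{blue}\beta}$ if and only if $(F(s^*))^*\in\mathcal{HP}_{\color{blue}\beta}$. At the realization level, $(F(s^*))^*$ is realized by the dual array $\left({\footnotesize\begin{array}{c|c}A^*&C^*\\ \hline B^*&D^*\end{array}}\right)$. I would then verify directly, by a congruence of \eqref{eq:M_HP_beta} with $\mathrm{diag}(H^{-1},I_m)$ followed by a Schur-complement manipulation, that $H\in\mathcal{S}$ implies $H^{-1}$ solves the dual inequality.

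\textbf{Step 2 (balancing gives a self-duality).} Next I would use the structure of a balanced realization. The Gramians are equal, $H_{\rm cont}=H_{\rm obs}=\Sigma$, which is diagonal. In this case there is a signature-type matrix $S$ ($S=S^*=S^{-1}$, commuting with $\Sigma$) with $A^*=SAS$, $C^*=SB$ and $D^*=D$ on the relevant blocks. This holds at least in the scalar case, as in Examples \ref{Ex:Caution_Inversion} and \ref{Ea:Unitary_Similarity}, and for the matrix case I would establish it via the Gramian equations. Combining this with Step 1 gives an involution $H\mapsto SH^{-1}S$ on $\mathcal{S}$.

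\textbf{Step 3 (extremal solutions).} I would take the minimal and maximal elements $H_-\preccurlyeq H\preccurlyeq H_+$ of $\mathcal{S}$. These exist from the Riccati form of \eqref{eq:M_HP_beta}, since $D+D^*\succ 0$ by Remark \ref{Rk:KYP_HP}(d). The involution of Step 2 exchanges them, so $H_+=SH_-^{-1}S$. Any $H$ with $H_-\preccurlyeq H\preccurlyeq H_+$ lies in $\mathcal{S}$, by convexity together with the ordering of Riccati solutions. It therefore suffices to show $H_-\preccurlyeq I_n\preccurlyeq SH_-^{-1}S$. Since $S$ is a signature commuting with $\Sigma$, this would follow from $H_-\preccurlyeq I_n$ alone if $H_-$ commutes with $S$.

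\textbf{Main obstacle.} The hardest step is Step 3. Proving $H_-\preccurlyeq I_n$, and that $H_-$ is compatible with $S$, is exactly where the balancing normalization, and not just some Gramian identity, must enter. My first attempt would be to compare \eqref{eq:M_HP_beta} at $H=I_n$ with the Lyapunov equations $\Sigma A^*+A\Sigma=-BB^*$ and $\Sigma A+A^*\Sigma=-C^*C$. I would try to exhibit the left-minus-right side of \eqref{eq:M_HP_beta} as a sum of positive semidefinite terms involving $\Sigma$ and $D+D^*-{\color{blue}\beta}(I+D^*D)\succ 0$. Should this fail in general, I would fall back on the scalar ($m=1$) case first, as in Example \ref{Ea:Unitary_Similarity}, where $S$ is explicit, and then treat the matrix case through the block-diagonal reduction suggested in Example \ref{Ex:HP_T_with_T_singular}.
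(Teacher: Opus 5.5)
Your plan cannot be completed: its target, $I_n\in\mathcal{S}$ for the same $\beta$, is false in general once $\beta>0$. So the corollary as stated (same $\beta$, $H=I_n$) does not hold. The paper gives no argument of its own here, only a pointer to earlier work, so there is nothing there to rescue it. The paper's own Example~\ref{Ea:Unitary_Similarity} is a counterexample. There $f(s)=\frac{s+2}{s+1}$, and $R_f=\left(\begin{smallmatrix}-1&1\\1&1\end{smallmatrix}\right)$ is balanced, since both Gramians equal $\frac12$. Moreover $f\in\mathcal{HP}_{\beta}$ with $\beta=\frac45$, and this value is optimal because $2f(0)=4=\frac45\,(1+f(0)^2)$. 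For $H=h>0$, the left-hand side minus the right-hand side of \eqref{eq:M_HP_beta} is
\[
\begin{pmatrix} 2h-\frac45 & \frac15-h\\ \frac15-h & \frac25\end{pmatrix},
\qquad
\det=-\Bigl(h-\frac35\Bigr)^2 .
\]
Hence $\mathcal{S}=\{\frac35\}$, and $h=1$ gives determinant $-\frac{4}{25}<0$. This balanced realization therefore does not satisfy \eqref{eq:Internally_Passive_Kyp_HP} with $\beta=\frac45$.

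Your argument breaks at Step~1. For $\beta>0$, the congruence with $\operatorname{diag}(H^{-1},I_m)$ does not produce the dual inequality. The $(1,1)$ block becomes $-AH^{-1}-H^{-1}A^*-\beta H^{-1}C^*CH^{-1}$, whereas the dual inequality has $-\beta BB^*$ in that position. Completing the square gives
\[
2\operatorname{Re}(u^*y)-\beta(\|u\|^2+\|y\|^2)=\frac{1-\beta^2}{\beta}\|u\|^2-\beta\|y-u/\beta\|^2 .
\]
This turns \eqref{eq:M_HP_beta} into a bounded-real inequality, and the duality it induces is $H\mapsto(1-\beta^2)H^{-1}$, not $H\mapsto H^{-1}$. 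For example, for a first-order realization with $B=C$ the endpoints of $\mathcal{S}$ satisfy $h_-h_+=1-\beta^2$; in the example $h_-=h_+=\frac35=\sqrt{1-\beta^2}$. So the natural fixed point of your involution is $\sqrt{1-\beta^2}\,I_n$, not $I_n$, exactly as the example shows. Your duality is correct only at $\beta=0$.

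Two further steps are also unsupported.
\begin{itemize}
\item The sign symmetry $A^*=SAS$, $C^*=SB$, $D^*=D$ in Step~2 forces $F(s)^{T}=F(s)$. It therefore cannot be derived from the Gramian equations for a general $m\times m$ function.
\item In Step~3, the order interval $[H_-,H_+]$ is in general strictly larger than $\mathcal{S}$ when $n\geq 2$. Put $R=D+D^*-\beta(I_m+D^*D)$ and substitute $H=H_-+Z^{-1}$. The inequality becomes a Lyapunov inequality $A_-Z+ZA_-^*+BR^{-1}B^*\preccurlyeq 0$, where $A_-$ is the closed-loop matrix of $H_-$, and $Z_0=(H_+-H_-)^{-1}$ gives equality. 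Take $Z=Z_0+\epsilon vv^*$ with $v$ not an eigenvector of $A_-$. This $Z$ corresponds to a point of $[H_-,H_+]$, but $A_-vv^*+vv^*A_-^*$ is indefinite, so that point violates the inequality.
\end{itemize}
The most one could aim for is $H=I_n$ together with some smaller $\beta'$, which is a different statement.
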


Recall that in \cite[Remark 6.6]{Lewk2024b} it was shown that in fact, a balanced realization is
a (very) special case of internally passive.
\smallskip

\subsection{Proof of Proposition \ref{Pn:Sets_Of_Realizations}} 
This will be done in steps.
\smallskip

{\bf Proof of item (i) of Proposition \ref{Pn:Sets_Of_Realizations}}
\smallskip

To show that this set is convex, substitute Eq. \eqref{eq:KYP_Quadratic_HP_T} in item (ii) of Lemma
\ref{La:Structural_Properties} to obtain \mbox{$q=n+m$,}  and the three following
\mbox{$(n+m)\times(n+m)$} matrices: $E=R_F$,
\mbox{$X=Y=\left(\begin{smallmatrix}0_{n\times n}&~0\\~\\0&-{\color{blue}T}\end{smallmatrix}\right)$}
and \mbox{$V=\left(\begin{smallmatrix}-H&0~\\~\\~0&I_m\end{smallmatrix}\right)$,} with
\mbox{$H\in\mathbf{P}_n$.} Since indeed \mbox{$-X\in\overline{\mathbf P}_{n+m}$,} the claim is established.
\qed
\smallskip

We now return to the problem posed below Eq. \eqref{eq:RGj}. To answer it, one needs to limit the scope in
two ways:\\
(i) The realizations $R_{F_1}$, $\ldots$, $R_{F_k}$ must be all {\em internally passive} and\\
(ii) The weight is essentially scalar, i.e. 
\mbox{${\color{blue}T}=\begin{smallmatrix}{\color{blue}\beta}I_m\end{smallmatrix}$,}
\mbox{$\begin{smallmatrix}{\color{blue}\beta}\end{smallmatrix}\in[0,~1)$,} so the set addressed is
$\mathcal{HP}_{\color{blue}\beta}$.
\smallskip

{\bf Proof of item (ii) of Proposition \ref{Pn:Sets_Of_Realizations}}
\smallskip

We here need to show that this set is {\em $n, m$-matrix-convex}. First, substitute Eq.
\eqref{eq:M_HP_beta} in item (vii) of Lemma \ref{La:Structural_Properties}, to obtain $q=n+m$,
and the three following \mbox{$(n+m)\times(n+m)$} matrices: $E=R_F$,
\mbox{$X=Y=\left(\begin{smallmatrix}0_{n\times n}&~0\\~\\0&-{\color{blue}\beta}I_m\end{smallmatrix}\right)$}
and \mbox{$V=\left(\begin{smallmatrix}-H&0~\\~\\~0&I_m\end{smallmatrix}\right)$,} with
\mbox{$H\in\mathbf{P}_n$.}
\smallskip

Now, the adaptation of the notion of {\em $(n+m)$-matrix-convexity} to {\em $n, m$-matrix-convexity}
is straightforward and thus omitted.
\qed
\smallskip

One can now formulate a technical observation, to be used in
Subsection \ref{subsec:Balanced Truncation and $m, n$-matrix-convexity}.

\begin{La}\label{La:Truncation}
Let $F(s)$ in $\mathcal{HP}_{\color{blue}\beta}$
\mbox{($\begin{smallmatrix}{\color{blue}\beta}\end{smallmatrix}\in[0,~1)$),} 
be a \mbox{$m\times m$-valued}
rational function of McMillan degree $n$. Denote by $R_F$ a corresponding \mbox{$(n+m)\times(n+m)$,} 
{\em internally passive}, realization, i.e. it satisfies Eq. \eqref{eq:Internally_Passive_Kyp_HP}.
\smallskip

Let the
following \mbox{$(n+m)\times(\nu+\mu)$,} with $\nu\in[1, n]$ and $\mu\in[1, m]$, block-diagonal isometry
\begin{equation}\label{eq:Upsilon}
\Upsilon=\left(\begin{smallmatrix}{\upsilon}_n&&0\\~\\0&&{\upsilon}_m\end{smallmatrix}\right)
\quad{\rm where}\quad\begin{smallmatrix}{\upsilon}_n\in\C^{n\times\nu}~~{{\upsilon}_n}^*{\upsilon}_n
=I_{\nu}\\~\\{\upsilon}_m\in\C^{n\times\mu}~~{{\upsilon}_m}^*{\upsilon}_m=I_{\mu}~,\end{smallmatrix}
\end{equation}
be arbitrary.\quad Then, the $(\nu+\mu)\times(\nu+\mu)$ matrix $R_{\hat{F}}$,
\[
R_{\hat{F}}:={\Upsilon}^*R_F{\Upsilon},
\]
may be viewed as an {\em internally passive} realization of $\hat{F}(s)$, a \mbox{$\mu\times\mu$-valued}
rational function, of McMillan degree $\nu$.\\
Furthermore, the resulting $\hat{F}(s)$ is another $\mathcal{HP}_{\color{blue}\beta}$ function with the
same $\begin{smallmatrix}{\color{blue}\beta}\end{smallmatrix}$.
\end{La}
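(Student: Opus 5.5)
The plan is a direct congruence argument on the quadratic form in Eq. \eqref{eq:Internally_Passive_Kyp_HP}, which is Lemma \ref{La:Structural_Properties}(vii) with $k=1$. Write $R_F=\left({\footnotesize\begin{array}{c|c}A&B\\ \hline C&D\end{array}}\right)$ and denote the middle $2(n+m)\times 2(n+m)$ matrix of Eq. \eqref{eq:Internally_Passive_Kyp_HP} by $M_{n,m}$. By internal passivity,
\[
\left(\begin{smallmatrix}R_F\\~\\I_{n+m}\end{smallmatrix}\right)^*M_{n,m}\left(\begin{smallmatrix}R_F\\~\\I_{n+m}\end{smallmatrix}\right)=LQL\succcurlyeq 0 .
\]
Multiply from the right by $\Upsilon$ and from the left by $\Upsilon^*$. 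The right-hand side becomes $\Upsilon^*LQL\Upsilon\succcurlyeq 0$. On the left-hand side, set $\Theta:=\mathrm{diag}(\Upsilon,\Upsilon)$. Then
\[
\left(\begin{smallmatrix}R_F\\~\\I\end{smallmatrix}\right)\Upsilon=\left(\begin{smallmatrix}R_F\Upsilon\\~\\ \Upsilon\end{smallmatrix}\right),
\]
and we want to compare it with $\Theta\left(\begin{smallmatrix}\Upsilon^*R_F\Upsilon\\~\\ I_{\nu+\mu}\end{smallmatrix}\right)$.

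These two vectors differ in the upper block by $(I-\Upsilon\Upsilon^*)R_F\Upsilon$. This is the main obstacle. I will handle it by exploiting the special structure of $M_{n,m}$. It has the form $\left(\begin{smallmatrix}X&V\\V&X\end{smallmatrix}\right)$ with
\[
V=\mathrm{diag}(-I_n,I_m),\qquad X=\mathrm{diag}(0,-\beta I_m).
\]
Both are block-diagonal and scalar on each block, so they commute with $\Upsilon\Upsilon^*$, which is block-diagonal. Expanding, with $P=\Upsilon\Upsilon^*$ and $E=R_F$:
\[
\Upsilon^*(VE+E^*V+E^*XE+X)\Upsilon=V'\hat E+\hat E^*V'+\hat E^*X'\hat E+X'+\Upsilon^*E^*(I-P)X(I-P)E\Upsilon .
\]
Here $\hat E=\Upsilon^*E\Upsilon$, while $V'=\Upsilon^*V\Upsilon$ and $X'=\Upsilon^*X\Upsilon$ are the same diagonal matrices at size $\nu+\mu$. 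The linear terms reduce because $VE\Upsilon$ and $\Upsilon^*V\Upsilon\hat E$ agree after compressing by $\Upsilon^*$, using $\Upsilon^*V=V'\Upsilon^*$. The cross term in $X$ splits via $E^*XE=E^*(P+(I-P))X(P+(I-P))E$, using $[X,P]=0$ and $P^2=P$.

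Since $-X\succcurlyeq 0$, the extra term $\Upsilon^*E^*(I-P)X(I-P)E\Upsilon$ is $\preccurlyeq 0$. It therefore moves to the right-hand side with a plus sign, giving
\[
\left(\begin{smallmatrix}\hat E\\~\\ I\end{smallmatrix}\right)^*M_{\nu,\mu}\left(\begin{smallmatrix}\hat E\\~\\ I\end{smallmatrix}\right)\succcurlyeq\Upsilon^*LQL\Upsilon\succcurlyeq 0 .
\]
This is exactly Eq. \eqref{eq:Internally_Passive_Kyp_HP} for $R_{\hat F}$, with $H=I_\nu$ and weight $\beta I_\mu$. The sign of $X$, i.e.\ $\beta\ge 0$, is precisely what makes the truncation work. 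The scalar-weight assumption is what makes $X$ commute with $P$; a general $T$ would fail here.

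Finally, interpret $R_{\hat F}$ as the realization
\[
\left({\footnotesize\begin{array}{c|c}\upsilon_n^*A\upsilon_n&\upsilon_n^*B\upsilon_m\\ \hline \upsilon_m^*C\upsilon_n&\upsilon_m^*D\upsilon_m\end{array}}\right)
\]
of a $\mu\times\mu$ function $\hat F$ with state dimension $\nu$. Invoke the sufficiency direction of item (i) of Theorem \ref{Tm:Kyp_Hyper_Pos_W}, via Lemma \ref{La:Quadratic_KYP_HP_T} (ii)$\Rightarrow$(iii), with $H=I_\nu\in\mathbf{P}_\nu$. This gives $\hat F\in\mathcal{HP}_{\beta}$ without any minimality requirement.

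Analyticity in $\C_R$ follows from the $(1,1)$ block of the inequality:
\[
-(\hat A+\hat A^*)\succcurlyeq\beta\,\hat C^*\hat C\succcurlyeq 0 ,
\]
so $\mathrm{spec}(\hat A)\subset\overline{\C}_L$. Any imaginary-axis eigenvalues of $\hat A$ are handled as in the classical KYP sufficiency argument, which is already contained in Lemma \ref{La:Quadratic_KYP}. The claim that the McMillan degree is exactly $\nu$ is understood, as in the remark following Proposition \ref{Pn:Sets_Of_Realizations}, as ``at most $\nu$'', since truncation need not preserve minimality.
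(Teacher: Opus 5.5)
Your proof is correct and follows the same route as the paper. You compress the internally passive KYP inequality \eqref{eq:Internally_Passive_Kyp_HP} by the block-diagonal isometry $\Upsilon$, then invoke the minimality-free sufficiency direction (ii)$\Rightarrow$(iii) of Lemma \ref{La:Quadratic_KYP_HP_T} with $H=I_\nu$.

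Where you differ is the key step, and there your version is the sound one. The paper asserts that the compressed form equals $\Upsilon^*Q\Upsilon$ exactly, which tacitly treats $\Upsilon\Upsilon^*$ as the identity. That is harmless for the linear and constant terms, but not for the quadratic term $R_F^*XR_F$. Your splitting $X=PXP+(I-P)X(I-P)$ uses $[X,P]=0$ (scalar weight) and $X\preccurlyeq 0$ ($\beta\geq 0$). It shows that the form of $R_{\hat F}$ exceeds the compressed right-hand side by $\beta\left(\begin{smallmatrix}C\upsilon_n&D\upsilon_m\end{smallmatrix}\right)^*(I_m-\upsilon_m\upsilon_m^*)\left(\begin{smallmatrix}C\upsilon_n&D\upsilon_m\end{smallmatrix}\right)\succcurlyeq 0$. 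This term vanishes when $\beta=0$ or $\mu=m$ (pure state truncation, as in the balanced-truncation application), so the paper's equality is fine there. For $\beta>0$ and $\mu<m$ it is generally nonzero, and only your inequality is true.

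Three minor points:
\begin{itemize}
\item Your opening reference to Lemma \ref{La:Structural_Properties}(vii) ``with $k=1$'' does not literally apply. That item needs $M$ with scalar blocks $xI_q,vI_q,yI_q$ and square $\Upsilon_j$, whereas here $V=\mathrm{diag}(-I_n,I_m)$ and $\Upsilon$ is rectangular. Your argument never uses it, so you can drop it.
\item Reading ``McMillan degree $\nu$'' as ``at most $\nu$'' is correct. It matches the paper's own remark that compressions need not be minimal.
\item Your discussion of imaginary-axis eigenvalues of $\hat A$ is unnecessary. $\mathrm{spec}(\hat A)\subset\overline{\C}_L$ already gives analyticity in the open half-plane $\C_R$, which is all the definition requires.
\end{itemize}
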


Indeed, let $\Upsilon$ be a $(n+m)\times(\nu+\mu)$ isometry, as in
with $\Upsilon$ as in Eq. \eqref{eq:Upsilon} and consider the following transformation
\[
\left(\begin{smallmatrix}R_F\\~\\I_{n+m}\end{smallmatrix}\right)~\longrightarrow~
\left(\begin{smallmatrix}R_{\hat{F}}\\~\\I_{\nu+\mu}\end{smallmatrix}\right):=
\left(\begin{smallmatrix}{\Upsilon}^*&&0\\~\\0&&{\Upsilon}^*\end{smallmatrix}\right)
\left(\begin{smallmatrix}R_F\\~\\I_{n+m}\end{smallmatrix}\right)
\begin{smallmatrix}\Upsilon\end{smallmatrix}.
\]
Consequently, Eq. \eqref{eq:Internally_Passive_Kyp_HP} is substituted by
\[
\left(\begin{smallmatrix}R_{\hat{F}}\\~\\I_{\nu+\mu}\end{smallmatrix}\right)^*
\left(\begin{smallmatrix}
0&&0&&-I_{\nu}&&0\\~\\0&&-{\color{blue}\beta}I_{\mu}&&0&&I_{\mu}\\~\\-I_{\nu}&&0&&0&&0
\\~\\0&&I_{\mu}&&0&&-{\color{blue}\beta}I_{\mu}\end{smallmatrix}\right)
\left(\begin{smallmatrix}R_{\hat{F}}\\~\\I_{\nu+\mu}\end{smallmatrix}\right)
=\begin{smallmatrix}{\Upsilon}^*Q\Upsilon\end{smallmatrix}\in\overline{\mathbf P}_{\nu+\mu}~,
\]
which is indeed an {\em internally passive} realization of an $\mathcal{HP}_{\color{blue}\beta}$
function $\hat{F}(s)$.

\begin{Ex}\label{Ex:Truncation}
{\rm
Recall that in Example \ref{Ex:HP_T_with_T_singular}
we examined the following $2\times 2$-valued function along with its realization
\[
F(s)=\left(\begin{smallmatrix}\frac{{\gamma}^2(2s+3)}{(s+1)(s+2)}+1&&\frac{{\gamma}^2(2s+3)}{(s+1)(s+2)}
\\~\\\frac{{\gamma}^2(2s+3)}{(s+1)(s+2)}&&\frac{{\gamma}^2(2s+3)}{(s+1)(s+2)}\end{smallmatrix}\right)
\quad\quad\quad R_F={\footnotesize\left(\begin{array}{rr|rr}-1&0&\gamma&\gamma\\ 0&-2&\gamma&\gamma
\\ \hline\gamma&\gamma&1&0\\\gamma&\gamma&0&0\end{array}\right)}, 
\]
where $\begin{smallmatrix}\gamma\end{smallmatrix}\geq 0$ is a parameter. It was shown that, on the one hand
for all 
\mbox{$\begin{smallmatrix}\gamma\end{smallmatrix}\in[0, \begin{smallmatrix}\frac{4}{3}\end{smallmatrix}]$,}
$F\in\mathcal{HP}_{\color{blue}T}$ with \mbox{${\color{blue}T}
=\left(\begin{smallmatrix}\frac{1}{2}&&0\\~\\0&&0\end{smallmatrix}\right)$.} On the other hand, whenever 
\mbox{${\color{blue}T}=\begin{smallmatrix}{\color{blue}\beta}\end{smallmatrix}I_2$,} with
\mbox{$\begin{smallmatrix}{\color{blue}\beta}\end{smallmatrix}>0$,}
this $F(s)$ is in 
\mbox{$\mathcal{P}\smallsetminus\mathcal{HP}_{\color{blue}\beta}$,} 
for all $\begin{smallmatrix}\gamma\end{smallmatrix}\geq 0$.
\smallskip

Following Lemma \ref{La:Truncation}, with the above $R_F$ and the isometry 
\mbox{$\begin{smallmatrix}\Upsilon\end{smallmatrix}=
\left(\begin{smallmatrix}1&0&0\\0&1&0\\0&0&1\\0&0&0\end{smallmatrix}\right)$,}
define \mbox{$R_{\hat{f}}:= \begin{smallmatrix}\Upsilon\end{smallmatrix}^*
R_F\begin{smallmatrix}\Upsilon\end{smallmatrix}$,} so that one obtains a scalar function,
\[
\hat{f}(s)=
\begin{smallmatrix}\frac{{\gamma}^2(2s+3)}{(s+1)(s+2)}+1\end{smallmatrix}
\quad\quad\quad R_{\hat{f}}={\footnotesize\left(\begin{array}{rr|r}-1&0&\gamma\\ 0&-2&\gamma
\\ \hline\gamma&\gamma&1\end{array}\right)}. 
\]
Then, when \mbox{$\begin{smallmatrix}\gamma\end{smallmatrix}=0$,} $R_{\hat{f}}$ is a non-minimal realization of
a zero degree function \mbox{$\hat{f}(s)\equiv 1$.} Now, $\forall \begin{smallmatrix}\gamma\end{smallmatrix}>0$,
from \cite[Corollary 3.3.]{AlpayLew2021} or \cite[Proposition 3.1]{AlpayLew2024}, it follows that
$\hat{f}(s)$ is a $\mathcal{HP}_{\color{blue}\beta}$ function with,
\[
\begin{smallmatrix}\frac{1}{\color{blue}\beta}\end{smallmatrix}
=
\sup\limits_{s\in\C_R}\begin{smallmatrix}\frac{{\hat{f}(s)}^*\hat{f}(s)+1}{\hat{f}(s)+(\hat{f}(s))^*}
\end{smallmatrix}=
\begin{smallmatrix}\frac{{\hat{f}(s)}^*\hat{f}(s)+1}{\hat{f}(s)+(\hat{f}(s))^*}
\end{smallmatrix}_{|_{s=0}}=
\begin{smallmatrix}\frac{1}{2}\end{smallmatrix}(
\begin{smallmatrix}\frac{3}{2}{\gamma}^2+1\end{smallmatrix}+
\begin{smallmatrix}\frac{1}{\frac{3}{2}{\gamma}^2+1}\end{smallmatrix}).
\]
}
$\T$
\end{Ex}
\smallskip

We conclude this subsection by pointing out that the idea of Lemma \ref{La:Truncation} can be carried over
to several  $\mathcal{HP}_{\color{blue}\beta}$ functions (with various ${\scriptstyle\color{blue}\beta}$'s).
Namely, a simultaneous truncation of a matrix-convex combination of whole collection of realization of
functions, while guaranteeing that the resulting function is in a prescribed
$\mathcal{HP}_{\color{blue}\beta}~$.

\begin{Pn}\label{Pn:Matrix-Convex_Combination_of_Realizations}
Let \mbox{$F_1(s),~\ldots~,~F_k(s)$} be \mbox{$m_j\times m_j$-valued} ($j=1,~\ldots~,~k$)
$\mathcal{HP}_{\color{blue}{\beta}_j}$ 
\mbox{($\begin{smallmatrix}{\color{blue}\beta}_j\end{smallmatrix}\in[0,~1)$),} functions, of McMillan
degree $n_1,~\ldots~,~n_k$, respectively. Denote by $R_{F_j}$ corresponding
\mbox{$(n_j+m_j)\times(n_j+m_j)$,} {\em internally passive}, realizations.
\smallskip

For
\[
\nu\in[1, \min(n_1,~\ldots~,~n_k)]\quad
and\quad
\mu\in[1, \min(m_1,~\ldots~,~m_k)],
\]
let the matrices,
\begin{equation}\label{eq:Upsilon_j}
\begin{matrix}
{\upsilon}_{n_j}\in\C^{n_j\times\nu}
\\~\\
{\upsilon}_{m_j}\in\C^{m_j\times\mu}
\end{matrix}
~~\quad where \quad~~
\begin{matrix}
\sum\limits_{j=1}^k
{{\upsilon}_{n_j}}^*{\upsilon}_{n_j}=I_{\nu}
\\~\\
\sum\limits_{j=1}^k
{{\upsilon}_{m_j}}^*{\upsilon}_{m_j}=I_{\mu}
\end{matrix}
\quad\quad
j=1,~\ldots~,~k
\end{equation}
be arbitrary. Then,
\begin{equation}\label{eq:Def_Rg_BR_Again}
R_{\hat{F}}:=\left({\footnotesize\begin{array}{c|c}\sum\limits_{j=1}^k
{\begin{smallmatrix}{\upsilon}_{n_j}\end{smallmatrix}}^*A_j{\begin{smallmatrix}{\upsilon}_{n_j}\end{smallmatrix}}&
\sum\limits_{j=1}^k
{\begin{smallmatrix}{\upsilon}_{n_j}\end{smallmatrix}}^*B_j{\begin{smallmatrix}{\upsilon}_{m_j}\end{smallmatrix}}\\
\hline
\sum\limits_{j=1}^k
{\begin{smallmatrix}{\upsilon}_{m_j}\end{smallmatrix}}^*C_j{\begin{smallmatrix}{\upsilon}_{n_j}\end{smallmatrix}}&
\sum\limits_{j=1}^k
{\begin{smallmatrix}{\upsilon}_{m_j}\end{smallmatrix}}^*D_j{\begin{smallmatrix}{\upsilon}_{m_j}\end{smallmatrix}}
\end{array}}\right),
\end{equation}
is an {\em internally passive} realization of $\hat{F}(s)$, a \mbox{$\mu\times\mu$-valued} rational function,
of McMillan degree $\nu$. Furthermore, the resulting $\hat{F}(s)$ is a $\mathcal{HP}_{\color{blue}\hat{\beta}}$
function with
\[
\begin{smallmatrix}{\color{blue}\hat{\beta}}\end{smallmatrix}\geq\min(
\begin{smallmatrix}{\color{blue}{\beta}_1}\end{smallmatrix}~\ldots~
\begin{smallmatrix}{\color{blue}{\beta}_k}\end{smallmatrix}).
\]
\end{Pn}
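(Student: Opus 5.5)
Set $\beta_{\min}:=\min(\beta_1,\ldots,\beta_k)$. The plan combines the truncation mechanism of Lemma \ref{La:Truncation} with the matrix-convex argument of item (vii) of Lemma \ref{La:Structural_Properties}, after first passing to a common weight. Since $\beta_j\geq\beta_{\min}$, Corollary \ref{Cy:HP_beta_Order} gives $F_j\in\mathcal{HP}_{\color{blue}\beta_{\min}}$. At the realization level this is equally direct: each $R_{F_j}$ satisfies Eq. \eqref{eq:Internally_Passive_Kyp_HP} with $\beta_j$, with right-hand side $Q_j:=L_jQ_jL_j\succcurlyeq 0$. Lowering $\beta_j$ to $\beta_{\min}$ only adds the term $(\beta_j-\beta_{\min})\Gamma_j^*\Gamma_j\succcurlyeq 0$. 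So every $R_{F_j}$ is an internally passive realization for the common scalar weight $\beta_{\min}$ (the monotonicity in item (ii) of Lemma \ref{La:Structural_Properties}), with some $\tilde Q_j\succcurlyeq 0$.

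Next form the block-diagonal isometry pieces $\Upsilon_j:=\mathrm{diag}(\upsilon_{n_j},\upsilon_{m_j})$, which are $(n_j+m_j)\times(\nu+\mu)$. By Eq. \eqref{eq:Upsilon_j}, $\sum_j\Upsilon_j^*\Upsilon_j=I_{\nu+\mu}$, and $R_{\hat F}=\sum_j\Upsilon_j^*R_{F_j}\Upsilon_j$, exactly as in Eq. \eqref{eq:DefRgBR}. Write the internally passive inequality in the form of Eq. \eqref{eq:Explicit_Qudratic_Form} with $E=R$, $V=\mathrm{diag}(-I,I)$ and $X=Y=\mathrm{diag}(0,-\beta_{\min}I)$. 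The crucial observation is that $V$, $X$ and $Y$ are block-diagonal with \emph{scalar} blocks, so they intertwine with $\Upsilon_j$:
\[
V_{(j)}\Upsilon_j=\Upsilon_j V_{(\nu,\mu)},\qquad X_{(j)}\Upsilon_j=\Upsilon_j X_{(\nu,\mu)}.
\]
For this reason only $n,m$-block isometries are allowed, and not full $(n+m)$ ones (compare Example \ref{Ea:Unitary_Similarity}).

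Expanding the quadratic form for $R_{\hat F}$, the linear terms split as
\[
VR_{\hat F}+R_{\hat F}^*V=\sum_j\Upsilon_j^*\bigl(V_{(j)}R_{F_j}+R_{F_j}^*V_{(j)}\bigr)\Upsilon_j ,
\]
and the constant term gives $Y=\sum_j\Upsilon_j^*Y_{(j)}\Upsilon_j$. For the quadratic term use $-X\succcurlyeq 0$ and the operator convexity of $Z\mapsto Z^*(-X)Z$ under isometric combinations (Jensen/Cauchy--Schwarz via the column isometry $(\Upsilon_1;\ldots;\Upsilon_k)$):
\[
R_{\hat F}^*(-X)R_{\hat F}\preccurlyeq\sum_j\Upsilon_j^*R_{F_j}^*(-X_{(j)})R_{F_j}\Upsilon_j .
\]
Putting these together yields
\[
\begin{pmatrix}R_{\hat F}\\ I\end{pmatrix}^*M\begin{pmatrix}R_{\hat F}\\ I\end{pmatrix}\succcurlyeq\sum_j\Upsilon_j^*\tilde Q_j\Upsilon_j\succcurlyeq 0 .
\]
This is Eq. \eqref{eq:Internally_Passive_Kyp_HP} for $R_{\hat F}$ with $H=I_\nu$ and weight $\beta_{\min}$. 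By item (i) of Theorem \ref{Tm:Kyp_Hyper_Pos_W} (sufficiency direction, no minimality needed), $\hat F\in\mathcal{HP}_{\color{blue}\beta_{\min}}$, hence $\hat\beta\geq\beta_{\min}$.

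The main obstacle is this quadratic-term step: item (vii) of Lemma \ref{La:Structural_Properties} is stated with equal dimensions and scalar blocks, so it must be redone with the intertwining above and the mixed dimensions. I would first try the compact form $R_{\hat F}=W^*\mathcal{R}W$, with $\mathcal{R}=\mathrm{diag}(R_{F_j})$ (suitably permuted into $n$- and $m$-parts) and $W$ the stacked isometry. Then the bound is $W^*\mathcal{R}^*WW^*(-X)WW^*\mathcal{R}W\preccurlyeq W^*\mathcal{R}^*(-X)\mathcal{R}W$, using $WW^*\preccurlyeq I$ and the fact that $WW^*$ commutes with the block-scalar $X$. A secondary point is the claim that $\hat F$ has McMillan degree exactly $\nu$. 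I would read this as ``realized by a $\nu$-dimensional state'' (degree at most $\nu$), as in Lemma \ref{La:Truncation}, since minimality is not preserved in general (see the remark following Proposition \ref{Pn:Sets_Of_Realizations}).
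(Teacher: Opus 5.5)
Your proposal is correct. It follows the route the paper indicates, namely adapting the proof of item (vii) of Lemma \ref{La:Structural_Properties} to block-diagonal $n,m$ isometries, and it supplies what the paper's one-line sketch omits: the preliminary reduction to the common weight $\beta_{\min}$, which is what makes $V$, $X$, $Y$ intertwine with every $\Upsilon_j$, and the treatment of the quadratic term as a Jensen-type \emph{inequality}. That inequality uses $-X\succcurlyeq 0$ together with the fact that $WW^*\preccurlyeq I$ commutes with $\mathrm{diag}(X_{(j)})$, and it is the correct replacement for the exact cancellation $\Theta_j^*M\Theta_l=\delta_{j,l}\Theta_j^*M\Theta_j$ asserted in item (vii), which fails whenever $\Upsilon_j^*\Upsilon_l\neq 0$.

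Your reading of ``McMillan degree $\nu$'' as ``at most $\nu$'' is also the right one. For example, the realizations $\left(\begin{smallmatrix}-1&1\\1&1\end{smallmatrix}\right)$ and $\left(\begin{smallmatrix}-1&-1\\-1&1\end{smallmatrix}\right)$ of $\frac{s+2}{s+1}$ are both internally passive. Combining them with $\upsilon_{n_j}=\upsilon_{m_j}=\frac{1}{\sqrt2}$ gives $\left(\begin{smallmatrix}-1&0\\0&1\end{smallmatrix}\right)$, which realizes the constant function $1$, of degree $0$.
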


This result follows from a straightforward adaption of the proof of item (vii) of Lemma
\ref{La:Structural_Properties} from $(n+m)$-matrx-convexity to $~n, m$-matrix-convexity.

\subsection{Two kinds of Matrix-convexity}

In Subsection \ref{Subsec:Two_Inversions} we showed that for all \mbox{$I_m\succ{\color{blue}T}\succcurlyeq 0$,}
the set $\mathcal{HP}_{\color{blue}T}$ is closed under inversion, in two senses, (i) as $m\times m$-valued
rational function and as (ii) $(n+m)\times(n+m)$ realization arrays. In this subsection, we illustrate how 
this idea can be carried over from inversion to matrix-convexity. To this end, we need to restrict the
discussion to \mbox{${\color{blue}T}={\scriptstyle\color{blue}\beta}I_m$}, see
Proposition \ref{Pn:Matrix-Convex_Combination_of_Realizations}
\smallskip

We next illustrate how, by applying matrix-convex operations, one can construct, from four given 
$\mathcal{HP}_{\color{blue}\beta}$ functions additional ones, with the same 
$\begin{smallmatrix}{\color{blue}\beta}\end{smallmatrix}$. This is done in both setups, of rational
functions and of realization arrays.

\begin{Ex}\label{Ex:Generating_Functions_B}
{\rm
We now return to the four inter-related $\mathcal{HP}_{\color{blue}\beta}$ functions, given just before
Figure \ref{Fig:Another_Point_Impedance_Degree_One}.
\smallskip

{\bf a.}~ First, for rational functions we employ item (v) of Theorem \ref{Tm:Set_Of_HP_Functions}. To
this end, let the scalars \mbox{${\scriptstyle{\Upsilon}_1}$,} $\ldots$, \mbox{${\scriptstyle{\Upsilon}_4}$} 
be so that~ \mbox{${\scriptstyle{\Upsilon}_1}^*{\scriptstyle{\Upsilon}_1}+~\ldots~+
{\scriptstyle{\Upsilon}_4}^*{\scriptstyle{\Upsilon}_4}=1$.} Then, for each quadruple
${\scriptstyle{\Upsilon}_1}$, $\ldots$, ${\scriptstyle{\Upsilon}_4}$, the combination
\begin{center}
$
\begin{smallmatrix}{{\Upsilon}_1}^*\end{smallmatrix}\mathbf{Z}_{\rm in}(s)
\begin{smallmatrix}{\Upsilon}_1\end{smallmatrix}
+\begin{smallmatrix}{{\Upsilon}_2}^*\end{smallmatrix}\mathbf{Y}_{\rm in}(s)
\begin{smallmatrix}{\Upsilon}_2\end{smallmatrix}
+\begin{smallmatrix}{{\Upsilon}_3}^*\end{smallmatrix}{\color{blue}\hat{\mathbf Y}}(s)
\begin{smallmatrix}{\Upsilon}_3\end{smallmatrix}
+\begin{smallmatrix}{{\Upsilon}_4}^*\end{smallmatrix}{\color{red}\hat{\mathbf Z}}(s)
\begin{smallmatrix}{\Upsilon}_4\end{smallmatrix},
$
\end{center}
is yet indeed another $2\times 2$-valued function, within the same family.
\bigskip

{\bf b.}~ Now, for realization arrays, we employ item (ii) of Proposition \ref{Pn:Sets_Of_Realizations}. To
this end, let \mbox{${\scriptstyle{\Upsilon}_1}$,} $\ldots$, \mbox{${\scriptstyle{\Upsilon}_4}$,} be 
$\R^{2\times 2}$ {\em diagonal}, so that~ \mbox{${\scriptstyle{\Upsilon}_1}^*{\scriptstyle{\Upsilon}_1}+~
\ldots~+{\scriptstyle{\Upsilon}_4}^*{\scriptstyle{\Upsilon}_4}=I_2$.} Then, for each quadruple
${\scriptstyle{\Upsilon}_1}$, $\ldots$, ${\scriptstyle{\Upsilon}_4}$,
\begin{center}
\mbox{$
\begin{smallmatrix}{\Upsilon}_1\end{smallmatrix}^*R_{\rm\bf Z}\begin{smallmatrix}{\Upsilon}_1\end{smallmatrix}
+\begin{smallmatrix}{\Upsilon}_2\end{smallmatrix}^*R_{\rm\bf Y}\begin{smallmatrix}{\Upsilon}_2\end{smallmatrix}
+\begin{smallmatrix}{\Upsilon}_3\end{smallmatrix}^*R_{\rm\bf\color{red}\hat Y}\begin{smallmatrix}{\Upsilon}_3
\end{smallmatrix}+\begin{smallmatrix}{\Upsilon}_4\end{smallmatrix}^*R_{\rm\bf\color{red}\bf Z}
\begin{smallmatrix}{\Upsilon}_4\end{smallmatrix}
$}
\end{center} 
is a \mbox{$(2+2)\times(2+2)$} realization (recall, not necessarily minimal) of yet another 
$\mathcal{HP}_{\color{blue}\beta}$ function, with the same 
$\begin{smallmatrix}{\color{blue}\beta}\end{smallmatrix}$.
}
$\T$
\end{Ex}


\section[Balanced-Truncation]{Balanced Truncation of a Convex-hull of Functions}
\label{Sec:Balanced_Truncation}
\setcounter{equation}{0}

In this section we show how can one exploit the above background to simultaneously
obtain a balanced-truncated-reduced-order-model of a whole convex-hull of systems.
This is done for possibly rectangular Hurwitz stable functions and subsequently in
Subsection \ref{Subsec:Balanced_Truncation_HP_beta}, specialized to 
$\mathcal{HP}_{\color{blue}\beta}$ functions.

\subsection{Classical Balanced Truncation Model-Order Reduction}

We start with some classical background. Let $F(s)$ be an $m\times m$-valued Hurwitz stable rational
function, and let $R_F$ be the \mbox{$(n+m)\times(n+m)$} corresponding balanced realization,
\[
F(s)=C(sI_n-A)^{-1}B+D\quad\quad\quad
R_F=\left({\footnotesize\begin{array}{c|c}A&B\\ \hline C&D\end{array}}\right).
\]
Since in balanced realization the Gramians may be chosen to be diagonal, it can always be
written\begin{footnote}{Recall that in Corollary \ref{Cy:Balanced_Internally_Passive} we pointed out that this
implies internal-passivity.}\end{footnote} as
\begin{equation}\label{eq:Gramian=Sigma}
\Sigma{A}^*+A\Sigma=-BB^*\quad\quad\quad\Sigma{A}+A^*\Sigma=C^*C\quad\quad\quad
\Sigma:=\left(\begin{smallmatrix}{\sigma}_1&~   & ~  \\~         &\ddots&~           \\
          &~     &{\sigma}_n\end{smallmatrix}\right),
\end{equation}
where ${\sigma}_j$ are equal to the Hankel singular values of $F(s)$, see e.g.
\cite[Proposition4.10]{DullPaga2000}, \cite[Section 3.9]{Zhou1996}. For a {\em parameter}
\mbox{$\nu\in[1, n]$}, one can order the Hankel singular values as,
\[
{\sigma}_1\geq~\cdots~\geq{\sigma}_{\nu}>{\sigma}_{\nu+1}\geq~\cdots~\geq{\sigma}_n~,
\]
and then partition the balanced Gramian $\Sigma$ as follows,
\[
\Sigma=\left(\begin{smallmatrix}{\Sigma}_{\nu}&&0\\~\\0&&\tilde{\Sigma}\end{smallmatrix}\right)
\quad\quad{\rm where}\quad\quad\quad{\Sigma}_{\nu}=\left(\begin{smallmatrix}
{\sigma}_1&~     & ~            \\
~         &\ddots&~             \\
~         &~     &{\sigma}_{\nu}
\end{smallmatrix}\right)
\quad
\quad
{\rm and}
\quad
\quad
\tilde{\Sigma}
=
\left(\begin{smallmatrix}
{\sigma}_{\nu+1}&~     &~                                \\
~               &\ddots&                                 \\
                &~     &{\sigma}_n\end{smallmatrix}\right).
\]
\smallskip

We next {\em conformably} partition $A$, $B$, $C$ so that Eq. \eqref{eq:Gramian=Sigma} takes the form
\begin{equation}\label{eq:Patitioned_Gramians}
\begin{matrix}
\overbrace{
\left(\begin{smallmatrix}A_{\nu}&&*\\~\\ *&&*\end{smallmatrix}\right)
}^A
\left(\begin{smallmatrix}{\Sigma}_{\nu}&&0\\~\\0&&\tilde{\Sigma}\end{smallmatrix}\right)
+
\left(\begin{smallmatrix}{\Sigma}_{\nu}&&0\\~\\0&&\tilde{\Sigma}\end{smallmatrix}\right)
\overbrace{
\left(\begin{smallmatrix}A_{\nu}&&*\\~\\ *&&*\end{smallmatrix}\right)^*
}^{A^*}
=
-
\overbrace{
\left(\begin{smallmatrix}B_{\nu}\\~\\ *\end{smallmatrix}\right)
}^B
\overbrace{
\left(\begin{smallmatrix}B_{\nu}\\~\\ *\end{smallmatrix}\right)^*
}^{B^*}
\\~\\
\left(\begin{smallmatrix}{\Sigma}_{\nu}&&0\\~\\0&&\tilde{\Sigma}\end{smallmatrix}\right)
\underbrace{
\left(\begin{smallmatrix}A_{\nu}&&*\\~\\ *&&*\end{smallmatrix}\right)
}_A
+
\underbrace{
\left(\begin{smallmatrix}A_{\nu}&&*\\~\\ *&&*\end{smallmatrix}\right)^*
}_{A^*}
\left(\begin{smallmatrix}{\Sigma}_{\nu}&&0\\~\\0&&\tilde{\Sigma}\end{smallmatrix}\right)
=
-
\underbrace{
\left(\begin{smallmatrix}C_{\nu}&&*\end{smallmatrix}\right)^*
}_{C^*}
\underbrace{
\left(\begin{smallmatrix}C_{\nu}&&*\end{smallmatrix}\right).
}_C
\end{matrix}
\end{equation}
Then, the original $F(s)$ of degree $n$, can be {\em approximated} by $\hat{F}(s)$ of degree $\nu$.
Using Eq. \eqref{eq:Patitioned_Gramians}, the \mbox{$(\nu+p)\times(\nu+m)$} balanced
realization of $\hat{F}(s)$
is given by,
\[
R_{\hat F}=\left({\footnotesize\begin{array}{c|c}A_{\nu}&B_{\nu}\\ \hline C_{\nu}&D\end{array}}\right).
\]
For details see e.g. \cite{Moore1981} (the original work) and the books, \cite[Chapter 7]{Antou2005},
\cite[Subsection 4.6.2] {DullPaga2000} and \cite[Chapter 7]{Zhou1996}.

\subsection{Balanced Truncation and $m, n$-matrix-convexity}
\label{subsec:Balanced Truncation and $m, n$-matrix-convexity}

We first cast the above, in the framework of Definition \ref{Dn:n,mMatrixConvex}. To this end, for some
$\nu\in[1,~n]$, consider the following pair of block-diagonal isometries,
\begin{center}
$
\begin{smallmatrix}\Upsilon_{\rm left}\end{smallmatrix}:=
\left(\begin{smallmatrix}I_{\nu}&0\\
0_{(n-\nu)\times\nu}&0\\0&I_p\end{smallmatrix}\right)
\quad\quad
\begin{smallmatrix}\Upsilon_{\rm right}\end{smallmatrix}:=
\left(\begin{smallmatrix}I_{\nu}&0\\
0_{(n-\nu)\times\nu}&0\\0&I_m\end{smallmatrix}\right),
$
\end{center}
i.e.
\begin{center}
$
\begin{smallmatrix}\Upsilon_{\rm left}\end{smallmatrix}^*
\begin{smallmatrix}\Upsilon_{\rm left}\end{smallmatrix}=
\left(\begin{smallmatrix}I_{\nu}&&0\\~\\0&&I_p\end{smallmatrix}\right)
\quad\quad
\begin{smallmatrix}\Upsilon_{\rm right}\end{smallmatrix}^*
\begin{smallmatrix}\Upsilon_{\rm right}\end{smallmatrix}=
\left(\begin{smallmatrix}I_{\nu}&&0\\~\\0&&I_m\end{smallmatrix}\right).
$
\end{center}
Employing Eq. \eqref{eq:Patitioned_Gramians}, we have the following.

\begin{La}\label{La:BalancedTruncation}
Let $R_F=\left({\footnotesize\begin{array} {c|c}A&B\\ \hline C&D\end{array}}\right)$
be a \mbox{$(n+p)\times(n+m)$} balanced realization of a $p\times m$-valued Hurwitz stable
rational function $F(s)$ partitioned as in Eq. \eqref{eq:Patitioned_Gramians}.
\smallskip

Then, $R_{\hat{F}}$, a \mbox{$(\nu+p)\times(\nu+m)$} balanced truncation reduced order model
approximation of $F(s)$, of degree $\nu$, is given by
\[
\underbrace{\left({\footnotesize\begin{array}{c|c}
A_{\nu}&B_{\nu}\\ \hline C_{\nu}&D\end{array}}\right)
}_{
R_{\hat F}}
=
\begin{smallmatrix}\Upsilon_{\rm left}\end{smallmatrix}^*
\underbrace{
\left({\footnotesize\begin{array} {c|c}A&B\\ \hline C&D\end{array}}\right)
}_{R_F}
\begin{smallmatrix}\Upsilon_{\rm right}\end{smallmatrix}.
\]
\end{La}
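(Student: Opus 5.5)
The claim is essentially a block-matrix computation: compress $R_F$ from the left by $\Upsilon_{\rm left}$ and from the right by $\Upsilon_{\rm right}$, compare the result with the array $R_{\hat F}$ defined just before the lemma, and then quote the classical balanced-truncation facts for the remaining assertions. I would proceed as follows.

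First I will fix the block partition of $R_F$ conformable with Eq.~\eqref{eq:Patitioned_Gramians}. I split the $n$ state coordinates as $\nu+(n-\nu)$, so that
\[
R_F=\left(\begin{smallmatrix}A_{\nu}&A_{12}&B_{\nu}\\ A_{21}&A_{22}&B_2\\ C_{\nu}&C_2&D\end{smallmatrix}\right).
\]
Here $A_{\nu}$ is $\nu\times\nu$, $B_{\nu}$ is $\nu\times m$ and $C_{\nu}$ is $p\times\nu$; these are exactly the blocks appearing in the upper-left positions of Eq.~\eqref{eq:Patitioned_Gramians}.

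Next I will compute the two one-sided products.
\begin{itemize}
\item Right multiplication by $\Upsilon_{\rm right}$ selects the columns with indices $1,\ldots,\nu$ and $n+1,\ldots,n+m$. It therefore deletes the middle block-column $(A_{12};A_{22};C_2)$ and leaves an $(n+p)\times(\nu+m)$ array.
\item Left multiplication by $\Upsilon_{\rm left}^*$ selects the rows with indices $1,\ldots,\nu$ and $n+1,\ldots,n+p$. It deletes the middle block-row $(A_{21},A_{22},B_2)$.
\end{itemize}
What remains is $\left(\begin{smallmatrix}A_{\nu}&B_{\nu}\\ C_{\nu}&D\end{smallmatrix}\right)$, which is precisely $R_{\hat F}$.

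It remains to check that this compressed array is indeed the balanced truncation of degree $\nu$, as the lemma asserts. Reading off the upper-left $\nu\times\nu$ blocks of Eq.~\eqref{eq:Patitioned_Gramians} gives
\[
A_{\nu}\Sigma_{\nu}+\Sigma_{\nu}A_{\nu}^*=-B_{\nu}B_{\nu}^*,\qquad
\Sigma_{\nu}A_{\nu}+A_{\nu}^*\Sigma_{\nu}=-C_{\nu}^*C_{\nu}.
\]
Since $\Sigma_{\nu}$ is diagonal, these two equations say that $(A_{\nu},B_{\nu},C_{\nu})$ is balanced with Gramian $\Sigma_{\nu}$.

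The separation assumption $\sigma_{\nu}>\sigma_{\nu+1}$ is what yields the remaining properties. By the classical results already cited (Moore; \cite[Chapter 7]{Zhou1996}), it guarantees that $A_{\nu}$ is Hurwitz stable and that the truncated realization is minimal, so $\hat F$ has McMillan degree exactly $\nu$.

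The only place needing care is this last point, namely stability and minimality of the truncated system. I would not reprove it; instead I would quote the standard balanced-truncation theorem. All other steps are bookkeeping of row and column selection.
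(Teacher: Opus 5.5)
Your proposal is correct and follows the paper's route: the paper gives no argument beyond ``employing Eq.~\eqref{eq:Patitioned_Gramians},'' and the identity is exactly the row/column selection you carry out, with $\left(\begin{smallmatrix}A_\nu&B_\nu\\ C_\nu&D\end{smallmatrix}\right)$ taken by definition as the balanced truncation. Your appeal to the classical balanced-truncation theorem (Moore; Pernebo--Silverman) for stability and minimality is likewise left by the paper to the cited literature. One ordering fix: reading the upper-left blocks as ``balanced with Gramian $\Sigma_\nu$'' requires $A_\nu$ to be Hurwitz first, so that step should come after the citation.
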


\begin{Rk}
{\rm
For $\begin{smallmatrix}{\color{blue}\beta}\end{smallmatrix}=0$, i.e. $\mathcal{P}$ functions, item
(ii) of Lemma \ref{La:BalancedTruncation} is classical, see e.g. \cite[Theorem 1]{ChenWen1995},
\cite{DesPal1984} and \cite[Theorem 6.1]{Ober1991}.
}
$\T$
\end{Rk}

With the above background, we can now show that technically,
the operations of taking (i) convex combination and (ii) balanced truncation, commute.

\begin{Pn}\label{Pn:Balanced_Trunction_Convex_Hull}
For some natural $k$, $n$, $m$, $p$, let $F_1(s)$, $\ldots$, $F_k(s)$ be $p\times m$-valued, Hurwitz stable
rational functions, all sharing the same set of $n$ Hankel singular values. Let also, $R_{F_1}$, $\ldots$,
$R_{F_k}$ be a collection of corresponding \mbox{$(n+p)\times(n+m)$} balanced realizations, which for
a prescribed $\nu$, $\nu\in[1,~n]$, are all partitioned as in Eq. \eqref{eq:Patitioned_Gramians}.
\smallskip

Following Lemma \ref{La:BalancedTruncation} let now $R_{\hat{F}_1}$, $\ldots$, $R_{\hat{F}_k}$ be
corresponding, \mbox{$(\nu+m)\times(\nu+m)$} balanced truncation reduced order approximations.
\smallskip

{\rm (i)}~
Then
simultaneously for all $ j=1,~\ldots~k$ 
\[
R_{\hat{F}_j}=\left(\begin{smallmatrix}I_{\nu}&0\\0_{(n-\nu)\times\nu}&0\\0&I_p
\end{smallmatrix}\right)R_{F_j}\left(\begin{smallmatrix}I_{\nu}&0\\
0_{(n-\nu)\times\nu}&0\\0&I_m\end{smallmatrix}\right),
\]
are realization of $\hat{F}_j$, $\ldots$, $\hat{F}_n$, $p\times m$-valued, balanced-truncation approximation
of degree $\nu$.
\smallskip

{\rm (ii)}~
Let $\begin{smallmatrix}{\theta}_j\end{smallmatrix}\geq 0$ be arbitrary so that 
$\sum\limits_{j=1}^k\begin{smallmatrix}{{\theta}_j}\end{smallmatrix}=1$.
Denote by $F_o(s)$ the rational function whose realization is given by~
\begin{equation}\label{eq:F0}
R_{F_0}:=\sum\limits_{j=1}^k\begin{smallmatrix}{{\theta}_j}\end{smallmatrix}R_{F_j}~.
\end{equation}
Let also $R_{\hat{F}_0}$, be a realization of $\hat{F}_0(s)$, the balanced truncation reduced order
approximation of $F_o(s)$ of~ degree $\nu$. Then, $R_{\hat{F}_0}$
is equal to~ $\sum\limits_{j=1}^k\begin{smallmatrix}{{\theta}_j}\end{smallmatrix}R_{\hat{F}_j}~$,
with $R_{\hat{F}_j}$ from item {\rm (i)}.
\end{Pn}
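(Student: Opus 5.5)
Item (i) will follow directly from Lemma \ref{La:BalancedTruncation}, applied to each $F_j$ separately. The only point to stress is that the isometries $\Upsilon_{\rm left}$, $\Upsilon_{\rm right}$ depend only on the dimensions $n,\nu,p,m$, and not on the data $(A_j,B_j,C_j,D_j)$. Because all $R_{F_j}$ are balanced, they share the same Hankel singular values and are partitioned conformably as in Eq. \eqref{eq:Patitioned_Gramians} with the same $\nu$. Hence the same pair of isometries truncates every $R_{F_j}$ at once, and $\Upsilon_{\rm left}^*R_{F_j}\Upsilon_{\rm right}=\left({\footnotesize\begin{array}{c|c}A_{j,\nu}&B_{j,\nu}\\ \hline C_{j,\nu}&D_j\end{array}}\right)$ is the balanced truncation of $F_j$.

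For item (ii), the first step is to check that $R_{F_0}=\sum_j\theta_jR_{F_j}$ is itself a balanced realization with the same diagonal Gramian $\Sigma$. This uses the Gramian identities of Eq. \eqref{eq:Gramian=Sigma}. The identities $\Sigma A_j+A_j^*\Sigma=-C_j^*C_j$ and $A_j\Sigma+\Sigma A_j^*=-B_jB_j^*$ are linear in $A_j$ but quadratic in $B_j$ and $C_j$. Summing with weights $\theta_j$ gives
\[
\Sigma A_0+A_0^*\Sigma=-\sum_j\theta_jC_j^*C_j\preccurlyeq -C_0^*C_0 ,
\]
where $A_0=\sum_j\theta_jA_j$ and $C_0=\sum_j\theta_jC_j$; the inequality is convexity of $C\mapsto C^*C$. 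The same argument applies to $B_0$. So in general $\Sigma$ is only a common \emph{generalized} Gramian: it satisfies Lyapunov inequalities, not equalities. This is the main obstacle. I would handle it by reading ``balanced truncation of $F_0$'' in the sense of truncating a realization that is balanced with respect to a common diagonal solution $\Sigma$ of the two Lyapunov inequalities, i.e.\ generalized balanced truncation. In that framework the truncation is again obtained by compressing with the fixed isometries, and $A_0$ remains Hurwitz since $\Sigma\succ0$. If one insists on exact Gramians, the statement must be read with that as an implicit hypothesis.

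With $R_{F_0}$ balanced (in the above sense) with Gramian $\Sigma$ partitioned as in Eq. \eqref{eq:Patitioned_Gramians}, Lemma \ref{La:BalancedTruncation} gives $R_{\hat F_0}=\Upsilon_{\rm left}^*R_{F_0}\Upsilon_{\rm right}$. The conclusion is then pure linearity of compression:
\[
\Upsilon_{\rm left}^*\Big(\sum_j\theta_jR_{F_j}\Big)\Upsilon_{\rm right}=\sum_j\theta_j\,\Upsilon_{\rm left}^*R_{F_j}\Upsilon_{\rm right}=\sum_j\theta_jR_{\hat F_j},
\]
using item (i). Thus convex combination and truncation commute: both are linear operations on realization arrays, and the truncation map is the same for every member of the family because of the shared Hankel singular values.
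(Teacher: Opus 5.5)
Your argument is the paper's. Item (i) is Lemma \ref{La:BalancedTruncation} with isometries $\Upsilon_{\rm left},\Upsilon_{\rm right}$ that depend only on $n,\nu,p,m$, and item (ii) is linearity of $R\mapsto\Upsilon_{\rm left}^*R\,\Upsilon_{\rm right}$. The paper gives no argument beyond this. It tacitly treats $R_{F_0}$ as balanced with the same $\Sigma$, which is exactly the step you flag, and your caveat is justified.

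Since $\sum_j\theta_jC_j^*C_j-C_0^*C_0=\sum_j\theta_j(C_j-C_0)^*(C_j-C_0)$, $R_{F_0}$ satisfies the Gramian equations with $\Sigma$ only if all $B_j$, and all $C_j$, with $\theta_j>0$ coincide. Otherwise the literal item (ii) can fail. The paper's own family \eqref{eq:Example_Realization} shows this: take $k=2$, $\theta_1=\theta_2=\frac12$, $\delta_1=\delta_2=1$, $\alpha_1=-\alpha_2=1$. Then $A_0=\mathrm{diag}(-\frac15,-\frac12)$ and $B_0=C_0^*=(0,~1)^T$. So $F_0(s)=\frac{1}{s+\frac12}+D_0$ has McMillan degree one, its exact Gramians are $\mathrm{diag}(0,1)$, and its degree-one balanced truncation is $F_0$ itself. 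Yet $\sum_j\theta_jR_{\hat f_j}=\left(\begin{smallmatrix}-\frac15&0\\0&D_0\end{smallmatrix}\right)$ realizes the constant $D_0$. This also contradicts the closing example's claim that $\hat f_0$ has degree one. So (ii) holds only in one of two readings. The first is your generalized one: compression with respect to the common generalized Gramian $\Sigma$, which does satisfy $\Sigma A_0+A_0^*\Sigma\preccurlyeq-C_0^*C_0$ and $A_0\Sigma+\Sigma A_0^*\preccurlyeq-B_0B_0^*$. The second adds the hypothesis that $R_{F_0}$ is balanced with Gramian $\Sigma$. In either reading your proof is complete.

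One step of yours is wrong as written. $\Sigma\succ0$ together with the non-strict inequality $\Sigma A_0+A_0^*\Sigma\preccurlyeq0$ gives only ${\rm spec}(A_0)\subset\overline{\C}_L$, not Hurwitz stability. For example, $A_{1,2}=\left(\begin{smallmatrix}-\frac12&\pm1\\ \mp1&0\end{smallmatrix}\right)$ with $B=C^*=(1,~0)^T$ are Hurwitz and balanced with $\Sigma=I_2$, but their average $\mathrm{diag}(-\frac12,0)$ is singular. Taking a direct sum with the first-order system $\left(\begin{smallmatrix}-\frac14&1\\1&0\end{smallmatrix}\right)$, whose Gramian is $2$, puts this example inside the hypotheses with $\Sigma=\mathrm{diag}(2,1,1)$ and $\nu=1$. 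Stability of $A_0$ (or of the truncated $A_{0,\nu}$) needs an extra assumption, such as a strict Lyapunov inequality. The identity in (ii) is purely algebraic and does not use stability, so simply drop that claim.
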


\begin{Rk}
{\rm
{\bf a.}~ 
Proposition \ref{Pn:Balanced_Trunction_Convex_Hull} may be viewed as a scheme for balanced truncation
model order reduction of an uncertain system described by a convex hull of a polytope of realizations,
see Eq. \eqref{eq:F0}. As far as we know, it is original. 
\smallskip

{\bf b.}~ 
Proposition \ref{Pn:Balanced_Trunction_Convex_Hull} is in the spirit of \cite[Corollary 5.1]{CohenLew1997b}.
There, the treatment is more involved, as the simplifying framework of Lemma \ref{La:BalancedTruncation}
was not available.
}
$\T$
\end{Rk}

\subsection[Balanced-Truncation $\mathcal{HP}_{\color{blue}\beta}$ Functions]
{Balanced Truncation of
a Convex-hull of $\mathcal{HP}_{\color{blue}\beta}$ Functions}
\label{Subsec:Balanced_Truncation_HP_beta}

We here restrict the above discussion from Hurwitz stable, possibly rectangular-valued functions,
to $\mathcal{HP}_{\color{blue}\beta}$ functions. to obtain the following.

\begin{Cy}\label{Cy:Balanced_Truncation_HP_beta}
In the framework of Lemma \ref{La:BalancedTruncation} 
let $F(s)$ and $\hat{F}(s)$ be $m\times m$-valued
rational functions. Let $F(s)$ be of McMillan degree $n$ and let $\hat{F}(s)$ be the corresponding
balanced truncation reduced order approximation, of McMillan degree $\nu$.
\smallskip

If the original $F(s)$ belongs to \mbox{${{\mathcal{HP}}_{\color{blue}\beta}}$}, for some
\mbox{$\begin{smallmatrix}{\color{blue}\beta}\end{smallmatrix}\in[0,~1)$,}
then $\hat{F}(s)$ belongs to the same \mbox{${{\mathcal{HP}}_{\color{blue}\beta}}$.}
\end{Cy}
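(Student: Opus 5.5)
The plan is to combine three earlier facts: Corollary \ref{Cy:Balanced_Internally_Passive}, Lemma \ref{La:BalancedTruncation} and Lemma \ref{La:Truncation}.

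\emph{Step 1: the balanced realization is internally passive.} Since $F\in\mathcal{HP}_{\color{blue}\beta}$ has no pole in $\overline{\C}_R$ (indeed it is $\mathcal{SP}$-like by Proposition \ref{Pn:SP_and_HP}), it is Hurwitz stable, so a balanced realization $R_F$ exists. We take it minimal, of order $n$. By item (i) of Theorem \ref{Tm:Kyp_Hyper_Pos_W}, minimality gives some $H\in\mathbf{P}_n$ satisfying Eq. \eqref{eq:M_HP_beta}. Corollary \ref{Cy:Balanced_Internally_Passive} then states that $R_F$ is already internally passive, i.e. $R_F=\hat R_F$ and Eq. \eqref{eq:Internally_Passive_Kyp_HP} holds with $H=I_n$.

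\emph{Step 2: truncation is a compression by a block-diagonal isometry.} By Lemma \ref{La:BalancedTruncation} with $p=m$, we have $\Upsilon_{\rm left}=\Upsilon_{\rm right}=:\Upsilon$, and
\[
R_{\hat F}={\Upsilon}^*R_F\Upsilon,\qquad
\Upsilon=\left(\begin{smallmatrix}\upsilon_n&0\\0&I_m\end{smallmatrix}\right),\qquad
\upsilon_n=\left(\begin{smallmatrix}I_\nu\\0\end{smallmatrix}\right).
\]
This $\Upsilon$ is exactly of the block-diagonal isometric form of Eq. \eqref{eq:Upsilon}, with $\mu=m$ and $\upsilon_m=I_m$.

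\emph{Step 3: compress the KYP inequality.} Lemma \ref{La:Truncation} now yields
\[
\left(\begin{smallmatrix}R_{\hat F}\\ I_{\nu+m}\end{smallmatrix}\right)^*M_{\nu}\left(\begin{smallmatrix}R_{\hat F}\\ I_{\nu+m}\end{smallmatrix}\right)={\Upsilon}^*Q\Upsilon\succcurlyeq 0 ,
\]
where $M_\nu$ is the matrix of Eq. \eqref{eq:Internally_Passive_Kyp_HP} with $n$ replaced by $\nu$. The underlying computation is short: because $\Upsilon$ is block diagonal and isometric, ${\Upsilon}^*\,{\rm diag}(-I_n,I_m)\,\Upsilon={\rm diag}(-I_\nu,I_m)$. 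Moreover, the right-hand side $\beta\Gamma^*\Gamma$ is compressed into $\beta\hat\Gamma^*\hat\Gamma$ with $\hat\Gamma=\left(\begin{smallmatrix}C_\nu&D\\0&I_m\end{smallmatrix}\right)$; this uses $\Gamma\Upsilon=\hat\Gamma$, which holds because $\upsilon_m=I_m$. Hence $R_{\hat F}$ satisfies Eq. \eqref{eq:M_HP_beta} with $H=I_\nu$ and the same ${\color{blue}\beta}$. The sufficiency direction of item (i) of Theorem \ref{Tm:Kyp_Hyper_Pos_W}, which requires no minimality, then gives $\hat F\in\mathcal{HP}_{\color{blue}\beta}$.

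The main obstacle is Step 1: we must check that the balanced realization genuinely satisfies the $\mathcal{HP}_{\color{blue}\beta}$ KYP inequality with $H=I_n$, rather than merely being internally passive after some further coordinate change. If Corollary \ref{Cy:Balanced_Internally_Passive} is read loosely, I would fall back on the classical argument that balanced truncation preserves positive-realness (\cite{ChenWen1995}, \cite{Ober1991}). Alternatively, I would note that, in the balanced coordinates, the Gramians are diagonal, so the KYP solution set contains a diagonal $H$. A diagonal $H$ is preserved by the compression $\upsilon_n^*H\upsilon_n$, and the argument of Step 3 goes through verbatim with $H_\nu=\upsilon_n^*H\upsilon_n\in\mathbf{P}_\nu$ in place of $I_\nu$.
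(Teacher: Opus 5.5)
Your Steps~2--3 are correct, and your overall route is the one the paper has in mind: Corollary~\ref{Cy:Balanced_Internally_Passive}, then Lemma~\ref{La:BalancedTruncation}, then the compression of Lemma~\ref{La:Truncation}. Steps~2--3 in fact isolate what is needed: some solution $H$ of Eq.~\eqref{eq:M_HP_beta} that is block diagonal conformally with the truncation, $H={\rm diag}(H_\nu,\tilde H)$, so that $\upsilon_n^*H=H_\nu\upsilon_n^*$.

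The gap is Step~1, and it cannot be closed. A realization that is balanced in the paper's (Lyapunov-Gramian) sense need not satisfy Eq.~\eqref{eq:M_HP_beta} with $H=I_n$, so Corollary~\ref{Cy:Balanced_Internally_Passive} is false as stated. The function of Example~\ref{Ea:Unitary_Similarity} already shows this:
\begin{itemize}
\item $R_f=\left(\begin{smallmatrix}-1&1\\1&1\end{smallmatrix}\right)$ is balanced (both Gramians equal $\frac12$), and $f(s)=\frac{s+2}{s+1}\in\mathcal{HP}_{4/5}$.
\item With $H=1$, Eq.~\eqref{eq:M_HP_beta} reads $2I_2\succcurlyeq\beta\left(\begin{smallmatrix}1&1\\1&2\end{smallmatrix}\right)$. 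This is equivalent to $\beta^2-6\beta+4\geq0$, i.e.\ $\beta\leq3-\sqrt5<\frac45$.
\item At $\beta=\frac45$ the only admissible $H$ is $\frac35$.
\end{itemize}
Your fallbacks do not repair this. The results of \cite{ChenWen1995,Ober1991} concern positive-real balancing (balancing extremal solutions of the KYP/Riccati inequality), not Lyapunov balancing, and they only cover $\beta=0$. Diagonality of the Lyapunov Gramians says nothing about the solution set of the KYP inequality, so nothing forces that set to contain a diagonal $H$.

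In fact the statement itself fails for Lyapunov-balanced truncation, already at $\beta=0$ (which is why positive-real balancing was introduced). Let $f(s)=\frac{19}{2}+\frac{1}{s+1}-\frac{1}{s+1/10}$.
\begin{itemize}
\item One checks $\mathrm{Re}\,f(i\omega)\geq f(0)=\frac12$. So $f\in\mathcal{P}$, and $f\in\mathcal{HP}_\beta$ for all small $\beta>0$.
\item Its Hankel singular values are $\sigma_1\approx4.845>\sigma_2\approx0.345$.
\item In a balanced realization of a scalar function, the diagonal entries of the two Lyapunov equations give $a_{ii}=-b_i^2/(2\sigma_i)$ and $c_i=s_ib_i$ with $s_i=\pm1$.
\item In these coordinates the cross Gramian $W$ (solving $AW+WA=-BC$) equals ${\rm diag}(s_i\sigma_i)$. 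Since $2\,{\rm tr}\,W=-CA^{-1}B=f(0)-D=-9$, this forces $s_1=-1$.
\item Hence the degree-one balanced truncation is $\hat f(s)=\frac{19}{2}-\frac{b_1^2}{s+b_1^2/(2\sigma_1)}$, with $\hat f(0)=\frac{19}{2}-2\sigma_1\approx-0.19<0$. So $\hat f\notin\mathcal{P}$.
\end{itemize}
In particular, no solution $H$ of Eq.~\eqref{eq:Original_KYP} for this $f$ is diagonal in balanced coordinates, which also rules out your fallback.

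What is missing is a notion of balancing adapted to Eq.~\eqref{eq:M_HP_beta} itself, in the spirit of \cite{DesPal1984,ChenWen1995}, one that guarantees a KYP solution block diagonal conformally with the truncation. Given such an $H$, your Steps~2--3 go through verbatim with $H_\nu$ in place of $I_\nu$.
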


\begin{Cy}\label{Cy:Balanced_Trunction_HP_beta}
In the framework of Proposition \ref{Pn:Balanced_Trunction_Convex_Hull}, let $F_1(s)$, $\ldots$, $F_k(s)$
and $\hat{F}_1(s)$, $\ldots$, $\hat{F}_k(s)$ be $m\times m$-valued rational functions, where $\hat{F}_1(s)$,
$\ldots$, $\hat{F}_k(s)$ are the (degree $\nu$) balanced truncated approximation of the original $F_1(s)$,
$\ldots$, $F_k(s)$ (of degree $n$), respectively.
\smallskip

Let also $F_0(s)$ and $\hat{F}_0(s)$ be a combination (in the sense of Eq. \eqref{eq:F0}), of $F_1(s)$,
$\ldots$, $F_k(s)$ and of $\hat{F}_1(s)$, $\ldots$, $\hat{F}_k(s)$, respectively.
\smallskip

If 
$F_1\in\mathcal{HP}_{{\color{blue}\beta}_1}$,
$\ldots$,
$F_k\in\mathcal{HP}_{{\color{blue}\beta}_k}$
for some 
${\scriptstyle{\color{blue}\beta}_1}$,
$\ldots$,
${\scriptstyle{\color{blue}\beta}_k}\in(0,~1)$, then
$\hat{F}_0\in\mathcal{HP}_{{\color{blue}\beta}_0}$,
where
\mbox{${\scriptstyle{\color{blue}\beta}_0}\geq\min(
{\scriptstyle{\color{blue}\beta}_1}~ \ldots~{\scriptstyle{\color{blue}\beta}_k})$.}
\end{Cy}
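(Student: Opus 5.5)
The plan is to chain three earlier results: Proposition \ref{Pn:Balanced_Trunction_Convex_Hull} (convex combination and balanced truncation commute), Corollary \ref{Cy:Balanced_Internally_Passive} (balanced implies internally passive), and Proposition \ref{Pn:Matrix-Convex_Combination_of_Realizations} (matrix-convex combinations of internally passive realizations). The object $\hat{F}_0$ should be read, via item (ii) of Proposition \ref{Pn:Balanced_Trunction_Convex_Hull}, as the function realized by
\[
R_{\hat{F}_0}=\sum_{j=1}^k\theta_jR_{\hat{F}_j}=\sum_{j=1}^k\theta_j\,\Upsilon_{\rm left}^*R_{F_j}\Upsilon_{\rm right}.
\]
Here $p=m$, so $\Upsilon_{\rm left}=\Upsilon_{\rm right}=:\Upsilon$, the block-diagonal isometry $\mathrm{diag}(\upsilon_n,I_m)$ with $\upsilon_n=\left(\begin{smallmatrix}I_\nu\\0\end{smallmatrix}\right)$.

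\textbf{Step 1.} Each $R_{F_j}$ is a balanced realization of $F_j\in\mathcal{HP}_{\beta_j}$. By Corollary \ref{Cy:Balanced_Internally_Passive}, $R_{F_j}$ is internally passive, i.e. it satisfies Eq. \eqref{eq:Internally_Passive_Kyp_HP} with $\beta=\beta_j$ and $H=I_n$.

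\textbf{Step 2.} Rewrite the combination as a single matrix-convex combination in the sense of Eq. \eqref{eq:Def_Rg_BR_Again}. Take $\upsilon_{n_j}:=\sqrt{\theta_j}\,\upsilon_n$ and $\upsilon_{m_j}:=\sqrt{\theta_j}\,I_m$. Then $\sum_j\upsilon_{n_j}^*\upsilon_{n_j}=\sum_j\theta_j I_\nu=I_\nu$, and likewise $\sum_j\upsilon_{m_j}^*\upsilon_{m_j}=I_m$. With these choices, $R_{\hat{F}_0}$ is exactly the array in Eq. \eqref{eq:Def_Rg_BR_Again} with $n_j=n$ and $\mu=m$.

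\textbf{Step 3.} Proposition \ref{Pn:Matrix-Convex_Combination_of_Realizations} now gives that $R_{\hat{F}_0}$ is an internally passive realization of a function in $\mathcal{HP}_{\beta_0}$ with $\beta_0\ge\min_j\beta_j$.

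The underlying mechanism is worth recording, since it is where the $\min$ enters. By Proposition \ref{Pn:HP_Delta_Order}, write $\underline\beta:=\min_j\beta_j$; then each $R_{F_j}$ satisfies Eq. \eqref{eq:Internally_Passive_Kyp_HP} with $\underline\beta$. In Eq. \eqref{eq:Internally_Passive_Kyp_HP} the weight matrix is block diagonal, with blocks scalar on the $n$ and $m$ parts, so it commutes with $\mathrm{diag}(\Theta_j,\Theta_j)$, where $\Theta_j:=\mathrm{diag}(\upsilon_{n_j},\upsilon_{m_j})$. The cross terms then vanish exactly as in item (vii) of Lemma \ref{La:Structural_Properties}, and the left side reduces to $\sum_j\Theta_j^*Q_j\Theta_j\succcurlyeq0$.

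The main obstacle I expect is in Step 3. The quadratic form is not linear in $R$, so when I expand the form evaluated at $\sum_j\Theta_j^*R_{F_j}\Theta_j$, the $-\underline\beta$ blocks produce cross terms between different indices $j$ and $l$. These are not killed by any orthogonality, since $\Theta_j^*\Theta_l\neq0$.

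I would handle them as in the convexity proof, item (ii) of Lemma \ref{La:Structural_Properties}. There, Jensen-type convexity of $E\mapsto E^*XE$ for $-X\succcurlyeq0$ gives
\[
\Bigl(\sum_j\Theta_j^*E_j\Theta_j\Bigr)^*(-X)\Bigl(\sum_j\Theta_j^*E_j\Theta_j\Bigr)\preccurlyeq\sum_j\Theta_j^*E_j^*(-X)E_j\Theta_j,
\]
and this holds because $\sum_j\Theta_j^*\Theta_j=I$ and $X$ is scalar on each block. Since $-X$ enters the form with the sign that favours the combination, the inequality goes the right way, and the conclusion follows.
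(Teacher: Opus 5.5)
Your chain is exactly how the paper intends this corollary to follow (it gives no separate proof), but it breaks at Step~1, and the break cannot be repaired. Corollary~\ref{Cy:Balanced_Internally_Passive} fails for Gramian-balanced realizations in the sense of Subsection~\ref{Subsec:Balanced}, and with it the statement itself fails. Take $m=k=1$, $n=2$, $\nu=1$ and
\[
R_{F_1}=\left(\begin{array}{rr|r}-1&2&2\\-2&-\frac{1}{2}&1\\ \hline -2&1&\frac{7}{2}\end{array}\right),\qquad
F_1(s)=\frac{7}{2}-\frac{3(s+3)}{s^2+\frac{3}{2}s+\frac{9}{2}}.
\]
With $\Sigma={\rm diag}(2,1)$ one checks $A\Sigma+\Sigma A^*=-BB^*$ and $\Sigma A+A^*\Sigma=-C^*C$. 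So $R_{F_1}$ is balanced, with Hankel singular values $2>1$. Moreover
\[
{\rm Re}\,F_1(i\omega)=\frac{28\omega^4-153\omega^2+243}{2(4\omega^4-27\omega^2+81)}>0,
\]
since both discriminants are negative. Hence ${\rm Re}\,F_1\geq c>0$ and $|F_1|\leq M$ on $\overline{\C}_R$, i.e.\ $F_1\in\mathcal{HP}_{\beta_1}$ with $\beta_1=2c/(1+M^2)\in(0,1)$. Yet the balanced truncation is $\hat{F}_0=\hat{F}_1$ with $\hat{F}_1(s)=\frac{7}{2}-\frac{4}{s+1}$. Since $\hat{F}_1(0)=-\frac{1}{2}$, we get $\hat{F}_0\notin\mathcal{P}$. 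Consistently, with $H=I_2$ the left-hand side of Eq.~\eqref{eq:M_HP_beta} for $R_{F_1}$ contains the indefinite principal block $\left(\begin{smallmatrix}2&-4\\-4&7\end{smallmatrix}\right)$, so $R_{F_1}$ is not internally passive. This is the familiar fact that Lyapunov balanced truncation preserves stability but not passivity (which is why positive-real balancing exists). Step~1 is therefore an additional hypothesis, not something you can prove.

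Everything after Step~1 is sound, and your handling of the cross terms is better than the paper's. The identity $\Theta_j^*M\Theta_l=\delta_{j,l}\Theta_j^*M\Theta_j$ used for item (vii) of Lemma~\ref{La:Structural_Properties} is false whenever $\Upsilon_j^*\Upsilon_l\neq0$. So drop your sentence that the form ``reduces to'' $\sum_j\Theta_j^*Q_j\Theta_j$ and keep only the inequality, which goes as follows. Write $-X=K^*K$ with $K$ scalar on the $n$- and $m$-blocks. Then $K\Theta_j^*=\Theta_j^*K$ (with $K$ of the appropriate size on each side), and the same intertwining holds for $V$ and $Y$. The stacked matrix $\mathbf{\Theta}={\rm col}(\Theta_1,\dots,\Theta_k)$ is an isometry, so $\mathbf{\Theta}\mathbf{\Theta}^*\preccurlyeq I$. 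This gives that the quadratic form of Eq.~\eqref{eq:Internally_Passive_Kyp_HP} (with $\min_j\beta_j$), evaluated at $\sum_j\Theta_j^*R_{F_j}\Theta_j$, is $\succcurlyeq\sum_j\Theta_j^*Q_j\Theta_j\succcurlyeq0$. Combined with the sufficiency half of Theorem~\ref{Tm:Kyp_Hyper_Pos_W}(i), which needs no minimality, this proves a corrected corollary. The correction is to replace ``balanced'' by the hypothesis that each $R_{F_j}$ satisfies Eq.~\eqref{eq:M_HP_beta} with $\beta_j$ and $H=I_n$, or with a common $H$ that is block diagonal with respect to the splitting $n=\nu+(n-\nu)$.
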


We here offer a simple illustration of an application of Corollary 
\ref{Cy:Balanced_Trunction_HP_beta}.

\begin{Ex}
{\rm 
Following Eq. \eqref{eq:F0}, consider $F_0(s)$ as an {\em unknown} element in the
convex hull of given realization arrays $R_{F_1}$, $\ldots$, $R_{F_k}$, where
\begin{equation}\label{eq:Example_Realization}
R_{F_j}=\left({\footnotesize\begin{array}{cc|c}
-\begin{smallmatrix}\frac{1}{5}{{\alpha}_j}^2\end{smallmatrix}&
-\begin{smallmatrix}\frac{2}{11}{\alpha}_j{\delta}_j\end{smallmatrix}
&
\begin{smallmatrix}2{\alpha}_j\end{smallmatrix}
\\
-\begin{smallmatrix}\frac{2}{11}{\alpha}_j{\delta}_j\end{smallmatrix}
&-\begin{smallmatrix}\frac{1}{2}{{\delta}_j}^2\end{smallmatrix}
&\begin{smallmatrix}{\delta}_j\end{smallmatrix}\\
\hline
\begin{smallmatrix}2{\alpha}_j\end{smallmatrix}
&
\begin{smallmatrix}{\delta}_j\end{smallmatrix}&\begin{smallmatrix}d_j\end{smallmatrix}
\end{array}}\right)
\quad\quad
\begin{matrix}
 j=1,~\ldots~,~k\\~\\
0\not={\alpha}_j, {\delta}_j\in\R,~d_j>0~~{\rm parameters}.
\end{matrix}
\end{equation}
Note that in this case, the left-hand side of Eq. \eqref{eq:M_HP_beta} can be given by,
\[
\begin{matrix}
\left(\begin{smallmatrix}-I_2&&0_{2\times 1}\\~\\0_{1\times 2}&&1\end{smallmatrix}\right)
R_{F_j}+{R_{F_j}}^*
\left(\begin{smallmatrix}-I_2&&0_{2\times 1}\\~\\0_{1\times 2}&&1\end{smallmatrix}\right)
&=&
\left(\begin{smallmatrix}\frac{2}{5}{{\alpha}_j}^2&&
\frac{4}{11}{\alpha}_j{\delta}_j
&&0
\\~\\
\frac{4}{11}{\alpha}_j{\delta}_j&&{{\delta}_j}^2&&0\\~\\0&&0&&2d_j\end{smallmatrix}\right)
\\~\\~&=&
\underbrace{
\left(\begin{smallmatrix}\sqrt{\frac{2}{5}}{\alpha}_j\\~\\ \frac{2\sqrt{10}}{11}{\delta}_j\\~\\0
\end{smallmatrix}\right)
\left(\begin{smallmatrix}\sqrt{\frac{2}{5}}{\alpha}_j\\~\\ \frac{2\sqrt{10}}{11}{\delta}_j\\~\\0
\end{smallmatrix}\right)^*
+
\left(\begin{smallmatrix}0&&0&&0\\~\\0&&(\frac{9{\delta}_j}{11})^2&&0\\~\\0&&0&&d_j
\end{smallmatrix}\right)
}_{\in\mathbf{P}_3},
\end{matrix}
\]
so it is indeed positive definite. Thus, there always exists
$\begin{smallmatrix}{\color{blue}{\beta}_j}\end{smallmatrix}>0$, satisfying. Eq.
\eqref{eq:M_HP_beta}. Namely 
$F_1\in\mathcal{HP}_{\color{blue}{\beta}_1}$, $\ldots$,
$F_k\in\mathcal{HP}_{\color{blue}{\beta}_k}$,
and for $F_0(s)$ in Eq. \eqref{eq:F0} one can take \mbox{$\begin{smallmatrix}{\color{blue}{\beta}_0}
\end{smallmatrix}=\min(\begin{smallmatrix}{\color{blue}{\beta}_1}\end{smallmatrix},~\ldots~,~
\begin{smallmatrix}{\color{blue}{\beta}_k}\end{smallmatrix})$.}
\smallskip

To (indirectly) obtain $R_{\hat{F}_0}$, the reduced order approximation of $R_{F_0}$, we resort to
item (ii) of Corollary \ref{Cy:Balanced_Trunction_HP_beta} and recall that
\mbox{$R_{\hat{F}_0}=\sum\limits_{j=1}^k\begin{smallmatrix}{\theta}_j\end{smallmatrix}R_{\hat{F}_j}$},
where $\begin{smallmatrix}{\theta}_1\end{smallmatrix}$, $\ldots$,
$\begin{smallmatrix}{\theta}_k\end{smallmatrix}$ are as in Eq. \eqref{eq:F0}. Moreover, note that the
realizations $R_{F_1}$, $\ldots$, $R_{F_k}$ in Eq. \eqref{eq:Example_Realization} are all balanced:
They all simultaneously satisfy Eq. \eqref{eq:Patitioned_Gramians} with $\nu=1$ so that
$\Sigma=\left(\begin{smallmatrix}10&&0
\\~\\0&&1\end{smallmatrix}\right)$,
namely they all share the same Hankel singular values, 10 and 1. Thus, one can simply take
\begin{equation}\label{eq:Example_approximations}
R_{\hat{f}_j}=\left({\footnotesize\begin{array}{c|c}
-\begin{smallmatrix}\frac{1}{5}{{\alpha}_j}^2\end{smallmatrix}&
\begin{smallmatrix}2{\alpha}_j\end{smallmatrix}
\\
\hline
\begin{smallmatrix}2{\alpha}_j\end{smallmatrix}
&\begin{smallmatrix}d_j\end{smallmatrix}
\end{array}}\right)
\quad\quad
\begin{matrix}
 j=1,~\ldots~,~k\\~\\
0\not={\alpha}_j\in\R,~d_j>0~~{\rm parameters}.
\end{matrix}
\end{equation}
as a realization of a truncated approximation
of $F_j(s)$,
i.e. $\hat{f}_j(s)=\frac{4{{\alpha}_j}^2}{s+\frac{1}{5}{{\alpha}_j}^2}+d_j$.
\smallskip

Finally, as already mentioned, using Eq. \eqref{eq:Example_approximations}, one has that,
\mbox{$R_{\hat{f}_0}=\sum\limits_{j=1}^k\begin{smallmatrix}{\theta}_j\end{smallmatrix}
R_{\hat{f}_j}$,} is a balanced realization of
\mbox{$\hat{f}_0(s)\in\mathcal{HP}_{\color{blue}{\beta}_0}$,}
an approximating function of degree one.
}
$\T$
\end{Ex}

\end{document}